\newtheorem{theorem}{Theorem}[section]
\newtheorem{lemma}[theorem]{Lemma}
\newtheorem{prop}[theorem]{Proposition}
\newtheorem{coro}[theorem]{Corollary}
\newtheorem{remark}[theorem]{Remark}
\newtheorem{question}[theorem]{Question}
\title[Relatively free algebras of Lie nilpotent associative algebras]{Identities of relatively free algebras of Lie nilpotent associative algebras}
\author{Elitza Hristova}
\address{Institute of Mathematics and Informatics,
Bulgarian Academy of Sciences,
Acad. G. Bonchev Str., Block 8,
1113 Sofia, Bulgaria}
\email{e.hristova@math.bas.bg}
\author{Thiago Castilho de Mello}
\address{Instituto de Ciência e Tecnologia, Universidade Federal de São Paulo, Av. Cesare Monsuetto Giulio Lattes, 1201, S\~ao Jos\'e dos Campos, SP, Brazil}
\email{tcmello@unifesp.br}
\date{\today}
\subjclass{16R10}
\keywords{Polynomial identity, T-ideal, Relatively free algebra, Lie-nilpotent associative algebra}
\begin{document}

\begin{abstract}
    In this paper, we consider the relatively free algebra of rank $n$, $F_n(\mathfrak{N}_p)$, in the variety of Lie nilpotent associative algebras of index $p$, denoted by $\mathfrak{N}_p$, over a field of characteristic zero. We describe an explicit minimal basis for the polynomial identities of $F_n(\mathfrak{N}_p)$ when $p=3$ and $p=4$, for all $n$, except for $F_3(\mathfrak{N}_4)$.
    In the general case, we exhibit a lower and an upper bound for the minimal $k$ such that $[x_1,x_2]\cdots[x_{2k-1},x_{2k}]$ is an identity for $F_n(\mathfrak{N}_p)$ for all $n$ and for all $p$.
\end{abstract}

\maketitle

\section{Introduction}
Let $K$ be a field of characteristic zero. In this paper, the word algebra means an associative unitary $K$-algebra. Let $X = \{x_1,x_2, \dots\}$ be a countable set. We denote by $K\langle X \rangle$ the free $K$-algebra, freely generated by $X$. The elements of $K\langle X \rangle$ are polynomials in the non-commuting variables of the set $X$. If $A$ is a $K$-algebra, we say that a polynomial $f = f(x_1,\dots,x_m)\in K\langle X\rangle$ is an identity for $A$ if $f(a_1, \dots, a_m) = 0$, for any $a_i\in A$. If $A$ satisfies a polynomial identity, then $A$ is said to be an {\it algebra with polynomial identity}, or simply a {\it PI-algebra}. We refer the reader to
\cite{D1999, GZ2005} for the basics of PI-algebras. 

An important type of associative algebras are the so-called \emph{Lie nilpotent} associative algebras. They are defined as follows: An algebra $A$ is Lie nilpotent of index at most $p$ if it satisfies the identity $[x_1,x_2,\dots,x_{p+1}] = 0$. Here, $[x_1,x_2,\dots, x_{p+1}]$ is defined inductively by $[x_1,x_2] = x_1x_2-x_2x_1$ (the usual commutator of $x_1$ and $x_2$) and if $p\geq 2$, $[x_1,\dots, x_{p+1}] = [[x_1,\dots,x_{p}],x_{p+1}]$. The class of all algebras which satisfy the identity $[x_1,x_2,\dots,x_{p+1}] = 0$ is called the variety of Lie nilpotent associative algebras of index $p$. Following the tradition, we denote it by $\mathfrak{N}_p$.
For example, the variety $\mathfrak{N}_1$ is the variety of commutative algebras, while the variety $\mathfrak{N}_2$ is the variety of algebras satisfying the identities of the infinite dimensional Grassmann algebra. 

We denote by $I_p$ the two-sided associative ideal in $K\langle X \rangle$ generated by all commutators of length $p$. In the language of PI-algebras, $I_p$ is the {\it T-ideal} generated by $[x_1, \dots, x_p]$ (see Section \ref{sec_Prelim} for the definition of T-ideal and all other necessary notions from the theory of PI-algebras).

The varieties $\mathfrak{N}_p$ and the associated ideals $I_p$ have been studied by many authors. We refer the reader to the papers \cite{L1965, Volichenko_preprint,P2019, Jennings1942,Stoyanova1982(M5), H2023, Gordienko2007, DK2017, DK2019} in which different aspects of these varieties were considered. An interesting aspect, which will be quite useful in this paper, is given by the following theorem due to \cite{L1965, GL1983}.
\begin{theorem} \label{thm_Latyshev} \cite{L1965, GL1983}
    For any $p_1, p_2$, $I_{p_1}I_{p_2}\subset I_{p_1+p_2-2}$.
\end{theorem}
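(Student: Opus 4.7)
The plan is to exploit the T-ideal structure of $I_{p_1+p_2-2}$: substituting a product of variables into its defining commutator and expanding via the Leibniz rule yields an identity relating products of commutators of various lengths, from which the desired inclusion can be extracted by induction. By the T-ideal reduction (every element of $I_p$ is a sum of elements of the form $u [f_1, \ldots, f_p] v$ for $f_i \in K\langle X \rangle$, and substitution $x_i \mapsto f_i$ preserves $I_{p_1+p_2-2}$), it suffices to prove the polynomial identity $[x_1, \ldots, x_{p_1}] \cdot [y_1, \ldots, y_{p_2}] \in I_{p_1+p_2-2}$ on variables.

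The starting point is that, since $I_{p_1+p_2-1} \subseteq I_{p_1+p_2-2}$ and $I_{p_1+p_2-1}$ is a T-ideal,
\begin{equation*}
  [x_1 y_1,\, x_2,\, x_3,\, \ldots,\, x_{p_1+p_2-1}] \in I_{p_1+p_2-2}.
\end{equation*}
Iterated application of the Leibniz rule $[uv, w] = u[v, w] + [u, w]v$ expands the left-hand side into the sum
\begin{equation*}
  \sum_{A \subseteq \{2, \ldots, p_1+p_2-1\}} [x_1, x_A] \cdot [y_1, x_{A^c}],
\end{equation*}
where $x_A$ denotes the subsequence indexed by $A$ in increasing order, with $[x_1, \emptyset] = x_1$ and $[y_1, \emptyset] = y_1$. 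The summands with $|A| = p_1 - 1$ are products of commutators of the target lengths $(p_1, p_2)$; the others have length pairs $(q_1, q_2)$ with $q_1 + q_2 = p_1 + p_2$.

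The proof then proceeds by induction, with base case $\min(p_1, p_2) = 2$ (where $I_{p_1} I_2 \subseteq I_{p_1}$ is immediate). In the inductive step, the above identity, together with analogous identities obtained by permuting the tail variables $x_2, \ldots, x_{p_1+p_2-1}$ among the two sides, forms a linear system whose solution expresses each target product $[x_1, x_A][y_1, x_{A^c}]$ with $|A| = p_1 - 1$ as a combination of (a) substituted long commutators on the left, already in $I_{p_1+p_2-2}$; and (b) non-target summands with length pairs $(q_1, q_2) \neq (p_1, p_2)$, which are handled by an appropriately formulated inductive hypothesis.

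The main obstacle is combinatorial and inductive: the non-target summands have length pairs $(q_1, q_2)$ with the \emph{same} sum $p_1 + p_2$ as the target, so plain induction on $p_1 + p_2$ is insufficient. A refined induction---on $\min(p_1, p_2)$, or on a lexicographic invariant of $(p_1, p_2)$---is required, and one must verify that every non-target pair is strictly smaller in the chosen order. Additional identities from higher substitutions (such as $[x_1 y_1 z_1, \ldots]$, producing products of three commutators) may be needed to isolate individual target products; this combinatorial inversion, together with the bookkeeping needed to close the induction without circularity, is the technical crux of the argument.
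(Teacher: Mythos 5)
First, note that the paper itself does not prove this statement: it is quoted from Latyshev and Gupta--Levin, so your attempt must be judged on its own merits. As written it has a genuine gap, located exactly where you yourself place ``the technical crux.'' The Leibniz expansion of $[x_1y_1,x_2,\dots,x_{p_1+p_2-1}]$ is correct and relevant (it is essentially Lemma~\ref{CommutatorOfMonomials} of the paper), but it yields only the single relation $\sum_{q_1+q_2=p_1+p_2} S_{q_1,q_2}\in I_{p_1+p_2-1}$, where $S_{q_1,q_2}$ denotes the sum over all subsets $A$ with $|A|=q_1-1$. Permuting the tail variables does not decouple the blocks $S_{q_1,q_2}$: every permuted identity again mixes all length pairs with the same sum, so you get no new information separating $(p_1,p_2)$ from the other pairs. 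For the induction to close, every non-target pair $(q_1,q_2)$ with $q_1+q_2=p_1+p_2$ would have to precede $(p_1,p_2)$ in your chosen well-order, and this fails for both orders you propose: for $(p_1,p_2)=(3,5)$ the non-target pairs include $(4,4)$, with larger minimum, and $(2,6)$, with smaller minimum, so neither an ascending nor a descending induction on $\min(p_1,p_2)$ (nor a lexicographic refinement) dominates all of them. Even restricted to the target block, the relation controls only the sum of $\binom{p_1+p_2-2}{p_1-1}$ products, and the ``combinatorial inversion'' needed to extract a single product $[x_1,\dots,x_{p_1}][y_1,\dots,y_{p_2}]$ is precisely what is missing; the partial-symmetrization operators involved have nontrivial kernels in general, so the linear system is not invertible without injecting further identities.

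Two further points. The reduction to generators is also incomplete: a general element of $I_{p_1}I_{p_2}$ is a sum of terms $u[f_1,\dots,f_{p_1}]v\,u'[g_1,\dots,g_{p_2}]v'$, and eliminating the factor $vu'$ between the two commutators requires moving it past $[f_1,\dots,f_{p_1}]$ at the cost of terms in $I_{p_1+1}I_{p_2}$; one must then invoke the statement for $(p_1+1,p_2)$ and check that this ascending-weight recursion terminates (it does, by multihomogeneity, since $I_k$ contains no elements of degree less than $k$), none of which appears in your write-up. Finally, the proofs in the literature do not attempt to invert the Leibniz system: they reduce to products $[c,x]\,w\,[d,y]$ with $c\in L_{p_1-1}$, $d\in L_{p_2-1}$, and combine the Leibniz rule with the Lie-theoretic input $[L_{i},L_{j}]\subseteq L_{i+j}$ (Theorem~\ref{thm_products_of_commutators}(i) of the paper), which lets one commute the Lie elements themselves at the cost of commutators of controlled higher weight; it is this extra ingredient that breaks the symmetry among the pairs $(q_1,q_2)$ and makes an induction possible. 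Without something of that kind, your induction cannot be closed.
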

The above can be improved if at least one of $p_1$ or $p_2$ is odd \cite{BJ2013, DK2019}:
\begin{theorem} \label{thm_Bapat_Jordan} \cite{BJ2013, DK2019}
    If at least one of $p_1$ or $p_2$ is odd, $I_{p_1}I_{p_2}\subset I_{p_1+p_2-1}$.
\end{theorem}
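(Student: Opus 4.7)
The plan is to bootstrap from Theorem~\ref{thm_Latyshev}. Since $I_{p_1}I_{p_2}\subset I_{p_1+p_2-2}$ is already known, it suffices to show that the image of $I_{p_1}I_{p_2}$ in $K\langle X\rangle/I_{p_1+p_2-1}$ is zero. A typical generator of $I_{p_1}I_{p_2}$ has the shape $u\,[a_1,\ldots,a_{p_1}]\,w\,[b_1,\ldots,b_{p_2}]\,v$, but the identity $[a_1,\ldots,a_{p_1}]\cdot w=[a_1,\ldots,a_{p_1-1},a_{p_1}w]-a_{p_1}[a_1,\ldots,a_{p_1-1},w]$ (a consequence of the Leibniz rule) allows us to absorb the middle factor $w$ into a commutator of length $p_1$, and the T-ideal property of $I_{p_1+p_2-1}$ disposes of the outer $u$ and $v$. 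We are thus reduced to proving the cleaner inclusion
\[
[a_1,\ldots,a_{p_1}]\cdot[b_1,\ldots,b_{p_2}]\in I_{p_1+p_2-1}
\]
for arbitrary $a_i,b_j\in K\langle X\rangle$, and without loss of generality we may assume $p_1$ is odd.

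Next, working modulo $I_{p_1+p_2-1}$, I would apply the Leibniz rule iteratively on the second factor (equivalently, use that $\operatorname{ad}([a_1,\ldots,a_{p_1}])$ is a derivation of $K\langle X\rangle$). This rewrites $[a_1,\ldots,a_{p_1}]\cdot[b_1,\ldots,b_{p_2}]$ as a single long commutator of total length $p_1+p_2-1$ (manifestly in $I_{p_1+p_2-1}$) plus correction terms that are products of strictly shorter commutators. To push each correction into $I_{p_1+p_2-1}$, one invokes the Jacobi identity $[[x,y],z]+[[y,z],x]+[[z,x],y]=0$, whose iterates furnish antisymmetry relations peculiar to odd-length commutators; combining these with Theorem~\ref{thm_Latyshev} applied to the shorter commutators arising in the corrections forces them into $I_{p_1+p_2-1}$ as well.

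The main obstacle is producing the explicit combinatorial identity that realizes this cancellation, because the parity assumption is genuinely indispensable. For $p_1=p_2=2$ the inclusion fails: the infinite-dimensional Grassmann algebra $E$ lies in $\mathfrak{N}_2$, so every element of $I_3$ vanishes on $E$, yet $[x_1,x_2][x_3,x_4]$ does not vanish on $E$, which puts this product outside $I_3$. Any valid argument must therefore distinguish odd from even length, and in practice this is achieved either by a direct multilinear bookkeeping via Specht modules (the route of Bapat--Jordan) or by transferring the question to the lower central series $L^p$ of the free Lie algebra inside its universal enveloping algebra, where the parity-dependent behaviour of $L^{p_1}\cdot L^{p_2}$ is more transparent. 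I expect the most delicate step to be managing the combinatorial explosion of correction terms produced by the iterated Leibniz expansion and verifying that each such term can indeed be pushed one length deeper using only the odd-parity Jacobi relations combined with Theorem~\ref{thm_Latyshev}.
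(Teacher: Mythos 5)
First, note that the paper does not prove this statement at all: it is quoted from \cite{BJ2013, DK2019}, and the key ingredient behind it is recorded (again with citation only) as Theorem \ref{thm_products_of_commutators}~(iii), namely $[L_{p_1}, I_{p_2}] \subset L_{p_1+p_2}$ for odd $p_2$. So there is no in-paper argument to compare yours against; the question is whether your sketch stands on its own.

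It does not, because the one step that carries all the content is asserted rather than proved. Your first reduction is fine: writing a generator of $I_{p_1}I_{p_2}$ as $u\,c_1\,w\,c_2\,v$ with $c_i$ commutators, the identity $c_1 w = w c_1 + [c_1,w]$ together with Theorem \ref{thm_Latyshev} applied to $[c_1,w]c_2 \in I_{p_1+1}I_{p_2}$ reduces everything to showing $c_1 c_2 \in I_{p_1+p_2-1}$, and the same trick justifies your ``WLOG $p_1$ odd''. The problem is the second paragraph. The claim that the Leibniz expansion produces a single commutator of length $p_1+p_2-1$ plus corrections that ``can be pushed one length deeper using only the odd-parity Jacobi relations combined with Theorem~\ref{thm_Latyshev}'' is exactly the theorem to be proved, restated. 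Theorem \ref{thm_Latyshev} applied to any product of shorter commutators always loses one degree ($I_aI_b \subset I_{a+b-2}$), so it can never by itself recover the missing degree; the recovery must come from an explicit parity-dependent commutator identity, and you never exhibit it. In the standard treatments this identity is packaged as the lemma $[L_{p_1}, I_{p_2}] \subset L_{p_1+p_2}$ for odd $p_2$ (from which the theorem follows quickly: writing $c_1 = [c_1'',a]$ with $c_1'' \in L_{p_1-1}$, one has $c_1 c_2 = [c_1'', a c_2] - a[c_1'', c_2] \in L_{p_1+p_2-1} + K\langle X\rangle L_{p_1+p_2-1}$ when $p_2$ is odd), and proving that lemma is the genuinely hard combinatorial work of \cite{BJ2013} and \cite{DK2019}. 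Your third paragraph essentially concedes this by naming it ``the main obstacle'' and pointing to the literature for how it is overcome. Your Grassmann-algebra counterexample for $p_1=p_2=2$ is correct and matches the paper's remark, but correctly diagnosing where parity must enter is not the same as supplying the identity that exploits it; as written, the proposal is a plausible plan with its central step missing.
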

Theorem \ref{thm_Bapat_Jordan} is clearly not true if both $p_1$ and $p_2$ are even, since $[x_1,x_2][x_3,x_4]$ is not an identity for the infinite dimensional Grassmann algebra, which satisfies $[x_1,x_2,x_3] = 0$.

Although Theorem \ref{thm_Bapat_Jordan} is not true in general, if we restrict ourselves to polynomials of at most three variables, it is true for any $p_1$ and $p_2$, as proved by Pchelintsev in \cite{P2019}:
\begin{theorem} \label{thm_Pchelintsev} \cite{P2019}
    For any $p_1$ and $p_2$, $I_{p_1}I_{p_2}\cap K\langle x_1,x_2,x_3\rangle \subset I_{p_1+p_2-1}\cap K\langle x_1,x_2,x_3\rangle$.
\end{theorem}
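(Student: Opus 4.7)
The statement splits by the parities of $p_1$ and $p_2$: by Theorem~\ref{thm_Bapat_Jordan}, if at least one of them is odd the stronger conclusion $I_{p_1}I_{p_2}\subset I_{p_1+p_2-1}$ already holds without any restriction on the number of variables. The substantive case is therefore when $p_1$ and $p_2$ are both even, and the three-variable hypothesis is essential: the element $[x_1,x_2][x_3,x_4]\in I_2 I_2$ fails to lie in $I_3$, since it is not an identity of the infinite Grassmann algebra $E$.

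I would prove the even-even case by induction on $p_1+p_2$, with the base case $p_1=p_2=2$ carrying the decisive content. Using the Leibniz rule $[uv,w]=u[v,w]+[u,w]v$ together with the centrality of $[x_i,x_j]$ modulo $I_3$, every element of $I_2\cap K\langle x_1,x_2,x_3\rangle$ can be rewritten modulo $I_3$ as a $K\langle x_1,x_2,x_3\rangle$-linear combination $\sum f_{ij}[x_i,x_j]$ with $i,j\in\{1,2,3\}$. A product of two such elements then reduces modulo $I_3$ to a sum of terms $f_{ij}g_{kl}\,[x_i,x_j][x_k,x_l]$ with all four indices in $\{1,2,3\}$; by pigeonhole at least one index repeats. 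Evaluating in $E$ yields $[a,b][a,c]=4a_1 b_1 a_1 c_1 = -4a_1^2 b_1 c_1 = 0$, using the identity $a_1^2=0$ for any odd element $a_1\in E$, and the other pigeonhole configurations vanish by the same mechanism. Since $T(E)=I_3$, each $[x_i,x_j][x_k,x_l]$ with indices in $\{1,2,3\}$ actually lies in $I_3$, completing the base case.

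For the inductive step with $p_1,p_2$ even and $p_1+p_2>4$, I would peel off a factor by writing $[y_1,\ldots,y_{p_1}]=[y_1,\ldots,y_{p_1-1}]\,y_{p_1}-y_{p_1}\,[y_1,\ldots,y_{p_1-1}]$ and regrouping the product against $[z_1,\ldots,z_{p_2}]$. Since $p_1-1$ is odd, Theorem~\ref{thm_Bapat_Jordan} applied to the cross bracket $[y_{p_1},[z_1,\ldots,z_{p_2}]]\in I_{p_2+1}$ places one summand into $I_{(p_1-1)+(p_2+1)-1}=I_{p_1+p_2-1}$. The leftover term is $[A,y_{p_1}]$ with $A=[y_1,\ldots,y_{p_1-1}][z_1,\ldots,z_{p_2}]\in I_{p_1+p_2-2}$ (by Theorem~\ref{thm_Latyshev}); expanding $A$ analogously and iterating, one reduces to products of shorter commutators in three variables that fall under the inductive hypothesis, together with residues controlled by the base case. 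The main obstacle lies in the base case: although membership of $[x_i,x_j][x_k,x_l]$ in $I_3$ is transparent via the Grassmann evaluation, producing a concrete symbolic expression that the inductive machinery can carry through---ensuring every rearrangement error stays within $I_{p_1+p_2-1}$ and does not escape into the weaker $I_{p_1+p_2-2}$ afforded by Theorem~\ref{thm_Latyshev} alone---is the delicate technical heart of the argument.
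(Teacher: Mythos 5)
The paper does not prove this statement at all: it is quoted verbatim from Pchelintsev \cite{P2019}, so there is no internal proof to compare against, and any correct argument you gave would necessarily be ``a different route.'' Your reduction to the even--even case via Theorem~\ref{thm_Bapat_Jordan} is correct, and your base case $p_1=p_2=2$ is essentially right: after the retraction $x_i\mapsto 0$ for $i>3$ one reduces to terms $u[m_1,m_2]v[m_3,m_4]w$ in three variables, centrality of $I_2$ modulo $I_3$ reduces these to $f\,[x_i,x_j][x_k,x_l]$ with $i,j,k,l\in\{1,2,3\}$, and the Grassmann evaluation together with $\mathrm{Id}(E)=I_3$ finishes it. (One small point: an element of $I_2I_2\cap K\langle x_1,x_2,x_3\rangle$ is not a priori a product of two elements of $I_2\cap K\langle x_1,x_2,x_3\rangle$; you need the T-ideal retraction argument to justify working entirely in three variables.)

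The genuine gap is in the inductive step, and you have mislocated the difficulty: the base case is the easy part, and the induction as sketched does not close. Writing $c=[y_1,\dots,y_{p_1-1}]$, $y=y_{p_1}$, $d=[z_1,\dots,z_{p_2}]$, your peeling gives $[c,y]d=[cd,y]+c[y,d]$, and indeed $c[y,d]\in I_{p_1-1}I_{p_2+1}\subset I_{p_1+p_2-1}$. But the leftover term $[cd,y]$ with $cd\in I_{p_1+p_2-2}$ is not controlled: $[I_q,z]\not\subset I_{q+1}$ for $q$ even, because $I_q=K\langle X\rangle L_q$ and the Leibniz expansion of $[u c' v, y]$ (with $c'\in L_q$) produces the terms $[u,y]c'v$ and $uc'[v,y]$, which lie only in $I_2I_q$ and $I_qI_2$. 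With $q=p_1+p_2-2$ these are even--even products whose index sum is again $p_1+p_2$, so they are \emph{not} covered by an induction on $p_1+p_2$, they are not ``products of shorter commutators,'' and they are not controlled by the $I_2I_2$ base case. In effect your step reduces the general even--even case to the family $I_2I_q\cap K\langle x_1,x_2,x_3\rangle\subset I_{q+1}$ for all even $q$, which is essentially the full strength of the theorem and requires the detailed commutator calculus of Pchelintsev's paper; the formal Leibniz/Jacobi manipulations available from Theorems~\ref{thm_Latyshev} and~\ref{thm_Bapat_Jordan} alone do not yield it.
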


Notice that, translated in the language of polynomial identities, Theorem \ref{thm_Latyshev} means that $[x_1,\dots,x_{p_1}][x_{p_1+1}, \dots, x_{p_1+p_2}]$ 
is a consequence of $[x_1,\dots, x_{p_1+p_2-2}]$, 
or equivalently, that it is a polynomial identity for the relatively free algebra $F(\mathfrak{N}_{p_1+p_2-3}) = \frac{K\langle X \rangle}{I_{p_1 + p_2 - 2}}$. 

On the other hand, the above counterexample and Theorem \ref{thm_Pchelintsev} show us that $[x_1,x_2][x_3,x_4]$ is not a polynomial identity for $F(\mathfrak{N}_2)$, but is a polynomial identity for the relatively free algebra of rank $3$, $F_3(\mathfrak{N}_2) = \frac{K\langle x_1, x_2, x_3\rangle}{K\langle x_1, x_2, x_3\rangle\cap I_3} $.

Thus, a natural question arises in this situation.

For a given $p$, and a given $n$, what are the polynomial identities of $F_n(\mathfrak{N}_p)$?

The structure and identities of relatively free algebras for certain varieties have been explored in earlier works, see e.g., \cite{Procesi1973, Berele1993, KoshlukovMello2013a, KoshlukovMello2013b, GdeM2015,MelloYasumura2021}.

It is clear that if $\mathfrak{V}$ is a variety of algebras, the algebra $F(\mathfrak{V})$ satisfies exactly the same identities as $\mathfrak{V}$, but for relatively free algebras of finite rank $n$, we can only guarantee $\mathrm{Id}(F_n(\mathfrak{V}))\supseteq \mathrm{Id}(\mathfrak{V})$. If there exists some $n$ such that equality holds, we say that $\mathfrak{V}$ has a finite basic rank. Otherwise, $\mathfrak{V}$ has infinite basic rank.
The varieties of infinite basic rank are the ideal setting to study this kind of problems. In such varieties, we have an infinite number of proper inclusions in the following chain of T-ideals
\[K\langle X \rangle \supseteq \mathrm{Id}(F_1(\mathfrak{V}))\supseteq \cdots \supseteq \cdots \mathrm{Id}(F_n(\mathfrak{V}))\supseteq \mathrm{Id}(F_{n+1}(\mathfrak{V})) \supseteq\cdots \]

In this paper, we study identities of the relatively free algebras of finite rank of varieties of Lie nilpotent algebras. The case of $\mathfrak{N}_2$ is well known, since it is the variety generated by the infinite dimensional Grassmann algebra $E$. The identities of $F_{2n}(\mathfrak{N}_2)$ and of $F_{2n+1}(\mathfrak{N}_2)$ are given by 
\[
\mathrm{Id}(F_{2n}(\mathfrak{N}_2)) = \mathrm{Id}(F_{2n+1}(\mathfrak{N}_2)) = \langle [x_1,x_2,x_3], [x_1,x_2]\cdots [x_{2n+1},x_{2n+2}]\rangle^T.
\]
Also, it is interesting to note that $\mathrm{Id}(F_{k} (\mathfrak{N}_2)) = \mathrm{Id}(E_{k})$, where $E_k$ denotes the Grassmann algebra of a $k$-dimensional vector space.

For $\mathfrak{N}_3$, we describe a finite set of generators for $\mathrm{Id}(F_n(\mathfrak{N}_3))$ for every $n$. In addition, for each $n$, we determine the minimal degree of a standard identity satisfied by $F_n(\mathfrak{N}_3)$. 
For $\mathfrak{N}_4$, we describe a minimal set of generators for $\mathrm{Id}(F_n(\mathfrak{N}_4))$ for every $n\neq 3$.

The varieties $\mathfrak{N}_p$ are non-matrix varieties. This means that $M_2(K)\not \in \mathfrak{N}_p$. A consequence of this fact is that every finitely generated algebra in $\mathfrak{N}_p$ satisfies an identity of the type $[x_1,x_2]\cdots[x_{2k-1},x_{2k}]$ for some $k$ (see \cite{Cekanu1980,MischenkoPetrogradskyRegev2011}). 
Another way to derive this fact is from a result of Jennings \cite{Jennings1942}, where the author shows that if $A$ is a finitely generated algebra which is Lie nilpotent, then the ideal $C(A)$, generated by commutators, is nilpotent. In particular, since $F_n(\mathfrak{N}_p)$ is Lie nilpotent and finitely generated,  $C(F_n(\mathfrak{N}_p))$ is a nilpotent ideal, which means that it satisfies an identity of the type $[x_1,x_2]\cdots[x_{2k-1},x_{2k}]$ for some $k$.
In this paper, we find for any $p$ and for any $n$, a lower and an upper bound for  the minimal $k$ such that $[x_1,x_2]\cdots[x_{2k-1},x_{2k}]$ is an identity for $F_n(\mathfrak{N}_p)$. 

In the last section of the paper, we briefly discuss the asymptotic equivalence of the varieties $\mathfrak{N}_p$.

\section{Preliminaries} \label{sec_Prelim}

In this section, we recall the basic definitions and properties of PI-algebras and commutator ideals, which we will need in the sequel. As before, $K \left\langle X \right \rangle$ denotes the free associative algebra generated by the countable set $X = \{x_1, x_2, \dots, x_n, \dots\}$. A {\it T-ideal} in $K \left\langle X \right \rangle$ is any ideal that is closed under all $K$-algebra endomorphisms of $K \left\langle X \right \rangle$. If $A$ is a PI-algebra, then the set of polynomial identities of $A$ is a $T$-ideal in $K \left\langle X \right \rangle$, which we denote by $\mathrm{Id}(A)$. One example is given by the ideal $I_p$, which, as already mentioned in the Introduction, is the $T$-ideal generated by the commutator $[x_1, \dots, x_p]$. For a given set $S\subseteq K\langle X \rangle$, we denote by $\langle S\rangle ^T$ the smallest $T$-ideal of $K\langle X \rangle$ containing $S$. If $S=\{f_1, \dots, f_n\}$, we denote it simply by $\langle f_1, \dots, f_n\rangle^T$.

The class of all algebras that satisfy a certain set of polynomial identities $\mathcal{F}$ is called the variety of algebras defined by $\mathcal{F}$ and is denoted by $\mathrm{var}(\mathcal{F})$. Conversely, if $\mathfrak{V}$ is a variety of PI-algebras, then by $\mathrm{Id}(\mathfrak{V})$ we denote the set of identities satisfied by all algebras in $\mathfrak{V}$, which is naturally a $T$-ideal. 

Let $\mathfrak{V}$ be a variety of PI-algebras and let $I = \mathrm{Id}(\mathfrak{V})$ be the T-ideal of identities of $\mathfrak{V}$. The quotient algebra $F(\mathfrak{V}) = \frac{K \left\langle X \right \rangle}{I}$ is called the {\it relatively free algebra} in the variety $\mathfrak{V}$. One can also define the relatively free algebra of finite rank as follows. If $n$ is a positive integer, then the relatively free algebra of rank $n$ in the variety $\mathfrak{V}$ is defined as
\[
F_n(\mathfrak{V}) = \frac{K \left\langle x_1, \dots, x_n \right \rangle} {I \cap K \left\langle x_1, \dots, x_n \right \rangle}.
\]
In particular, $F_n(\mathfrak{N}_p) =\frac{K \left\langle x_1, \dots, x_n \right \rangle}  {I_{p+1}\cap  K \left\langle x_1, \dots, x_n \right \rangle}$ is the relatively free algebra of rank $n$ in the variety $\mathfrak{N}_p$. 

We also define by $L_p$ the Lie ideal in $K \left\langle X \right \rangle$ generated by all commutators of length $p$. Then, it holds that $I_p = K \left\langle X \right \rangle \cdot L_p$. 

The following theorem summarizes several known properties of the commutator ideals $L_p$ and $I_p$ which will be very useful in what follows. We are only interested in fields of characteristic zero and that is why we state all results for $K$ being of characteristic zero. For more general fields see \cite{DK2019}. Statements (ii), (iv), and (vi) from the theorem below, appear also as Theorems \ref{thm_Latyshev}, \ref{thm_Bapat_Jordan}, and \ref{thm_Pchelintsev} in the Introduction. 

\begin{theorem} \label{thm_products_of_commutators} Let $p_1$ and $p_2$ be positive integers greater than $1$. The following properties hold:
\begin{itemize}
\item [(i)] $[L_{p_1}, L_{p_2}] \subset L_{p_1 + p_2}$;
\item [(ii)] (\cite{L1965}, \cite{GL1983})
\[
I_{p_1} I_{p_2} \subset I_{p_1 + p_2 - 2};
\]
\item [(iii)] (\cite{BJ2013}) If $p_2$ is odd then
\[
[L_{p_1}, I_{p_2}] \subset L_{p_1 + p_2}
\]
\item [(iv)] (\cite{BJ2013}, \cite{DK2019}) Whenever $p_1$ or $p_2$ is odd then
\[
I_{p_1} I_{p_2} \subset I_{p_1 + p_2 - 1};
\]
\item [(v)] (\cite{H2023}) Let $p_1$ and $p_2$ be even and let $c_1 \in L_{p_1-1}$ and $c_2 \in L_{p_2 -1}$. Then, for any $z \in K\left\langle X \right\rangle$
\[
[c_1, z] [c_2, z] \in I_{p_1 + p_2 -1}.
\]
\item [(vi)] (\cite{P2019}) Consider the free associative algebra $K\left\langle x_1, x_2, x_3 \right\rangle$ generated by three elements. Then, for any $p_1$ and $p_2$
\[
I_{p_1} I_{p_2} \cap K\left\langle x_1, x_2, x_3 \right\rangle \subset I_{p_1 + p_2 - 1} \cap K\left\langle x_1, x_2, x_3 \right\rangle.
\]
\end{itemize}
\end{theorem}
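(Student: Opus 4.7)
The statement collects six claims attributed to separate sources, so the plan is correspondingly modular. First I would dispatch (i), which is internal to the Lie structure of $K\langle X\rangle$: by the Jacobi identity $[[u,v],w] = [[u,w],v] + [u,[v,w]]$ one can push brackets inward, reducing the check to pairs of generators, where the bracket of iterated commutators of lengths $p_1$ and $p_2$ is itself a nested commutator of length $p_1+p_2$ and hence in $L_{p_1+p_2}$. A short induction on the number of Lie generators used to express $a\in L_{p_1}$ and $b\in L_{p_2}$ then yields the claim.

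For (ii), Latyshev's argument takes a product $uv$ with $u\in I_{p_1}$ and $v\in I_{p_2}$ and expands it by repeatedly applying $uv = vu + [u,v]$ together with the Leibniz rule $[a, bc] = [a,b]c + b[a,c]$; each application absorbs two ``units'' of commutator length into one nested commutator, placing the result inside $I_{p_1+p_2-2}$. Statement (iv), due to Bapat--Jordan and refined by Dangovski--Krakovski, sharpens this by exploiting an extra antisymmetry available when one of $p_1,p_2$ is odd, which kills the leading obstruction and gains one length; statement (iii) is the Lie-ideal analogue, and I would derive it first and then deduce (iv) by multiplying by an arbitrary element of $K\langle X\rangle$.

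For (v) and (vi) I would follow the arguments of \cite{H2023} and \cite{P2019}. Part (v) concerns $[c_1,z][c_2,z]$ with both $p_i$ even; after multilinearizing in $z$ and applying (iv) to the mixed polarizations, only a symmetrized residual term survives, which Hristova shows vanishes modulo $I_{p_1+p_2-1}$ via an explicit commutator identity. Part (vi) exploits the three-variable restriction: after expanding an element of $I_{p_1}I_{p_2}\cap K\langle x_1,x_2,x_3\rangle$ via the Leibniz rule, the terms not already controlled by (ii) can be shown to lie in $I_{p_1+p_2-1}$ through combinatorial identities peculiar to three letters.

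The main obstacle is (vi). The even--even parity restriction in (iv) cannot be dropped in general, as the infinite-dimensional Grassmann algebra witnesses, so any proof of (vi) must genuinely use that only three variables are present. Reproducing Pchelintsev's combinatorial reduction, and tracking precisely which three-letter permutation identities rescue the improved bound, is where the bulk of the careful work lies; the remaining parts are either elementary (as in (i)) or follow by direct quotation of the cited results.
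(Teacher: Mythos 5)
The paper offers no proof of this theorem at all --- it is presented as a compendium of results quoted from the cited literature --- so your modular plan of deferring parts (ii)--(vi) to Latyshev, Gupta--Levin, Bapat--Jordan, Deryabina--Krasilnikov, Hristova and Pchelintsev, together with the standard Jacobi-identity induction for the unattributed part (i) (which is just the lower-central-series inclusion $[\gamma_{p_1},\gamma_{p_2}]\subseteq\gamma_{p_1+p_2}$), matches the paper's treatment. The only caveat is that your descriptive sketches of the arguments for (ii)--(iv) are too loose to stand as proofs on their own (and the attribution ``Dangovski--Krakovski'' should read Deryabina--Krasilnikov), but since the paper itself relies purely on citation here, nothing is missing relative to it.
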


The following lemma is an immediate consequence of Theorem \ref{thm_products_of_commutators} (v).

\begin{lemma} \label{lemma_even_comms}
Let $p_1$ and $p_2$ be even integers and let $c_1 \in L_{p_1-1}$ and $c_2 \in L_{p_2 -1}$. Furthermore, let $x,y \in K\left\langle X \right\rangle$. Then the following holds:
\[
[c_1, x][c_2, y] + [c_1, y][c_2, x] \in I_{p_1 + p_2 -1}.
\]
\end{lemma}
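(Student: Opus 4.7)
The plan is to deduce the lemma from Theorem \ref{thm_products_of_commutators}(v) by a standard polarization (linearization) trick. Part (v) tells us that for any single element $z \in K\langle X \rangle$ and any $c_1 \in L_{p_1-1}$, $c_2 \in L_{p_2-1}$ with $p_1, p_2$ even, the product $[c_1, z][c_2, z]$ lies in $I_{p_1+p_2-1}$. Since the expression in the lemma is precisely the mixed-variable piece that arises when one linearizes this quadratic form in $z$, it should come out directly.

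Concretely, I would substitute $z = x + y$ into the statement of (v). Using bilinearity of the commutator in its second argument and distributivity of the product, the expression $[c_1, x+y][c_2, x+y]$ expands as
\[
[c_1,x][c_2,x] + [c_1,x][c_2,y] + [c_1,y][c_2,x] + [c_1,y][c_2,y].
\]
By (v) applied to $z = x+y$, $z = x$, and $z = y$ separately, the three terms $[c_1,x+y][c_2,x+y]$, $[c_1,x][c_2,x]$, and $[c_1,y][c_2,y]$ all lie in $I_{p_1+p_2-1}$. Subtracting the two single-variable pieces from the full expansion leaves
\[
[c_1,x][c_2,y] + [c_1,y][c_2,x] \in I_{p_1+p_2-1},
\]
which is exactly the claim.

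There is essentially no obstacle here: the only thing to verify is that (v) applies to $x$, $y$, and $x+y$ with the same $c_1, c_2$, which is automatic since (v) is stated for arbitrary $z \in K\langle X \rangle$. The argument requires nothing beyond bilinearity of the commutator bracket in its right slot, which holds over any field.
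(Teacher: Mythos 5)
Your proof is correct and is exactly the argument the paper gives: the paper's one-line proof also substitutes $z = x+y$ into Theorem \ref{thm_products_of_commutators}(v) and relies on the same polarization, which you have simply written out in full.
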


\begin{proof}
We take $[c_1, x+y][c_2, x+y]$ and use Theorem \ref{thm_products_of_commutators} (v).
\end{proof}

We will also need the following auxiliary results.

\begin{lemma}\label{CommutatorOfMonomials}
In any associative algebra $A$, if $k, l, m \geq 1$, and $x_1,\dots,x_k$, $y_1,\dots,y_l$, and $z_1, \dots, z_m \in A$, we have
\begin{itemize}
\item[(i)]
    \[[x_1\cdots x_k, y_1\cdots y_l] = \sum_{i=1}^k\sum_{j=1}^l x_1\cdots x_{i-1}y_1\cdots y_{j-1}[x_i,x_j]y_{j+1}\cdots y_lx_{i+1}\cdots x_k.\]
\item[(ii)]
\begin{align*} 
 &[x_1\cdots x_k, y_1 \cdots y_l, z_1 \cdots z_m] = \sum_{i =1}^k\sum_{j = 1}^l \sum_{r=1}^m\\
 & x_1\cdots x_{i-1} y_1 \cdots y_{j-1}z_1 \cdots z_{r-1}[x_i, y_j, z_r]z_{r+1}\cdots z_m y_{j+1}\cdots y_l x_{i+1}\cdots x_k + f,
\end{align*}
where $f \in I_2I_2$.
\end{itemize}
\end{lemma}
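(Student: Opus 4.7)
The plan is to derive both formulas from the standard Leibniz rules $[ab,c] = a[b,c] + [a,c]b$ and $[a,bc] = [a,b]c + b[a,c]$, which hold in any associative algebra.

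For part (i), I would first expand $[x_1\cdots x_k, y_1\cdots y_l]$ in the left slot, either by induction on $k$ or by a direct iteration of the first Leibniz rule, to obtain
\[
[x_1\cdots x_k, y_1\cdots y_l] = \sum_{i=1}^{k} x_1\cdots x_{i-1}\, [x_i, y_1\cdots y_l]\, x_{i+1}\cdots x_k.
\]
Then I would apply the second Leibniz rule (again iterated or proved by induction on $l$) to each inner commutator $[x_i, y_1\cdots y_l]$ to get $\sum_{j=1}^l y_1\cdots y_{j-1}[x_i,y_j]y_{j+1}\cdots y_l$. Substituting back yields the double sum in the statement.

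For part (ii), I would start from the expansion in (i) and apply Leibniz to the outer commutator with $z_1\cdots z_m$. Each summand from (i) has the form $u\,[x_i,y_j]\,v$ where $u = x_1\cdots x_{i-1}y_1\cdots y_{j-1}$ and $v = y_{j+1}\cdots y_l x_{i+1}\cdots x_k$; its commutator with $z_1\cdots z_m$ splits, by iterated Leibniz, into a sum over choosing one factor in $u\,[x_i,y_j]\,v$ and one factor $z_r$ in $z_1\cdots z_m$ to form an inner commutator. When the chosen factor on the left lies in $u$ or $v$ (i.e.\ is a single $x_s$ or $y_s$), the resulting monomial still contains the untouched factor $[x_i,y_j]$ together with the new commutator $[x_s, z_r]$ (or $[y_s, z_r]$), and is therefore an element of $I_2 I_2$; these terms are collected into $f$. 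The only remaining contribution comes from choosing the factor $[x_i,y_j]$ itself, which produces $[[x_i,y_j], z_r] = [x_i,y_j,z_r]$ sitting at position $r$ among the $z$'s; summing over $i$, $j$, and $r$ gives exactly the displayed formula.

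The main obstacle is purely clerical, not conceptual: ensuring that the positions of the $x$'s, $y$'s, and $z$'s inside the reshuffled monomial are written in the exact order $x_1\cdots x_{i-1}y_1\cdots y_{j-1}z_1\cdots z_{r-1}[x_i,y_j,z_r]z_{r+1}\cdots z_m y_{j+1}\cdots y_l x_{i+1}\cdots x_k$. Since the Leibniz rule preserves the left-to-right order of the factors when it splits a product, a careful tracking of how $u$ and $v$ are interleaved with $z_1\cdots z_{r-1}$ and $z_{r+1}\cdots z_m$ around the surviving commutator suffices; no further identities are needed.
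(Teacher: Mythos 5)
Your proposal is correct and follows essentially the same route as the paper: part (i) by iterating the Leibniz rules $[ab,c]=a[b,c]+[a,c]b$ and $[a,bc]=[a,b]c+b[a,c]$, and part (ii) by applying part (i) again to the outer commutator and absorbing into $f\in I_2I_2$ every term in which the surviving inner commutator is not $[x_i,y_j]$ itself. The paper states this in one line; your version just spells out the bookkeeping (and correctly reads the $[x_i,x_j]$ in the displayed formula of (i) as the intended $[x_i,y_j]$).
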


\begin{proof}
The proof of Part (i) follows immediately from an induction argument based on the well-known fact that for any $a,b,c\in A$
    \[[ab,c] = a[b,c]+[a,c]b.\]
Then, Part (ii) follows from Part(i).    
\end{proof}

Important objects in the theory of polynomial identities are the so-called proper polynomials. They are defined as follows. We say that a commutator $[x_{i_1}, \dots, x_{i_r}] \in K\langle X \rangle$ is {\it pure} if all the entries in the commutator are elements of the set $X$. A polynomial in $K\langle X \rangle$ is called {\it proper} if it is a linear combination of products of pure commutators. One important property of proper polynomials is that all identities of an algebra with a unit follow from its proper ones (\cite{D1999}, Proposition 4.3.3). We denote by $P_n$ the vector space of multilinear polynomials of degree $n$ in  $K\langle x_1, \dots, x_n \rangle$ and by $\Gamma_n$ the subspace of $P_n$ of proper polynomials. 

If $\mathfrak{V}$ is a variety of algebras, we denote
\[
P_n(\mathfrak{V}) = \frac{P_n}{P_n \cap \mathrm{Id}(\mathfrak{V})} 
\quad \text{ and } \quad \Gamma_n(\mathfrak{V}) = \frac{\Gamma_n}{\Gamma_n \cap \mathrm{Id}(\mathfrak{V})}.
\]

To any polynomial from $K\langle X \rangle$ one can associate its complete multilinearization, which is a polynomial in $P_n$. It is well known that for an algebra over a field of characteristic zero, a polynomial is an identity if and only if its complete multilinearization is also an identity.
Hence, in the study of polynomial identities, we can restrict ourselves to proper multilinear polynomials. 

The spaces $P_n(\mathfrak{V})$ and $\Gamma_n(\mathfrak{V})$ are naturally left $S_n$-modules, where $S_n$ denotes the symmetric group in $n$ variables and the action is given by permuting the variables. The representation theory of $S_n$ provides a useful tool in the study of $P_n(\mathfrak{V})$ and $\Gamma_n(\mathfrak{V})$ (see, e.g., \cite{D1999}, Chapter 12). The $S_n$-module structure of $\Gamma_n(\mathfrak{N}_p)$ for $n=3$ is described in \cite{Volichenko_preprint} and for $n=4$ in \cite{Stoyanova1984(N4)}. We recall these results in Sections \ref{sec_N3} and \ref{sec_N4} and use them in the description of bases for the polynomial identities of $F_n(\mathfrak{N}_3)$ and $F_n(\mathfrak{N}_4)$. 

A useful tool to show that a polynomial $f \in K\langle X \rangle$ is not an identity for $F_n(\mathfrak{N}_p)$ for even values of $p$ is the following result of Deryabina and Krasilnikov \cite{DK2017}. Let $E$ denote the Grassmann algebra over a countable dimensional vector space and let $E_r$ denote the Grassmann algebra over an $r$-dimensional vector space. Then the following proposition holds.

\begin{prop} \cite{DK2017} \label{prop_DeryabinaKrasilnikov}
 For any integer $k \geq 1$, the algebra $E\otimes E_{2k}$ satisfies the polynomial identity $[x_1, \dots, x_{2k+3}] = 0$.
\end{prop}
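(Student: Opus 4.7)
The plan is to reduce to pure tensors by multilinearity and expand the iterated commutator in $E\otimes E_{2k}$ using a tensor-product Leibniz rule, so that the $\mathbb{Z}$-grading of $E_{2k}$ forces each surviving term to vanish. Since $[x_1,\dots,x_{2k+3}]$ is multilinear, it suffices to evaluate it on pure tensors $u_s=a_s\otimes b_s$ with $a_s\in E$ and $b_s\in E_{2k}$. The key computation
\[
[A\otimes B,\,a\otimes b]=[A,a]\otimes Bb+aA\otimes [B,b]
\]
is verified directly. Since both $E$ and $E_{2k}$ belong to $\mathfrak{N}_2$, every commutator $[x,y]$ is central in the respective algebra.

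I would then prove by induction on $r$ the following structural claim: each term of $[u_1,\dots,u_r]$ has the form $A_\alpha\otimes B_\alpha$, where $A_\alpha$ is a product (well-defined, since commutators are central in $E$) of $s(A_\alpha)$ commutators $[a_i,a_j]$ and $r-2s(A_\alpha)$ single generators $a_\ell$, using each index in $\{1,\dots,r\}$ exactly once; analogously for $B_\alpha$; and crucially $s(A_\alpha)+s(B_\alpha)=r-1$. In the inductive step, when expanding $[A_\alpha,a_{r+1}]$ by the Leibniz rule, every $[[a_i,a_j],a_{r+1}]$ drops out by centrality, so only pairings of $a_{r+1}$ with a plain factor of $A_\alpha$ survive, each increasing $s(A)$ by one; analogously for $[B_\alpha,b_{r+1}]$. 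Taking $r=2k+3$, the natural bounds $s(A_\alpha),s(B_\alpha)\le\lfloor r/2\rfloor=k+1$ (enough unpaired indices must be available) together with $s(A_\alpha)+s(B_\alpha)=2k+2$ force $s(A_\alpha)=s(B_\alpha)=k+1$.

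To conclude, I would equip $E_{2k}$ with its standard $\mathbb{Z}$-grading, supported in degrees $0,1,\dots,2k$. Decomposing any $x\in E_{2k}$ as $x=x_{(0)}+x_{(1)}$ into even and odd Grassmann parts, the even part is central, so $[x,y]=2\,x_{(1)}y_{(1)}$ lies in degrees $\ge 2$. Hence $B_\alpha$, a product of $k+1$ commutators and one additional plain generator, has degree $\ge 2(k+1)=2k+2>2k$ and therefore equals $0$ in $E_{2k}$. Every term of the expansion vanishes, giving $[x_1,\dots,x_{2k+3}]=0$ on $E\otimes E_{2k}$. The main obstacle is the structural claim: the induction requires careful bookkeeping of index pairings and of the centrality-based cancellations in $E$ that keep $s(A)+s(B)$ rigidly balanced at each nested-commutator step.
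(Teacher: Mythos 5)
Your argument is correct. Note first that the paper does not prove this proposition at all: it is imported verbatim from Deryabina--Krasilnikov \cite{DK2017}, so there is no in-paper proof to compare against. Your proof is a valid self-contained derivation, and it follows the natural route: the tensor Leibniz identity $[A\otimes B, a\otimes b]=[A,a]\otimes Bb+aA\otimes [B,b]$ is easily checked, and since both tensor factors satisfy $[x,y,z]=0$, the only surviving terms in the inductive expansion of $[u_1,\dots,u_r]$ on pure tensors are those in which the new entry pairs with an unpaired factor, which is exactly what keeps the invariant $s(A_\alpha)+s(B_\alpha)=r-1$ rigid. The counting $s(A_\alpha),s(B_\alpha)\le\lfloor (2k+3)/2\rfloor=k+1$ then forces $s(B_\alpha)=k+1$, and the observation that $[x,y]=2x_{(1)}y_{(1)}$ sits in $\mathbb{Z}$-degree at least $2$ kills $B_\alpha$ because $E_{2k}$ vanishes above degree $2k$. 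Two small points of wording you should tighten when writing this up: the entries $a_s,b_s$ are arbitrary elements of $E$ and $E_{2k}$, not generators (your degree argument already accommodates this via the even/odd decomposition, and the centrality of commutators holds for arbitrary entries, so nothing breaks); and in the final step it is cleaner to note that the product of the $k+1$ commutators alone already lies in degrees $\ge 2k+2$, so the extra unpaired factor of $B_\alpha$, which may have a nonzero degree-zero component, is irrelevant.
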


In the end of this section, we recall Lemma 8 of \cite{GdeM2015}:

\begin{lemma} \cite{GdeM2015} \label{nvariables}
    Let $n\geq 2$ be an integer and $A$ an arbitrary associative algebra. A polynomial $f\in K\langle x_1, \dots, x_n\rangle $ is a polynomial identity for $F_n(A)$ if and only if $f$ is a polynomial identity for $A$.
\end{lemma}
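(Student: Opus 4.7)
The plan is to reduce the statement to a direct unwinding of the definition of $F_n(A)$ together with the observation that $F_n(A)$ itself lies in the variety $\mathrm{var}(A)$ generated by $A$. Write $I = \mathrm{Id}(A)$, so that by definition
\[
F_n(A) = \frac{K\langle x_1,\dots,x_n\rangle}{I \cap K\langle x_1,\dots,x_n\rangle},
\]
and this algebra is naturally the subalgebra of the relatively free algebra $F(\mathrm{var}(A)) = K\langle X\rangle / I$ generated by the images $\bar{x}_1,\dots,\bar{x}_n$ of $x_1,\dots,x_n$. Since $F(\mathrm{var}(A))$ belongs to $\mathrm{var}(A)$ and varieties of associative algebras are closed under subalgebras, $F_n(A)$ also belongs to $\mathrm{var}(A)$.

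Once this is in place, the \emph{if} direction is immediate: if $f \in K\langle x_1,\dots,x_n\rangle$ is an identity of $A$, then $f \in I = \mathrm{Id}(\mathrm{var}(A))$, and because $F_n(A) \in \mathrm{var}(A)$, the polynomial $f$ vanishes on every tuple in $F_n(A)$, i.e.\ it is an identity of $F_n(A)$. For the \emph{only if} direction, I would simply evaluate a hypothetical identity $f$ of $F_n(A)$ on the canonical generators $\bar{x}_1,\dots,\bar{x}_n$, obtaining
\[
f(\bar{x}_1,\dots,\bar{x}_n) = \overline{f(x_1,\dots,x_n)} = 0
\]
in $F_n(A)$. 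By the definition of $F_n(A)$ this is exactly the assertion that $f \in I \cap K\langle x_1,\dots,x_n\rangle \subseteq I = \mathrm{Id}(A)$, so $f$ is a polynomial identity of $A$.

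There is no real obstacle here; the entire content is the observation that $F_n(A)$ sits inside $F(\mathrm{var}(A))$ and therefore inherits every identity of $A$, while conversely the canonical generators of $F_n(A)$ provide a single substitution whose vanishing forces $f$ to already lie in $I$. The restriction $n \geq 2$ plays no role in the argument and is only relevant to the applications of the lemma elsewhere in the paper.
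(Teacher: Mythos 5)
Your proof is correct. The paper itself does not prove this lemma --- it is quoted from \cite{GdeM2015} (Lemma 8 there) without proof --- but your argument is the standard one: the embedding $F_n(A) \hookrightarrow F(\mathrm{var}(A))$ gives $\mathrm{Id}(F_n(A)) \supseteq \mathrm{Id}(A)$, and evaluating an $n$-variable identity of $F_n(A)$ on the canonical generators $\bar{x}_1,\dots,\bar{x}_n$ shows it already lies in $\mathrm{Id}(A) \cap K\langle x_1,\dots,x_n\rangle$. Both directions are handled cleanly and nothing is missing.
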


The above result means that in order to obtain a polynomial identity for a relatively free algebra of rank $n$ in a variety $\mathfrak V$ which is not in $\mathrm{Id}(\mathfrak V)$, one needs to look for polynomials with more than $n$ variables.

\section{The variety $\mathfrak{N}_3$} \label{sec_N3}

In this section, we consider the variety $\mathfrak{N_3}$ defined by the polynomial identity $[x_1, x_2, x_3, x_4] = 0$. This identity can be written as
$[x_1,x_2,x_3]x_4 = x_4[x_1,x_2,x_3]$, which means: triple commutators are central.

We start with some consequences of $\mathfrak N_3$. They are immediate corollaries of Theorem \ref{thm_products_of_commutators} for $p = 3$ and can also be found in Gordienko's paper \cite{Gordienko2007} for $\operatorname{char}{K} \neq 3$.

\begin{lemma} \label{consequencesN3}
    The identity $[x_1,x_2,x_3, x_4]  = 0$ has the following consequences:
\begin{enumerate}
    \item $[[x_1, x_2], [x_3,x_4]] = 0$.
    \item $[x_1, x_2][x_3,x_4,x_5] = 0$.
    \item $[x_1, x_2][x_3,x_4][x_5, x_6] + [x_1,x_2][x_3,x_6][x_5,x_4] = 0$.
\end{enumerate}
\end{lemma}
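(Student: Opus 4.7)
My plan is to establish (1)--(3) by showing each polynomial lies in the T-ideal $I_4 = \langle [x_1,x_2,x_3,x_4]\rangle^T$ of identities of $\mathfrak{N}_3$, using the appropriate parts of Theorem~\ref{thm_products_of_commutators}.

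For (1), part (i) of the theorem with $p_1=p_2=2$ gives $[[x_1,x_2],[x_3,x_4]] \in [L_2,L_2] \subset L_4 \subset I_4$, where the last inclusion is because $I_p = K\langle X\rangle\cdot L_p$. For (2), since $p_2=3$ is odd, part (iv) with $p_1=2$, $p_2=3$ immediately yields $[x_1,x_2][x_3,x_4,x_5] \in I_2 I_3 \subset I_4$.

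For (3), I would first prove the auxiliary inclusion
\[
g := [x_3,x_4][x_5,x_6] + [x_3,x_6][x_5,x_4] \in I_3,
\]
and then multiply by $[x_1,x_2] \in I_2$ and apply part (iv) of Theorem~\ref{thm_products_of_commutators} to conclude $[x_1,x_2]\,g \in I_2 I_3 \subset I_4$. For the auxiliary inclusion, expanding $[x_3 x_5, x_4, x_6]$ via the derivation rule $[ab,c] = [a,c]b + a[b,c]$ (i.e., Lemma~\ref{CommutatorOfMonomials}(i) applied twice) yields
\[
[x_3 x_5, x_4, x_6] = [x_3,x_4][x_5,x_6] + [x_3,x_6][x_5,x_4] + [x_3,x_4,x_6]\,x_5 + x_3\,[x_5,x_4,x_6],
\]
so $g$ differs from the triple commutator $[x_3 x_5, x_4, x_6]$ only by terms that are themselves products of length-$3$ commutators with elements of $K\langle X\rangle$; hence $g \in I_3$. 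Alternatively, this is the borderline case of Lemma~\ref{lemma_even_comms} with $p_1=p_2=2$, $c_1 = x_3, c_2 = x_5 \in L_1$, $x = x_4$, $y = x_6$.

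The main obstacle is precisely this auxiliary inclusion $g \in I_3$: a naive attempt via $[x_1,x_2][x_3,x_4][x_5,x_6] \in I_2 I_2 I_2$ and iterated application of part (ii) only produces $I_2$, which is far too weak. The symmetrized combination on the left of (3) is engineered so that the middle factor gains one degree of commutator, allowing the odd-index hypothesis in part (iv) of Theorem~\ref{thm_products_of_commutators} to upgrade the full product to $I_4$.
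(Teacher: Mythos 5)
Your proposal is correct and matches the paper's (unwritten) argument: the paper simply declares these to be immediate corollaries of Theorem~\ref{thm_products_of_commutators}, and you derive (1) from part (i), (2) from part (iv) with $p_1=2$, $p_2=3$, and (3) from the degree-$2$ case of part (v) via Lemma~\ref{lemma_even_comms} followed by part (iv). Your direct expansion of $[x_3x_5,x_4,x_6]$ to verify $[x_3,x_4][x_5,x_6]+[x_3,x_6][x_5,x_4]\in I_3$ is a welcome addition, since the case $c_1,c_2\in L_1$ of part (v) is indeed borderline as stated.
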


Using identities 1 and 3 above, we obtain:

\begin{lemma}\label{permuting}
    If $m\geq 3$, then we have
    \[[x_{\sigma(1)}, x_{\sigma(2)}][x_{\sigma(3)}, x_{\sigma(4)}] \cdots [x_{\sigma(2m-1)}, x_{\sigma(2m)}] \equiv_{I_4} (-1)^{\sigma}[x_1,x_2][x_3,x_4]\cdots [x_{2m-1},x_{2m}].\]
\end{lemma}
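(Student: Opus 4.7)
My plan is to prove the statement by induction on the Coxeter length $\ell(\sigma)$, using that $S_{2m}$ is generated by adjacent transpositions. Write
\[
P_\sigma = [x_{\sigma(1)}, x_{\sigma(2)}][x_{\sigma(3)}, x_{\sigma(4)}] \cdots [x_{\sigma(2m-1)}, x_{\sigma(2m)}].
\]
Right-multiplying $\sigma$ by a simple transposition $\tau = (j, j+1)$ corresponds precisely to swapping the variables in positions $j$ and $j+1$ of $P_\sigma$. Since $(-1)^{\pi\tau} = -(-1)^\pi$, the inductive step reduces to the following local claim: for every $\pi \in S_{2m}$ and every adjacent transposition $\tau$,
\[
P_{\pi\tau} \equiv_{I_4} -P_\pi.
\]

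When $j = 2k-1$ is odd, both positions lie inside the $k$-th commutator, and the sign flip is immediate from $[a,b] = -[b,a]$. When $j = 2k$ is even, the swap crosses from the $k$-th to the $(k+1)$-th commutator, and here is where the hypothesis $m \geq 3$ is essential: there exists another commutator $C_r = [u,v]$ in the product with $r \notin \{k, k+1\}$. By identity 1 of Lemma \ref{consequencesN3}, all length-two commutators commute modulo $I_4$, so we may rearrange the product to place $C_r$ immediately to the left of $C_k C_{k+1} = [a,b][c,d]$. Applying identity 3 of Lemma \ref{consequencesN3} twice, with an anti-symmetric flip in between, yields
\begin{align*}
[u,v][a,b][c,d] &\equiv_{I_4} -[u,v][a,d][c,b] \\
&= [u,v][a,d][b,c] \\
&\equiv_{I_4} -[u,v][a,c][b,d],
\end{align*}
where the first and third steps apply identity 3 (with $C_r$ playing the role of $[x_1, x_2]$) and the middle equality is simply $[c,b] = -[b,c]$. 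A final rearrangement using identity 1 restores $C_r$ to its original location in the product, yielding $P_{\pi\tau} \equiv_{I_4} -P_\pi$ as desired.

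The main obstacle is the even case. Identity 3 of Lemma \ref{consequencesN3} does not directly swap the two adjacent entries $b$ and $c$ that cross between the commutators; it swaps the outer pair $b$ and $d$. One must iterate it twice, inserting an intra-commutator anti-symmetric flip in between, to implement the correct transposition. This iteration consumes a \emph{spare} commutator $C_r$, which is exactly what the hypothesis $m \geq 3$ guarantees; the analogous statement already fails for $m = 2$, so the hypothesis is sharp for this approach.
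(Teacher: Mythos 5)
Your proof is correct and follows exactly the route the paper indicates: the paper offers no argument beyond the remark that the lemma follows from identities 1 and 3 of Lemma \ref{consequencesN3}, and your induction on Coxeter length --- with the intra-commutator sign flip for odd positions and the two-fold application of identity 3 around an antisymmetry flip to realize the inter-commutator transposition $[a,b][c,d]\equiv_{I_4}-[a,c][b,d]$ --- is a complete and accurate version of that argument. The role of the spare commutator guaranteed by $m\geq 3$, which you isolate explicitly, is precisely the point the paper leaves implicit.
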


In addition, the following is an immediate consequence of identity $3$ of Lemma \ref{consequencesN3}.

\begin{lemma}\label{repeated}
    If $m\geq 3$, \[[x_{i_1},x_{i_2}]\cdots [x_{i_{2m-1}},x_{i_{2m}}] = 0\] is a consequence of $[x_1,x_2,x_3,x_4]$ whenever there is a pair of repeated variables.
\end{lemma}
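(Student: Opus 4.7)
The plan is to deduce Lemma \ref{repeated} as a direct consequence of the antisymmetry statement in Lemma \ref{permuting}. That lemma says that for $m\geq 3$ the product $[x_1,x_2]\cdots[x_{2m-1},x_{2m}]$ is antisymmetric in its $2m$ entries modulo $I_4$. The idea is simple: a product with a repeated entry is fixed by the transposition swapping the two equal positions, but antisymmetry forces such a product to equal its own negative, hence to vanish modulo $I_4$ in characteristic zero.

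More precisely, suppose $x_{i_a}=x_{i_b}$ for some $a\neq b$. Apply the endomorphism $x_j\mapsto x_{i_j}$ to the congruence in Lemma \ref{permuting}. This substitution is legitimate because $I_4$ is a T-ideal and is therefore preserved by every $K$-algebra endomorphism of $K\langle X\rangle$. Taking $\sigma$ to be the transposition $(a\ b)\in S_{2m}$, I observe that the left-hand side becomes the polynomial $[x_{i_1},x_{i_2}]\cdots[x_{i_{2m-1}},x_{i_{2m}}]$ itself: swapping the entries at positions $a$ and $b$ merely exchanges two equal variables, leaving every position unchanged. The right-hand side is $(-1)^\sigma=-1$ times the same polynomial. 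Hence
\[
[x_{i_1},x_{i_2}]\cdots[x_{i_{2m-1}},x_{i_{2m}}] \equiv -[x_{i_1},x_{i_2}]\cdots[x_{i_{2m-1}},x_{i_{2m}}] \pmod{I_4},
\]
and since $\operatorname{char} K=0$ I conclude that the product lies in $I_4$, i.e., the identity $[x_{i_1},x_{i_2}]\cdots[x_{i_{2m-1}},x_{i_{2m}}]=0$ follows from $[x_1,x_2,x_3,x_4]$.

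I do not anticipate any real obstacle; the argument is a one-line corollary of Lemma \ref{permuting} once the T-ideal substitution is recorded. The case where $a$ and $b$ lie in the same commutator is even more trivial, since then the commutator $[x_{i_a},x_{i_b}]$ already vanishes in the free algebra, but the argument above handles both cases uniformly.
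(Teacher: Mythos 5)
Your argument is correct and follows essentially the same route as the paper, which derives Lemma \ref{repeated} from the sign-permutation identities underlying Lemma \ref{permuting} (the paper states it as an immediate consequence of identity~3 of Lemma \ref{consequencesN3}). Your explicit use of the T-ideal property to justify the substitution $x_j\mapsto x_{i_j}$, followed by the $P\equiv -P$ argument in characteristic zero, is a clean and valid formalization of that same idea.
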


The following proposition is a consequence of Lemma \ref{permuting} and of Lemma \ref{repeated}.

\begin{prop}\label{identity}
    Let $m\geq 5$ be an odd integer. If $n \leq  m$, then $F_n(\mathfrak{N}_3)$ satisfies the polynomial identity
    \[f=[x_1,x_2]\cdots [x_{m},x_{m+1}] = 0.\]

\end{prop}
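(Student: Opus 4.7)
The plan is a short reduction to Lemma~\ref{repeated} via a pigeonhole argument. Because $f$ is multilinear and $F_n(\mathfrak{N}_3)$ is generated as a $K$-algebra by the images of $x_1,\ldots,x_n$, the polynomial $f$ will be an identity of $F_n(\mathfrak{N}_3)$ as soon as it vanishes on every substitution of its $m+1$ variables by generators. Equivalently, I need to show that for every choice of indices $j_1,\ldots,j_{m+1}\in\{1,\ldots,n\}$ the element
\[
[x_{j_1},x_{j_2}][x_{j_3},x_{j_4}]\cdots [x_{j_m},x_{j_{m+1}}]\in K\langle x_1,\ldots,x_n\rangle
\]
lies in $I_4\cap K\langle x_1,\ldots,x_n\rangle$, which is the defining ideal of $F_n(\mathfrak{N}_3)$.

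Now the hypothesis $n\le m$ together with the fact that $f$ involves $m+1$ variables is the key point: by the pigeonhole principle, at least two of the indices $j_1,\ldots,j_{m+1}$ must coincide, so the substituted product automatically contains a repeated variable. Moreover, the number of commutator factors in $f$ equals $(m+1)/2$, and the assumption that $m\ge 5$ is odd gives $(m+1)/2\ge 3$. This is precisely the hypothesis of Lemma~\ref{repeated} (with the ``$m$'' of that lemma equal to $(m+1)/2$), and the lemma then yields that the substituted product belongs to $I_4$, concluding the argument.

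The only subtlety worth flagging is that the repeated variable may appear either inside a single commutator, in which case the factor $[x_j,x_j]=0$ makes the product trivially zero, or across two distinct commutators, which is exactly the situation handled by Lemma~\ref{repeated} (whose proof in turn rests on identity~(3) of Lemma~\ref{consequencesN3}, allowing one to interchange entries between adjacent commutator pairs modulo $I_4$). Neither case poses any real obstacle, so the proof of Proposition~\ref{identity} is essentially a bookkeeping exercise combining pigeonhole with the structural lemmas already available.
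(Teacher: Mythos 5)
There is a genuine gap at the very first step. You claim that, because $f$ is multilinear and $F_n(\mathfrak{N}_3)$ is generated as a $K$-algebra by $x_1,\dots,x_n$, it suffices to check $f$ on substitutions of the variables by the \emph{generators}. That is not what multilinearity gives you: multilinearity reduces the verification of an identity to substitutions from a \emph{vector-space spanning set} of the algebra, and $F_n(\mathfrak{N}_3)$ is spanned as a vector space by all \emph{monomials} in $x_1,\dots,x_n$, not by the generators themselves. So the statement you actually need is that $f(m_1,\dots,m_{m+1})\in I_4$ for arbitrary monomials $m_1,\dots,m_{m+1}\in K\langle x_1,\dots,x_n\rangle$, and for such substitutions there is no repeated-variable pigeonhole available at the level of the commutator entries $m_i$, since the $m_i$ can all be distinct monomials.

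This monomial case is precisely where all the work in the paper's proof lies: one expands each $[m_i,m_j]$ via the Leibniz rule $[ab,c]=a[b,c]+[a,c]b$ to obtain sums of terms $u_0[x_{i_1},x_{i_2}]u_1\cdots u_{t-1}[x_{i_m},x_{i_{m+1}}]u_t$, and then uses $c[a,b]=[a,b]c-[a,b,c]$ together with $I_2I_3\subset I_4$ to push all the interstitial monomials $u_1,\dots,u_{t-1}$ to the right modulo $I_4$. Only after this reduction does one arrive at a product of pure double commutators $[x_{i_1},x_{i_2}]\cdots[x_{i_m},x_{i_{m+1}}]$ in at most $n$ variables, where your pigeonhole observation and Lemma~\ref{repeated} apply (that part of your argument is correct, and is also the easy part). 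As written, your proof establishes only that $f$ vanishes on tuples of generators, which is strictly weaker than $f$ being an identity; the reduction you invoke is unjustified and false in general.
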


	\begin{proof}
		Notice that it is enough to prove that if $n$ is odd, then $F_n(\mathfrak{N}_3)$  satisfies $[x_1, x_2]\cdots [x_n,x_{n+1}]$. This will imply that if $n$ is even, then $[x_1,x_2]\cdots [x_{n+1},x_{n+2}]$ is an identity for $F_n(\mathfrak{N}_3)$.
		
		So assume $n$ is odd and let $m_1,\dots, m_{n+1}$ be monomials in $K\langle x_1,\dots,x_n\rangle$. We need to show that $f(m_1,\dots,m_{n+1}) = [m_1,m_2]\cdots[m_{n},m_{n+1}] \in I_4$.
		
		Using the identity $[ab,c] = a[b,c]+[a,c]b$, we obtain that $f(m_1,\dots,m_{n+1})$ is a linear combination of elements of type 
		\[
		h = u_0[x_{i_1}, x_{i_2}]u_1[x_{i_3}, x_{i_4}]u_2 \cdots u_{t-1}[x_{i_{n}}, x_{i_{n+1}}] u_t,
		\]
		where $u_0, \dots, u_t$ are monomials in $x_1,\dots, x_n$.
		
		Since any product of a double and a triple commutator is an element of $I_4$, we use the identity \[c[a,b] = [a,b]c-[a,b,c],\] to move monomials to the rightmost position and we obtain that modulo $I_4$, 
		\[
		h = [x_{i_1}, x_{i_2}]\cdots [x_{i_{n}}, x_{i_{n+1}}] u_0\cdots u_t,
		\] 
		Now, since we consider only $n$ variables, there is at least one repetition among $x_{i_1}, \dots , x_{i_{n+1}}$. Now the proof follows from Lemma \ref{repeated}.	
	\end{proof}
    
We are now ready to prove the following theorem.

\begin{theorem} \label{thm_identities_N3}
Consider the relatively free algebra $F_n(\mathfrak{N_3})$.
\begin{itemize}

\item[(i)] Let $n \geq 4$ be an even integer. Then the $T$-ideal of polynomial identities of $F_n(\mathfrak{N_3})$ is generated by the polynomials
\begin{align*}
[x_1, x_2, x_3, x_4] \text{ and } [x_1, x_2] \cdots [x_{n+1}, x_{n+2}].
\end{align*}

\item[(ii)] Let $n \geq 5$ be an odd integer. Then the $T$-ideal of polynomial identities of $F_n(\mathfrak{N_3})$ is generated by the polynomials
\begin{align*}
[x_1, x_2, x_3, x_4] \text{ and } [x_1, x_2] \cdots [x_n, x_{n+1}].
\end{align*}
\end{itemize}
\end{theorem}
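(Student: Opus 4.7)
The inclusion $\langle [x_1,x_2,x_3,x_4], [x_1,x_2]\cdots[x_{m},x_{m+1}]\rangle^T \subseteq \mathrm{Id}(F_n(\mathfrak{N}_3))$ (with $m=n+1$ if $n$ is even and $m=n$ if $n$ is odd) is immediate: the first polynomial defines $\mathfrak{N}_3$, and Proposition \ref{identity} yields the second, since in both cases $m$ is odd and $n\leq m$. For the reverse inclusion, let $J$ denote the T-ideal generated by the two polynomials. Because $\mathrm{char}\,K=0$ and $F_n(\mathfrak{N}_3)$ is unital, it suffices to prove $\Gamma_k \cap \mathrm{Id}(F_n(\mathfrak{N}_3))\subseteq J$ for every $k$.

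The strategy is to work modulo $I_4\subseteq J$ and show that, for $k>n$, the space $\Gamma_k/(\Gamma_k\cap I_4)$ is very small: at most one-dimensional (for even $k$) and zero (for odd $k\geq 5$). Let $f\in\Gamma_k\cap \mathrm{Id}(F_n(\mathfrak{N}_3))$. If $k\leq n$, then by Lemma \ref{nvariables} $f$ is already an identity of $\mathfrak{N}_3$, so $f\in I_4\subseteq J$. Assume $k>n\geq 4$; so $k\geq 5$. Expanding $f$ as a linear combination of products of pure commutators, any factor of length $\geq 4$ pushes the summand into $I_4$, so we may restrict attention to summands whose factors all have length $2$ or $3$; if $s$ denotes the number of length-$3$ factors, then $s\equiv k\pmod 2$. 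When $k$ is odd, $s\geq 1$, and the summand contains either a double together with a triple (giving $I_2I_3\subseteq I_4$ by Theorem \ref{thm_products_of_commutators}(iv), i.e., identity (2) of Lemma \ref{consequencesN3}) or two triples (giving $I_3I_3\subseteq I_4$), so the whole summand lies in $I_4$. Thus $f\in I_4\subseteq J$.

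When $k$ is even and $k\geq 6$, the same reasoning eliminates every summand with $s\geq 2$, leaving only products of $k/2\geq 3$ double commutators. Lemma \ref{permuting} then reduces each such product, up to sign and modulo $I_4$, to $[x_1,x_2]\cdots[x_{k-1},x_k]$, and Lemma \ref{repeated} forces the indices to be distinct; hence $f\equiv \lambda[x_1,x_2]\cdots[x_{k-1},x_k]\pmod{I_4}$ for some $\lambda\in K$. Since $m+1\in\{n+1,n+2\}$ is even and $\leq k$ with $k-(m+1)$ even, multiplying the generator $[x_1,x_2]\cdots[x_m,x_{m+1}]\in J$ by $[x_{m+2},x_{m+3}]\cdots[x_{k-1},x_k]$ shows $[x_1,x_2]\cdots[x_{k-1},x_k]\in J$; hence $f\in J$. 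The main technical point is the case-by-case verification that every product of pure commutators of even degree $\geq 6$ containing a length-$3$ factor already lies in $I_4$, which relies crucially on Theorem \ref{thm_products_of_commutators}(iv); the remainder of the argument is bookkeeping through Lemmas \ref{consequencesN3}, \ref{permuting}, and \ref{repeated}.
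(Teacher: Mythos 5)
Your proposal is correct and follows essentially the same route as the paper: one inclusion via Proposition \ref{identity}, and the reverse by reducing to proper components, discarding everything containing a commutator of length $\geq 3$ as lying in $I_4$ (via Theorem \ref{thm_products_of_commutators}(iv)), handling components in at most $n$ variables with Lemma \ref{nvariables}, and observing that the surviving high-degree products of double commutators are automatic consequences of the product generator. The only (immaterial) differences are that you work with multilinear rather than multihomogeneous components, treat even and odd $n$ uniformly where the paper deduces odd $n$ from the even case $n-1$, and add the (not strictly needed) normalization via Lemmas \ref{permuting} and \ref{repeated}.
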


\begin{proof}
Let $n \geq 4$ be an even integer. Proposition \ref{identity} implies that the product $[x_1, x_2] \cdots [x_{n+1}, x_{n+2}]$ is a polynomial identity for $F_n(\mathfrak{N_3})$. Therefore, to prove the statement, it is enough to show that if $f$ is a proper polynomial in $K \left \langle x_1, \dots, x_m \right \rangle$ for some integer $m \geq 1$ and if $f$ is an identity for $F_n(\mathfrak{N_3})$, then $f \in \left \langle [x_1, x_2, x_3, x_4], [x_1, x_2] \cdots [x_{n+1}, x_{n+2}] \right \rangle ^T$. 

Using Theorem \ref{thm_products_of_commutators}, we can write the polynomial $f$ in the form
\begin{align*}
f = &\sum_{k \geq 1} \sum_{1\leq i_1, i_2, \dots, i_{2k} \leq m} a_{i_1,i_2\dots, i_{2k}}[x_{i_1}, x_{i_2}][x_{i_3}, x_{i_4}]\cdots [x_{i_{2k-1}}, x_{i_{2k}}] +\\
& \sum_{1 \leq i_1, i_2, i_3 \leq m} b_{i_1, i_2, i_3} [x_{i_1}, x_{i_2}, x_{i_3}] + f_1,
\end{align*}
where $f_1 \in I_4$. Since $f$ is an identity for $F_n(\mathfrak{N_3})$, each multihomogeneous component of $f$ is also an identity for $F_n(\mathfrak{N_3})$. By Lemma \ref{nvariables}, all multihomogeneous components of degree less or equal to $n$ are identities of $F_n(\mathfrak{N_3})$ if and only if they belong to $I_4$. Therefore, we only need to consider multihomogeneous components of degree greater than $n$. Hence, we need to consider
\[
f' = \sum_{k > n/2} \sum_{1\leq i_1, i_2, \dots, i_{2k} \leq m} a_{i_1,i_2\dots, i_{2k}}[x_{i_1}, x_{i_2}][x_{i_3}, x_{i_4}]\cdots [x_{i_{2k-1}}, x_{i_{2k}}].
\]
We notice that each monomial of $f'$ belongs to $\left\langle [x_1, x_2] \cdots [x_{n+1}, x_{n+2}]\right\rangle^T$. This finishes the proof of the theorem for even $n$.

Next, let $n \geq 5$ be an odd integer and let $f$ be a polynomial identity for $F_n(\mathfrak{N}_3)$. Then $f$ is also a polynomial identity for $F_{n-1}(\mathfrak{N}_3)$ and hence by Part (i)
\[
f \in \left\langle [x_1, x_2, x_3, x_4], [x_1, x_2]\cdots [x_{n}, x_{n+1}]\right\rangle^T.
\]
\end{proof}
\begin{remark} We notice that the polynomial $[x_1, x_2][x_3, x_4]$ is not an identity for $F_2(\mathfrak{N}_3)$, nor for $F_3(\mathfrak{N}_3)$. Hence, Theorem \ref{thm_identities_N3} does not hold for $n=2$ and $n=3$.
\end{remark}

The next step is to consider the cases $n=2$ and $n=3$.
By $s_m(x_1, \dots, x_m)$ we denote the standard polynomial of degree $m$, i.e.,
\[
s_m(x_1, \dots, x_m) = \sum_{\sigma \in S_m} (-1)^{\sigma}x_{\sigma(1)} x_{\sigma(2)} \cdots x_{\sigma(m)}.
\]

It is well-known that when $m = 2k$ we have
\[
s_{2k}(x_1, \dots, x_k) = \frac{1}{2^k}\sum_{\sigma \in S_{2k}} (-1)^{\sigma} [x_{\sigma(1)}, x_{\sigma(2)}] \cdots [x_{\sigma(2k-1)}, x_{\sigma(2k)}].
\]

Then, the following proposition holds.

\begin{prop} \label{standard4}
Let $n=2$ or $n=3$. Then the standard polynomial 
\[
s_4(x_1, \dots, x_4) =\frac{1}{2^{2}}\sum_{\sigma \in S_{4}} (-1)^{\sigma} [x_{\sigma(1)}, x_{\sigma(2)}] [x_{\sigma(3)}, x_{\sigma(4)}]
\]
is an identity for $F_n(\mathfrak{N}_3)$.
\end{prop}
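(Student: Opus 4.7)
The plan is to prove $s_4(m_1, m_2, m_3, m_4) \in I_4$ for arbitrary monomials $m_1, \ldots, m_4 \in K\langle x_1, \ldots, x_n \rangle$ with $n \in \{2, 3\}$, which by multilinearity of $s_4$ implies it is an identity of $F_n(\mathfrak{N}_3)$. The approach is to reduce $s_4$ modulo $I_4$ to a three-term Pl\"ucker-like expression, expand each commutator in pure single-letter form via Lemma \ref{CommutatorOfMonomials}(i), and conclude by a pigeonhole cancellation.

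First, using Lemma \ref{consequencesN3}(1) (double commutators commute modulo $I_4$), the $24$ signed terms in
\[
s_4(m_1, m_2, m_3, m_4) = \frac{1}{4}\sum_{\sigma \in S_4} (-1)^\sigma [m_{\sigma(1)}, m_{\sigma(2)}][m_{\sigma(3)}, m_{\sigma(4)}]
\]
collapse modulo $I_4$ by unordered pairings of $\{1, 2, 3, 4\}$; each of the three pairings contributes eight equal signed terms, giving
\[
s_4(m_1, m_2, m_3, m_4) \equiv 2\Delta \pmod{I_4}, \quad \Delta = [m_1, m_2][m_3, m_4] - [m_1, m_3][m_2, m_4] + [m_1, m_4][m_2, m_3].
\]

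Next, expanding each $[m_a, m_b]$ by Lemma \ref{CommutatorOfMonomials}(i) and using that the error from moving a monomial factor past a double commutator lies in $I_3$ (so, multiplied by the remaining double commutator, in $I_3 \cdot I_2 \subset I_4$ by Theorem \ref{thm_products_of_commutators}(iv)), and that subsequent rearrangements of monomial factors within $I_2 \cdot I_2 \cdot (\text{monomials})$ produce errors in $I_2 \cdot I_2 \cdot I_2$, which intersects $K\langle x_1, x_2, x_3\rangle$ in $I_2 \cdot I_3 \subset I_4$ thanks to Theorem \ref{thm_products_of_commutators}(vi) and (iv), one obtains
\[
[m_a, m_b][m_c, m_d] \equiv \sum_{\vec\alpha} [y_a^{\alpha_a}, y_b^{\alpha_b}][y_c^{\alpha_c}, y_d^{\alpha_d}] \cdot \pi_1^{\alpha_1} \pi_2^{\alpha_2} \pi_3^{\alpha_3} \pi_4^{\alpha_4} \pmod{I_4},
\]
where $\vec\alpha$ runs over position tuples, $y_s^{\alpha_s}$ is the letter of $m_s$ at position $\alpha_s$, and $\pi_s^{\alpha_s}$ is $m_s$ with that letter deleted. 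Since the monomial factor depends only on $\vec\alpha$ and not on the pairing, summing in $\Delta$ yields
\[
\Delta \equiv \sum_{\vec\alpha} E_{\vec\alpha} \cdot \pi_1^{\alpha_1} \pi_2^{\alpha_2} \pi_3^{\alpha_3} \pi_4^{\alpha_4} \pmod{I_4},
\]
with $E_{\vec\alpha} = [y_1, y_2][y_3, y_4] - [y_1, y_3][y_2, y_4] + [y_1, y_4][y_2, y_3]$ and $y_s = y_s^{\alpha_s}$.

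Finally, since $y_1, y_2, y_3, y_4 \in \{x_1, \ldots, x_n\}$ with $n \leq 3$, pigeonhole forces two letters to coincide; a short case check then shows $E_{\vec\alpha} \equiv 0 \pmod{I_4}$ in every case. For example, $y_1 = y_2$ makes $[y_1, y_2] = 0$ and the remaining two terms become $-[y_1, y_3][y_1, y_4] + [y_1, y_4][y_1, y_3]$, which vanishes modulo $I_4$ by Lemma \ref{consequencesN3}(1); the other five coincidences reduce analogously. Hence $\Delta \equiv 0 \pmod{I_4}$ and so $s_4(m_1, m_2, m_3, m_4) \in I_4$. The main obstacle is the commutator centralization in the second step, which crucially relies on the three-variable bound of Theorem \ref{thm_products_of_commutators}(vi) together with $I_3 \cdot I_2 \subset I_4$ from Theorem \ref{thm_products_of_commutators}(iv).
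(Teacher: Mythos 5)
Your proof is correct and follows essentially the same route as the paper's: evaluate on monomials, pull single letters out of the commutators via Lemma \ref{CommutatorOfMonomials}(i), centralize the leftover monomial factors using Theorem \ref{thm_products_of_commutators}(vi) together with $I_3I_2\subset I_4$, and finish by pigeonhole on at most three variables (the paper phrases the last step as the alternating sum $s_4$ of pure letters vanishing under a repetition, which is your $E_{\vec\alpha}\equiv 0$ case check; your preliminary collapse of the $24$ terms to the three-term Pl\"ucker expression $\Delta$ is only a cosmetic repackaging). The one place to tighten the wording: the claim that $I_2I_2I_2$ ``intersects $K\langle x_1,x_2,x_3\rangle$ in $I_2I_3$'' should be stated for the specific factored elements at hand (each factor already lies in $K\langle x_1,x_2,x_3\rangle$, so $(I_2I_2\cap K\langle x_1,x_2,x_3\rangle)\cdot I_2\subset I_3I_2\subset I_4$ applies), not as a general fact about the intersection.
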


\begin{proof}
The proof is similar to the proofs of Propositions  \ref{prop_polynomialIdentity_odd_n} and \ref{prop_identity_g3_odd}. The idea is to show that any evaluation of $s_4$ on monomials vanishes. Let $m_1, m_2, m_3, m_4$ be monomials in $K \left \langle x_1, \dots, x_n \right \rangle$ (where $n =2$ or $n=3$).
We set $m_i = x_i^1 \cdots x_i^{q_i}$ for each $i = 1, \dots, 4$, where $x_i^j \in \{x_1, \dots, x_n\}$ for each $i$ and $j$. Then, we have to show that
\[
s_4(m_1, \dots, m_{4}) \equiv_{I_4} 0,
\]
where the notation $\equiv_{I_4}$ means equivalence modulo $I_4$.
We use Lemma \ref{CommutatorOfMonomials} to move variables outside the commutators. Then due to Theorem \ref{thm_products_of_commutators} (vi) we observe that up to an element of $I_4$ we can exchange the places of variables outside commutators and we can move the variables which are outside commutators to the left. Thus we obtain
\begin{align*}
s_4(m_1, \dots, m_4) \equiv_{I_4} &\frac{1}{4} \sum_{i_1, \dots, i_4}^{q_1, \dots, q_4}  \beta_{i_1, \dots, i_4}  \\
&\sum_{\sigma \in S_4} (-1)^{\sigma}[x_{\sigma(1)}^{i_{\sigma(1)}}, x_{\sigma(2)}^{i_{\sigma(2)}}] [x_{\sigma(3)}^{i_{\sigma(3)}}, x_{\sigma(4)}^{i_{\sigma(4)}}],
\end{align*}
where
\[
\beta_{i_1, \dots, i_4} = x_{1}^1\cdots \widehat{x_{1}^{i_1}} \cdots x_{1}^{q_{1}} \cdots x_{4}^1\cdots \widehat{x_{4}^{i_{4}}} \cdots x_{4}^{q_{4}}.
\]
Here, the notation $\widehat{x_{1}^{i_1}}$ means that the element $x_{1}^{i_1}$ is missing from the product.

Since $n=2$ or $n=3$, we have repeating variables in the above equation. This implies that the right-hand side of the above equation vanishes. This proves the statement.
\end{proof}

Next, we recall the following result due to Volichenko.

\begin{prop} \label{Volichenko} \cite{Volichenko_preprint}  The $S_m$-module $\Gamma_m(\mathfrak N_3)$ decomposes as a direct sum of irreducible $S_m$-modules pairwise non-isomorphic in the following way:

\begin{enumerate}
    \item $\Gamma_2(\mathfrak{N}_3)=(KS_2) \cdot u_2^{(1)}$.
    \item $\Gamma_3(\mathfrak{N}_3)=(KS_3)\cdot u_3^{(1)}$.
    \item $\Gamma_4(\mathfrak{N}_3) = (KS_4)\cdot u_{4,2}^{(2)} \oplus (KS_4)\cdot u_{4,4}^{(5)}$
    \item If $m\geq 5$, then $\Gamma_m(\mathfrak N_3)$ decomposes as 
    \[\Gamma_m(\mathfrak{N}_3) = \left\{\begin{array}{cc}
        K(S_m)\cdot u_{(m,m)}^{(5)} & \text{if } m \text{ is even}  \\
         0 & \text{if } m \text{ is odd}  
    \end{array}\right.\]
\end{enumerate}
Here the polynomials $u_2^{(1)}$, $u_3^{(1)}$, $u_{4,2}^{(2)}$, $u_{m,m}^{(5)}$ are the complete linearizations of the following polynomials:

\begin{enumerate}
 \item $f_2^{(1)}=[x_1,x_2]$.
 \item $f_3^{(1)}=[x_2,x_1,x_1]$.
 \item $f_{4,2}^{(2)}=[x_1,x_2]^2$.
 \item $f_{m,m}^{(5)}=s_m(x_1,\dots,x_m)$.
 \end{enumerate}

\end{prop}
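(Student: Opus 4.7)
The plan is to determine the structure of $\Gamma_m(\mathfrak{N}_3)$ by first extracting a compact spanning set from the defining relations of $\mathfrak{N}_3$, and then analyzing how $S_m$ acts on it. Since $[x_1,x_2,x_3,x_4]=0$ in $\mathfrak{N}_3$, every pure commutator of length at least $4$ vanishes. Moreover, by Lemma \ref{consequencesN3}(2), any product containing at least one triple commutator together with any other commutator factor is zero modulo $I_4$. Consequently, modulo $\mathrm{Id}(\mathfrak{N}_3)$, a spanning set for $\Gamma_m(\mathfrak{N}_3)$ is: $[x_1,x_2]$ for $m=2$; the pure triple commutators $[x_i,x_j,x_k]$ for $m=3$; and the multilinear products of pair commutators $[x_{i_1},x_{i_2}]\cdots [x_{i_{m-1}},x_{i_m}]$ for $m\geq 4$.

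For $m=2$, the module is the one-dimensional sign representation of $S_2$, generated by $u_2^{(1)}$. For $m=3$, using antisymmetry and the Jacobi identity one finds that $\dim \Gamma_3(\mathfrak{N}_3) = 2$, realizing the two-dimensional irreducible $S_3$-module associated with the partition $(2,1)$, generated under the $S_3$-action by the linearization $u_3^{(1)}$ of $[x_2,x_1,x_1]$. For $m=4$, Lemma \ref{consequencesN3}(1) lets the two pair commutators commute, and combined with antisymmetry within each pair, the spanning set collapses to the three pairings of $\{1,2,3,4\}$, yielding an at most $3$-dimensional space. A direct computation using Young symmetrizers (or character comparison) then identifies two pairwise non-isomorphic irreducible summands, generated respectively by the linearization $u_{4,2}^{(2)}$ of $[x_1,x_2]^2$ and by $s_4 = u_{4,4}^{(5)}$.

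For $m\geq 5$, any multilinear product of pair commutators must use each variable $x_1,\dots,x_m$ exactly once, so $\Gamma_m(\mathfrak{N}_3)=0$ whenever $m$ is odd. When $m\geq 6$ is even, Lemma \ref{permuting} shows that every such product equals $(-1)^\sigma[x_1,x_2]\cdots[x_{m-1},x_m]$ modulo $I_4$ for some $\sigma\in S_m$, so $\Gamma_m(\mathfrak{N}_3)$ is at most one-dimensional and is contained in the sign $S_m$-representation. To upgrade this to equality one must show that $s_m$ does not vanish modulo $\mathrm{Id}(\mathfrak{N}_3)$ for every even $m$, for instance by evaluating on an explicit algebra in $\mathfrak{N}_3$, such as a suitable tensor product involving Grassmann algebras supplied by Proposition \ref{prop_DeryabinaKrasilnikov}. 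The main obstacle I anticipate is the $m=4$ case, where Lemma \ref{permuting} does not apply and the two-summand decomposition has to be carried out by hand, together with producing concrete Lie-nilpotent witnesses to ensure non-vanishing of both $u_{4,2}^{(2)}$ and of $s_m$ for all relevant even $m$.
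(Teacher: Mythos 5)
Your route is genuinely different from the paper's. The paper does not argue from spanning sets at all: it observes that $\mathfrak{N}_3$ is a subvariety of the variety $\mathfrak{M}_5$ defined by $[x_1,x_2][x_3,x_4,x_5]$, imports Stoyanova-Venkova's decomposition of $\Gamma_m(\mathfrak{M}_5)$ into irreducibles each generated by an explicit polynomial, and then only has to decide which of those generators become consequences of $[x_1,x_2,x_3,x_4]$. That reduction buys the hard part for free, namely the lower bounds: the surviving components are already known to be nonzero and irreducible. Your bottom-up argument (spanning sets giving upper bounds, then witnesses giving lower bounds) is more self-contained, and the upper-bound half is essentially right: modulo $I_4$, commutators of length at least $4$ vanish and any product of two commutator factors one of which is a triple vanishes, so $\Gamma_m(\mathfrak{N}_3)$ is spanned by triple commutators for $m=3$, by products of pair commutators for even $m\geq 4$, and is $0$ for odd $m\geq 5$; Lemma \ref{permuting} then caps the even case $m\geq 6$ at dimension one.

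The gap is in the lower bounds, and the tool you propose cannot close it. Proposition \ref{prop_DeryabinaKrasilnikov} produces algebras $E\otimes E_{2k}$ satisfying $[x_1,\dots,x_{2k+3}]=0$ with $2k+3\geq 5$; none of these is asserted to lie in $\mathfrak{N}_3$, so evaluations in them say nothing about membership in $I_4=\mathrm{Id}(\mathfrak{N}_3)$. For $s_m$ this is easily repaired: $E\in\mathfrak{N}_2\subset\mathfrak{N}_3$ and $s_m(e_1,\dots,e_m)=m!\,e_1\cdots e_m\neq 0$. But for $u_{4,2}^{(2)}$ no witness of this kind exists, because $[x_1,x_2]^2$ \emph{is} an identity of $E$ (in $E$ one has $[a,b][c,d]=-[a,d][c,b]$, hence $[a,b]^2=0$), so you need an algebra in $\mathfrak{N}_3\setminus\mathfrak{N}_2$ on which $[x_1,x_2]^2$ does not vanish, and you supply none. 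The same issue affects your claim that $\dim\Gamma_4(\mathfrak{N}_3)=3$: Young symmetrizers applied to the three pairings only tell you which irreducibles can occur in their span, not that the span does not collapse further inside $K\langle X\rangle/I_4$; ruling out extra relations coming from $P_4\cap I_4$ is exactly the computation the paper outsources to \cite{Stoyanova1982(M5)}. Until a concrete witness (or an explicit computation in $F(\mathfrak{N}_3)$) is produced showing $[x_1,x_2]^2\notin I_4$ and the linear independence of the three pairings modulo $I_4$, the $m=4$ item of the proposition is not proved.
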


The above result was proven in the unpublished preprint \cite{Volichenko_preprint}. 
A proof of the above result can be obtained as follows (we just include a sketch of the proof, not to make the paper too long).

\begin{enumerate}
    \item Notice that the variety $\mathfrak{N}_3$ is a subvariety of the varitey $\mathfrak{M}_5$, generated by the identity $[x_1,x_2][x_3,x_4,x_5]$.
    \item The $S_n$-module structure of the proper multilinear polynomials of degree $n$ modulo the identity $[x_1,x_2][x_3,x_4,x_5]$ is given by Stoyanova-Venkova in \cite{Stoyanova1982(M5)} as a direct sum of irreducible submodules, each of which is generated by a single polynomial.
    \item Verify which of the generators of the modules from (2) are consequences of $[x_1,x_2,x_3,x_4]$ and which are not.
    \item  The $S_n$-module structure of the proper multilinear polynomials of degree $n$ modulo the identity $[x_1,x_2,x_3,x_4]$ will be given by the sum of the irreducible components from (2) whose generators are not consequences of $[x_1,x_2,x_3,x_4]$. 
\end{enumerate}

Now we can use the above result to prove the analogue of Theorem \ref{thm_identities_N3} for $n=2$ and $n=3$.

\begin{theorem}\label{thm_id_small_deg}
    \[\mathrm{Id}(F_2(\mathfrak{N}_3)) = \mathrm{Id}(F_3(\mathfrak{N}_3)) = \langle [x_1,x_2,x_3,x_4], s_4(x_1,x_2,x_3,x_4) \rangle^T.\]
\end{theorem}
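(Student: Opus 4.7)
The plan is to prove the theorem for $F_2(\mathfrak{N}_3)$ directly and deduce the case of $F_3(\mathfrak{N}_3)$ from it. The inclusion $\langle [x_1,x_2,x_3,x_4],s_4\rangle^T\subseteq \mathrm{Id}(F_n(\mathfrak{N}_3))$ for $n\in\{2,3\}$ is immediate: $[x_1,x_2,x_3,x_4]$ vanishes in $\mathfrak{N}_3$, and $s_4$ is an identity of $F_n(\mathfrak{N}_3)$ for $n=2,3$ by Proposition \ref{standard4}. On the other hand, the embedding $K\langle x_1,x_2\rangle\hookrightarrow K\langle x_1,x_2,x_3\rangle$ descends to an injection $F_2(\mathfrak{N}_3)\hookrightarrow F_3(\mathfrak{N}_3)$, which yields $\mathrm{Id}(F_3(\mathfrak{N}_3))\subseteq \mathrm{Id}(F_2(\mathfrak{N}_3))$. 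So it will suffice to establish $\mathrm{Id}(F_2(\mathfrak{N}_3))\subseteq \langle [x_1,x_2,x_3,x_4],s_4\rangle^T$.

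Since $\mathrm{char}\,K=0$, every identity of $F_2(\mathfrak{N}_3)$ is a consequence of its proper multilinear identities, so I will establish the claim for a proper multilinear $g\in \mathrm{Id}(F_2(\mathfrak{N}_3))\cap \Gamma_m$ of each degree $m$. The image of $\mathrm{Id}(F_2(\mathfrak{N}_3))\cap \Gamma_m$ inside $\Gamma_m(\mathfrak{N}_3)$ is an $S_m$-submodule, so by Proposition \ref{Volichenko} and the irreducibility of each listed summand, this submodule must be a direct sum of some subset of those summands. One then checks, for each listed generator, whether it represents an identity of $F_2(\mathfrak{N}_3)$, and if it does, whether the whole associated summand lies in $\langle s_4\rangle^T$ modulo $I_4$. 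The generators $u_2^{(1)}$, $u_3^{(1)}$, and $u_{4,2}^{(2)}$ are complete linearizations of $[x_1,x_2]$, $[x_2,x_1,x_1]$, and $[x_1,x_2]^2$, each of which involves only two variables; in characteristic zero a polynomial and its complete linearization generate the same $T$-ideal, so such a linearization is an identity of $F_2(\mathfrak{N}_3)$ if and only if the original two-variable polynomial is, and by Lemma \ref{nvariables} this occurs if and only if that polynomial already lies in $I_4$. Since the three corresponding summands of $\Gamma_\bullet(\mathfrak{N}_3)$ are nonzero by Proposition \ref{Volichenko}, none of these summands is contained in $\mathrm{Id}(F_2(\mathfrak{N}_3))$. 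For the remaining summand $(KS_m)\cdot s_m$ with even $m\geq 4$, Proposition \ref{standard4} gives $s_4\in\mathrm{Id}(F_2(\mathfrak{N}_3))$, and iterating the standard expansion
\[
s_m(x_1,\ldots,x_m)=\sum_{i=1}^{m}(-1)^{i+1}\, x_i\, s_{m-1}(x_1,\ldots,\widehat{x_i},\ldots,x_m)
\]
yields $s_m\in\langle s_4\rangle^T\subseteq \mathrm{Id}(F_2(\mathfrak{N}_3))$ for every even $m\geq 4$, so this summand lies in $\langle s_4\rangle^T$ modulo $I_4$. Combined with $I_4=\langle [x_1,x_2,x_3,x_4]\rangle^T$, this places $g$ in $\langle [x_1,x_2,x_3,x_4],s_4\rangle^T$.

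The main delicate point is the use of Proposition \ref{Volichenko}: we exploit the irreducibility of its summands to obtain the all-or-nothing dichotomy, and we also rely on the explicit identification of three of the generators as complete linearizations of two-variable polynomials, so that Lemma \ref{nvariables} can rule them out as identities of $F_2(\mathfrak{N}_3)$. Once these structural inputs are in place, the rest of the argument is elementary.
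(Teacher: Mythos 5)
Your proof is correct and follows essentially the same route as the paper's: decompose $\Gamma_m(\mathfrak{N}_3)$ via Proposition \ref{Volichenko}, rule out the two-variable generators $u_2^{(1)}$, $u_3^{(1)}$, $u_{4,2}^{(2)}$ with Lemma \ref{nvariables}, and absorb the standard-polynomial summands into $\langle s_4\rangle^T$ via Proposition \ref{standard4} together with the fact that $s_{m}$ is a consequence of $s_4$. The only cosmetic difference is that you deduce the $F_3$ case from the $F_2$ case via the embedding $F_2(\mathfrak{N}_3)\hookrightarrow F_3(\mathfrak{N}_3)$, whereas the paper checks both ranks simultaneously.
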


\begin{proof}
    Notice that $\mathrm{Id}(F_k(\mathfrak{N}_3))\subseteq \mathrm{Id}(\mathfrak{N}_3)$. Then, the $S_m$-module structure of $\Gamma_m(F_k(\mathfrak{N}_3))$ can be obtained by checking which of the generators of $\Gamma_m(\mathfrak{N}_3)$ are identities for $F_k(\mathfrak{N}_3)$. Recall that a polynomial $f$ is an identity for an algebra $A$ over a field of characteristic zero if and only if its complete linearization is an identity for $A$. Notice that the polynomials $f_2^{(1)}$, $f_{3}^{(1)}$ and $f_{4,2}^{(2)}$ are polynomials in 2 variables. So it follows from Lemma \ref{nvariables} that they are not identities for $F_k(\mathfrak{N}_3)$ if $k\geq 2$. Now it is enough to check if $s_m(x_1,\dots,x_m)$ are identities for $F_k(\mathfrak{N}_3)$. By Proposition \ref{standard4}  $s_4$ is an identity for $F_2(\mathfrak{N}_3)$ and $F_3(\mathfrak{N}_3)$. And since $s_{m+1}$ is a consequence of $s_m$, for each $m$, we obtain that $s_m \in \mathrm{Id}(F_2(\mathfrak{N}_3))$ and $s_m \in \mathrm{Id}(F_3(\mathfrak{N}_3))$ for all $m\geq 4$. This means that for the generators of the irreducible $S_m$-submodules of $\Gamma_m(\mathfrak{N}_3)$, it holds that either they are not identities for $F_k(\mathfrak{N}_3)$ ($k=2$ or $k=3$), or they are consequences of $s_4$. This proves the theorem.  
\end{proof}

\begin{remark}
    We notice that the above approach can also be used to obtain the T-ideal $\mathrm{Id}(F_k(\mathfrak{N}_3))$ for any $k\geq 2$, obtaining a new proof for Theorem \ref{thm_identities_N3}. We decided to present the previous proof, to show an alternative way to obtain the result.
\end{remark}

\begin{remark}
    Notice that Lemma \ref{permuting} implies that we can replace the products of the simple commutators in Theorem \ref{thm_identities_N3} by the standard polynomials of the same degree. As a consequence, we obtain a unified version of Theorems \ref{thm_identities_N3} and \ref{thm_id_small_deg}, as follows. 
\end{remark}

\begin{theorem}
    If $n \geq 2$ is even \[\mathrm{Id}(F_n(\mathfrak N_3)) = \langle [x_1,x_2,x_3,x_4], s_{n+2}(x_1,\dots,x_{n+2})\rangle^T.\]
    Moreover, $\mathrm{Id}(F_{n+1}(\mathfrak N_3)) = \mathrm{Id}(F_{n}(\mathfrak N_3))$.
\end{theorem}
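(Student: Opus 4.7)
The plan is to deduce this unified statement directly from the previously established Theorem \ref{thm_identities_N3} and Theorem \ref{thm_id_small_deg}, using the classical expansion
\[
s_{2k}(x_1,\dots,x_{2k}) = \frac{1}{2^k}\sum_{\sigma\in S_{2k}}(-1)^\sigma [x_{\sigma(1)},x_{\sigma(2)}]\cdots[x_{\sigma(2k-1)},x_{\sigma(2k)}]
\]
together with Lemma \ref{permuting} as the key bridge between standard polynomials and products of simple commutators modulo $I_4$.

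For the base case $n=2$ I would simply invoke Theorem \ref{thm_id_small_deg}, which already records both the desired description $\mathrm{Id}(F_2(\mathfrak N_3)) = \langle [x_1,x_2,x_3,x_4], s_4\rangle^T$ and the coincidence $\mathrm{Id}(F_3(\mathfrak N_3)) = \mathrm{Id}(F_2(\mathfrak N_3))$, so nothing new needs to be proved there.

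For even $n\geq 4$, the first assertion follows from Theorem \ref{thm_identities_N3}(i), which gives
\[
\mathrm{Id}(F_n(\mathfrak N_3)) = \langle [x_1,x_2,x_3,x_4], [x_1,x_2]\cdots[x_{n+1},x_{n+2}]\rangle^T,
\]
by showing that modulo $I_4$ the standard polynomial $s_{n+2}$ differs from the product $[x_1,x_2]\cdots[x_{n+1},x_{n+2}]$ only by a nonzero scalar factor. Indeed, since $(n+2)/2 \geq 3$, Lemma \ref{permuting} applies and each of the $(n+2)!$ summands in the above expansion of $s_{n+2}$ contributes the same product $[x_1,x_2]\cdots[x_{n+1},x_{n+2}]$ modulo $I_4$, giving
\[
s_{n+2} \equiv \frac{(n+2)!}{2^{(n+2)/2}}\,[x_1,x_2]\cdots[x_{n+1},x_{n+2}] \pmod{I_4}.
\]
Consequently, swapping one generator for the other preserves the T-ideal, establishing the first claim.

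Finally, the second assertion for even $n\geq 4$ follows by applying Theorem \ref{thm_identities_N3}(ii) to the odd integer $n+1\geq 5$, which yields
\[
\mathrm{Id}(F_{n+1}(\mathfrak N_3)) = \langle [x_1,x_2,x_3,x_4], [x_1,x_2]\cdots[x_{n+1},x_{n+2}]\rangle^T,
\]
and this coincides with the basis of $\mathrm{Id}(F_n(\mathfrak N_3))$ produced by part (i) of the same theorem. All ingredients are in place from earlier in the section, so I do not expect any genuine obstacle; the only point demanding care is the exceptional value $n=2$, where $2k=4$ falls below the range $m\geq 3$ required by Lemma \ref{permuting} and must instead be routed through Theorem \ref{thm_id_small_deg} as indicated above.
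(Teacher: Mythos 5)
Your proposal is correct and follows essentially the same route as the paper, which derives this unified statement precisely by combining Theorems \ref{thm_identities_N3} and \ref{thm_id_small_deg} with Lemma \ref{permuting} to trade $[x_1,x_2]\cdots[x_{n+1},x_{n+2}]$ for $s_{n+2}$ modulo $I_4$. Your explicit computation of the nonzero scalar $\frac{(n+2)!}{2^{(n+2)/2}}$ and your careful routing of the case $n=2$ through Theorem \ref{thm_id_small_deg} (where Lemma \ref{permuting} does not apply) merely spell out what the paper leaves as a remark.
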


\section{The variety $\mathfrak{N_4}$} \label{sec_N4}
In this section we consider the variety of Lie nilpotent associative algebras defined by the identity $[x_1, x_2, x_3, x_4, x_5] = 0$. We start with some consequences of this polynomial. The following lemma is a direct corollary of Theorem \ref{thm_products_of_commutators} and Lemma \ref{lemma_even_comms} for $p = 4$.

\begin{lemma}\label{consequencesN4}
    The following identities are consequences of $[x_1,x_2,x_3,x_4,x_5] = 0$.
    \begin{enumerate}[(i)]
        \item $[x_1,x_2,x_3,x_4]x_5 - x_5[x_1,x_2,x_3,x_4] = 0$.
        \item $[[x_1,x_2,x_3],[x_4,x_5]] = 0$.
        \item $[x_1,x_2,x_3][x_4,x_5,x_6] = 0$
        \item $[x_1,x_2][x_3,x_4,x_5,x_6] + [x_6,x_2][x_3,x_4,x_5,x_1] = 0$
        \item $[x_1,x_2][x_3,x_4,x_5,x_1] = 0$ 
    \end{enumerate}
\end{lemma}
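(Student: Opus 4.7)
The plan is to read each of the five identities as a direct specialization of one of the inclusions collected in Theorem \ref{thm_products_of_commutators} or in Lemma \ref{lemma_even_comms}, remembering that in the variety $\mathfrak{N}_4$ the ideal $I_5$ is zero. So the entire proof reduces to choosing parameters and keeping track of signs.

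For (i), I would simply unfold the definition of the iterated commutator: $[x_1,x_2,x_3,x_4,x_5]=[[x_1,x_2,x_3,x_4],x_5]=[x_1,x_2,x_3,x_4]x_5-x_5[x_1,x_2,x_3,x_4]$, so the defining identity of $\mathfrak{N}_4$ is literally (i). For (ii), note that $[x_4,x_5]\in L_2$ and $[x_1,x_2,x_3]\in I_3$, and apply Theorem \ref{thm_products_of_commutators}(iii) with the odd value $p_2=3$; this places $[[x_4,x_5],[x_1,x_2,x_3]]$ inside $L_5\subset I_5$, and antisymmetry of the outer bracket then produces (ii). For (iii), Theorem \ref{thm_products_of_commutators}(iv) with $p_1=p_2=3$ gives $I_3I_3\subset I_5$, which immediately puts $[x_1,x_2,x_3][x_4,x_5,x_6]$ into $I_5$.

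For (iv), I would invoke Lemma \ref{lemma_even_comms} with $p_1=2$, $p_2=4$, $c_1=x_2\in L_1$, $c_2=[x_3,x_4,x_5]\in L_3$, $x=x_1$, $y=x_6$. The lemma yields
\[
[x_2,x_1][x_3,x_4,x_5,x_6]+[x_2,x_6][x_3,x_4,x_5,x_1]\in I_5,
\]
and multiplying by $-1$ and using $[x_2,x_1]=-[x_1,x_2]$, $[x_2,x_6]=-[x_6,x_2]$ rewrites this as (iv). For (v), I would specialise $x_6=x_1$ in (iv); the two summands coincide and produce $2[x_1,x_2][x_3,x_4,x_5,x_1]\in I_5$, which, because $\operatorname{char}K=0$, gives (v). Alternatively, the same identity drops out of Theorem \ref{thm_products_of_commutators}(v) applied with $p_1=2$, $p_2=4$, $c_1=x_2$, $c_2=[x_3,x_4,x_5]$, $z=x_1$.

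I do not expect any real obstacle: the content of the lemma is entirely contained in the already-proved structural inclusions, and the work is only to pick the right parameters and to straighten the signs coming from the antisymmetry $[a,b]=-[b,a]$ that has to be applied to convert the factors delivered by Theorem \ref{thm_products_of_commutators}(iii) and Lemma \ref{lemma_even_comms} into the precise form stated in (ii) and (iv).
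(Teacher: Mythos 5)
Your proposal is correct and matches the paper's approach: the paper justifies this lemma in one line as "a direct corollary of Theorem \ref{thm_products_of_commutators} and Lemma \ref{lemma_even_comms} for $p=4$," and your parameter choices ((i) by unfolding the definition, (ii) from part (iii) of the theorem with $p_1=2$, $p_2=3$, (iii) from $I_3I_3\subset I_5$, (iv) from Lemma \ref{lemma_even_comms} with $p_1=2$, $p_2=4$, and (v) by specialization or from part (v) of the theorem) are exactly the intended instantiations, with the signs handled correctly.
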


To prove our main result we will need other consequences of $[x_1,x_2,x_3,x_4,x_5]$. These will be given in the next Lemma.

\begin{lemma}\label{consequencesN4(2)}  
The following identities are consequences of    $[x_1,x_2,x_3,x_4,x_5] = 0$.
\begin{enumerate}
    \item [(vi)] $[x_1,x_2][x_3,x_4][x_2,x_1,x_1] = 0$.
    \item [(vii)] $[x_1,x_2][x_3,x_4][x_1,x_2,x_3] = 0$.
\end{enumerate}
\end{lemma}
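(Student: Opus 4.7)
The plan is to derive both identities by expanding a conveniently chosen consequence of (v) of Lemma~\ref{consequencesN4} via the Leibniz rule, and then reducing the result modulo $I_5$ using the previously established identities (i)--(v).

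For (vii), the first step will be to specialize (iv) of Lemma~\ref{consequencesN4} by substituting $x_3\mapsto x_1$, $x_4\mapsto x_2$, $x_5\mapsto x_3$, $x_6\mapsto x_4$, obtaining
\[
[x_1,x_2][x_1,x_2,x_3,x_4] + [x_4,x_2][x_1,x_2,x_3,x_1] \equiv 0 \pmod{I_5}.
\]
Next, one expands the 4-fold commutator $[x_1,x_2,x_3,x_4] = [x_1,x_2,x_3]\,x_4 - x_4\,[x_1,x_2,x_3]$ and commutes $[x_1,x_2]$ past $x_4$ via $[x_1,x_2]\,x_4 = x_4[x_1,x_2] + [x_1,x_2,x_4]$, discarding the product $[x_1,x_2,x_4][x_1,x_2,x_3]$ modulo $I_5$ by identity (iii). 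The first summand thus rewrites, modulo $I_5$, as the commutator $[[x_1,x_2][x_1,x_2,x_3],\,x_4]$. One then uses identity (ii) to commute $[x_3,x_4]$ past $[x_1,x_2,x_3]$ modulo $I_5$, yielding
\[
[x_1,x_2][x_3,x_4][x_1,x_2,x_3] \equiv [x_1,x_2][x_1,x_2,x_3][x_3,x_4] \pmod{I_5},
\]
which will be matched to the rearranged first summand, completing (vii).

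For (vi), the analogous plan starts from $[x_1,x_2][x_3,x_4,x_1,x_1] \in I_5$, obtained directly from (v) with $x_5 = x_1$. Expanding $[x_3,x_4,x_1,x_1] = [x_3,x_4]\,x_1^2 - 2x_1[x_3,x_4]\,x_1 + x_1^2[x_3,x_4]$ and left-multiplying by $[x_1,x_2]$, one repeatedly moves each $x_1$ past $[x_1,x_2]$ using $[x_1,x_2]\,x_1 = x_1[x_1,x_2] + [x_1,x_2,x_1]$; emerging products of two triple commutators are discarded by (iii), and 4-fold commutators that appear can be moved past the other factors by the centrality identity (i). The identity $[x_2,x_1,x_1] = -[x_1,x_2,x_1]$ finally exposes (vi).

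The main obstacle will be the combinatorial bookkeeping: each Leibniz step splits a term into several pieces, and tracking which belong to $I_5$ (via (i)--(v), products of two triple commutators, or matchings with (v)), which cancel, and which contribute to (vi) or (vii) is delicate. Especially critical are identity (iii), which eliminates triple--triple products, and identity (ii), which allows free commutation of doubles and triples modulo $I_5$; without them the expansions would remain trapped in $I_4$ rather than descending to $I_5$.
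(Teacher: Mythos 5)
There is a genuine gap here, and it is structural: both target identities are multihomogeneous of total degree $7$, while every relation your plan actually produces has total degree $6$. For (vii) you start from identity (iv) of Lemma \ref{consequencesN4} and then perform only variable renamings, Leibniz expansions of nested commutators, and commutations of factors modulo $I_5$; none of these operations changes the total degree, so you can never reach $[x_1,x_2][x_3,x_4][x_1,x_2,x_3]$, which contains an extra double-commutator factor that is simply absent from $[x_1,x_2][x_1,x_2,x_3,x_4]$. Your final ``matching'' step makes this visible: you propose to match $[x_1,x_2][x_1,x_2,x_3][x_3,x_4]$ (degree $7$) with $[[x_1,x_2][x_1,x_2,x_3],x_4]$ (degree $6$), which is impossible for degree reasons alone. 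Moreover, that rearranged first summand is vacuous: $[x_1,x_2][x_1,x_2,x_3]\in I_2I_3\subseteq I_4$ and $I_4$ is central modulo $I_5$, so $[[x_1,x_2][x_1,x_2,x_3],x_4]$ already lies in $I_5$ and carries no information. The same objection applies to (vi): $[x_1,x_2][x_3,x_4,x_1,x_1]$ is a degree-$6$ element already known to lie in $I_5$ by (v), and expanding it and shuffling the $x_1$'s produces only degree-$6$ byproducts such as $[x_1,x_2,x_1][x_3,x_4]x_1$, never the degree-$7$ product of two double commutators and a triple commutator required by (vi).

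The missing idea is a degree-raising substitution. The paper substitutes $x_4\mapsto x_4x_6$ into identity (v), $[x_1,x_2][x_3,x_4,x_5,x_1]=0$; the Leibniz expansion of $[x_3,x_4x_6,x_5,x_1]$ is exactly what creates the second double-commutator factor, and after discarding terms killed by (ii), (iii), (v) and by $I_3I_3\subseteq I_5$ one obtains the genuine degree-$7$ consequence
\begin{align*}
0\equiv\ & [x_1,x_2][x_6,x_5][x_3,x_4,x_1]+[x_1,x_2][x_3,x_4][x_6,x_5,x_1]\\
&+[x_1,x_2][x_4,x_5][x_3,x_6,x_1]+[x_1,x_2][x_3,x_6][x_4,x_5,x_1] \pmod{I_5},
\end{align*}
from which (vi) and (vii) follow by the specializations $x_6\mapsto x_1$, $x_5\mapsto x_2$ and $x_6\mapsto x_3$, $x_5\mapsto x_2$, respectively. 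Your reduction toolkit --- identities (ii) and (iii), centrality of $I_4$ modulo $I_5$ --- is the right one for cleaning up such an expansion, but without first raising the degree there is nothing of the right shape to clean up.
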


\begin{proof}
    
    In identity $(v)$ of Lemma \ref{consequencesN4} we replace $x_4$ by $x_4x_6$. Then using the identity $[ab,c] = a[b,c]+[a,c]b$, and identities (ii), (iii), and (v) we obtain the following consequence

    \begin{align}\label{conseqdeg7}
        0  = & [x_1,x_2][x_6,x_5][x_3,x_4,x_1] + [x_1,x_2][x_3,x_4][x_6,x_5,x_1] \\ + \nonumber & [x_1,x_2][x_4,x_5][x_3,x_6,x_1] + [x_1,x_2][x_3,x_6][x_4,x_5,x_1] 
    \end{align}
    Now we replace $x_6 $ by $x_1$ and use identity (iii) of the above lemma ($I_3I_3\subseteq I_5$) to obtain
    \begin{align*}
        0 = [x_1,x_2][x_3,x_4][x_5,x_1,x_1] + [x_1,x_2][x_5,x_4][x_3,x_1,x_1]
    \end{align*}
    Finally, we replace $x_5$ by $x_2$ and obtain
    \begin{align*}
        [x_1,x_2][x_3,x_4][x_2,x_1,x_1] = 0
    \end{align*}
    This is identity (vi) above.

    To prove identity (vii), we replace in identity (\ref{conseqdeg7}) $x_6$ by $x_3$. Again, using identities from Lemma \ref{consequencesN4}, we obtain
    \begin{align*}
        0 = [x_1,x_2][x_3,x_5][x_3,x_4,x_1] + [x_1,x_2][x_3,x_4][x_3,x_5,x_1] = 0.
    \end{align*}
    Finally, replacing $x_5$ by $x_2$, and using identity (iii) of Lemma \ref{consequencesN4}, we obtain
    \begin{align*}
        [x_1,x_2][x_3,x_4][x_3,x_2,x_1] = 0
    \end{align*}
    The Lemma is proved.
\end{proof}

The main goal of this section is to prove the following statement.

\begin{theorem} \label{thm_PI} Consider the algebra $F_n(\mathfrak{N}_4)$, which is the relatively free algebra of rank $n$ in the variety satisfying the polynomial identity $[x_1, \dots, x_5]$.
\begin{itemize}
\item[(i)]
Let $n \geq 5$ be an odd integer. Then the $T$-ideal of polynomial identities of $F_n(\mathfrak{N}_4)$ is generated by
\begin{enumerate}
\item $[x_1, x_2, x_3, x_4, x_5]$,
\item $ [x_1, x_2] \cdots [x_n, x_{n+1}][x_{n+2}, x_{n+3}]$,
\item $\sum_{\sigma \in S_{n+1}}(-1)^{\sigma} [x_{\sigma(1)}, x_{\sigma(2)}]\cdots [x_{\sigma(n-2)}, x_{\sigma(n-1)}][x_{\sigma(n)}, x_{\sigma(n+1)}, x_1]$.
\end{enumerate}
Another equivalent set of generators for the $T$-ideal of identities of $F_n(\mathfrak{N}_4)$ is given by replacing the polynomial $[x_1, x_2] \cdots [x_n, x_{n+1}][x_{n+2}, x_{n+3}]$ by the standard polynomial $s_{n+3} = \sum_{\sigma\in S_{n+3}}(-1)^\sigma x_{\sigma(1)}\cdots x_{\sigma(n+3)}$.
\item[(ii)] Similarly, if $n\geq 4$ is an even number, then the $T$-ideal of polynomial identities of $F_n(\mathfrak{N}_4)$ is generated by
\begin{enumerate}
    \item $[x_1, x_2, x_3, x_4, x_5]$, 
    \item $s_{n+2}(x_1, \dots, x_{n+2}) = \sum_{\sigma\in S_{n+2}}(-1)^\sigma x_{\sigma(1)}\cdots x_{\sigma(n+2)}$
    \item $\sum_{\sigma \in S_{n+1}}(-1)^{\sigma} [x_{\sigma(1)}, x_{\sigma(2)}]\cdots [x_{\sigma(n-3)}, x_{\sigma(n-2)}][[x_{\sigma(n-1)}, x_{\sigma(n)}],[x_{\sigma(n+1)}, x_1]]$. 
\end{enumerate}
\end{itemize}
\end{theorem}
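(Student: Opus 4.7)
The plan is to follow the blueprint of Theorems \ref{thm_identities_N3} and \ref{thm_id_small_deg}: first establish that the listed polynomials are genuine identities of $F_n(\mathfrak{N}_4)$, and second prove that any polynomial identity of $F_n(\mathfrak{N}_4)$ is a consequence of them. Both the odd-$n$ and even-$n$ cases can be attacked in parallel; the parity only affects which closing generator (a triple commutator $[x_{\sigma(n)}, x_{\sigma(n+1)}, x_1]$ in the odd case vs.\ a nested commutator $[[x_{\sigma(n-1)}, x_{\sigma(n)}], [x_{\sigma(n+1)}, x_1]]$ in the even case) is needed and the corresponding length of the alternating sum.

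For the first direction I would verify each generator in turn. Identity (1) is inherited from the ambient variety $\mathfrak{N}_4$. For identity (2) (the long product of commutators in the odd case, the standard polynomial $s_{n+2}$ in the even case), the approach mirrors Propositions \ref{identity} and \ref{standard4}: evaluate on arbitrary monomials in $x_1, \dots, x_n$, apply Lemma \ref{CommutatorOfMonomials} to push single variables out of each commutator, and invoke the identities of Lemmas \ref{consequencesN4} and \ref{consequencesN4(2)} so that, modulo $I_5$, one is left with products of simple commutators whose inner entries lie in $\{x_1, \dots, x_n\}$. Since the number of commutator slots strictly exceeds $n$, some inner variable must repeat, and then an $\mathfrak{N}_4$-analog of Lemma \ref{repeated}---proved using identity (v) of Lemma \ref{consequencesN4} together with (vi) and (vii) of Lemma \ref{consequencesN4(2)}---kills the whole product modulo $I_5$. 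The equivalence of the product and standard-polynomial formulations in the odd case follows by expanding $s_{n+3}$ as a signed sum of products of commutators and reducing modulo $I_5$. Identity (3) is handled similarly, but using antisymmetry at the end: after extracting single variables and rearranging modulo $I_5$ (aided by Theorem \ref{thm_products_of_commutators}~(iii) and Lemma \ref{lemma_even_comms}), the sum becomes an antisymmetric sum over $S_{n+1}$ whose inner entries are drawn from $\{x_1, \dots, x_n\}$, so pigeonhole forces a repetition and the sum collapses.

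For the reverse direction, take an arbitrary proper multilinear identity $f \in \Gamma_m$ of $F_n(\mathfrak{N}_4)$. Lemma \ref{nvariables} handles degrees $m \leq n$, where $f$ already lies in $I_5$. For $m \geq n+1$ the plan is to use the $S_m$-module decomposition of $\Gamma_m(\mathfrak{N}_4)$ obtained in \cite{Stoyanova1984(N4)}, which realizes $\Gamma_m(\mathfrak{N}_4)$ as an explicit finite direct sum of irreducible $S_m$-submodules, each generated by a named proper multilinear polynomial. For every such generator $g$ one verifies exactly one of two alternatives: either $g$ is \emph{not} an identity of $F_n(\mathfrak{N}_4)$ (in which case Lemma \ref{nvariables} forces the $g$-isotypic component of $f$ to be zero, since typically $g$ uses at most $n$ distinct variables), or $g$ \emph{is} an identity and one exhibits it as a consequence of generators (1)--(3). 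The dichotomy between odd and even $n$ in the statement reflects which irreducible shapes become critical, and in particular the fact that Theorem \ref{thm_products_of_commutators}~(iv) is available only when at least one factor is odd.

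The main obstacle is expected to be this completeness step---the bookkeeping against Stoyanova's decomposition. Unlike the $\mathfrak{N}_3$ case (Proposition \ref{Volichenko}), where the proper multilinear module is essentially one-dimensional in each high degree, in $\mathfrak{N}_4$ several persistent irreducible summands appear in every degree, and for each of them one must produce an explicit derivation of the generator from (1)--(3). The role of Theorem \ref{thm_products_of_commutators}~(v) and Lemma \ref{lemma_even_comms}---both available only when both commutator factors have even length---is precisely what forces the nested commutator $[[x,y],[z,w]]$ into identity (3) in the even case rather than a triple commutator $[x,y,z]$ as in the odd case; matching this symmetry to the Young-diagram data of Stoyanova's decomposition is the delicate combinatorial heart of the argument.
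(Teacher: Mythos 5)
Your proposal follows essentially the same route as the paper: verify the generators are identities by evaluating on monomials, extracting variables with Lemma \ref{CommutatorOfMonomials}, and reducing modulo $I_5$ until a repetition forces vanishing; then, for completeness, run through the Stoyanova--Venkova decomposition of $\Gamma_m(\mathfrak{N}_4)$, showing each irreducible generator is either a non-identity (via Lemma \ref{nvariables} and explicit evaluations in $E\otimes E_2$) or a consequence of the listed generators. This is exactly the paper's architecture for both parities.

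One point in your sketch is false as literally stated and hides the hardest step. You claim that after reduction ``some inner variable must repeat, and then an $\mathfrak{N}_4$-analog of Lemma \ref{repeated} kills the whole product modulo $I_5$.'' A single repeated variable does \emph{not} kill a product of double commutators modulo $I_5$: the paper's Lemma \ref{lemma_not_identity1} exhibits $[x_1,x_2][x_1,x_3][x_4,x_5]\cdots[x_{n-1},x_n]$ as nonzero in $E\otimes E_2$, hence not in $I_5$. What actually makes generator (2) an identity is that $n+3$ commutator slots filled from $n$ variables force \emph{three} repetitions, and the paper's Lemma \ref{lemma_I5} then needs a genuine case analysis (splitting the product into two factors each lying in $I_3$, versus the entangled configurations where the repeated commutators must first be moved adjacent to each other using centrality of $I_4$ modulo $I_5$ and Pchelintsev's three-variable result). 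Your plan would need to replace the one-repetition claim with this three-repetition argument; with that correction, the rest of your outline matches the paper's proof.
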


To prove this theorem we use a similar approach to that of \cite{GdeM2015} and as in the proof of Theorem \ref{thm_id_small_deg}. First, we notice that the $S_m$-module structure of $\Gamma_m(\mathfrak{N}_4)$ was determined by Stoyanova-Venkova in the paper \cite{Stoyanova1984(N4)}. Here we recall her result.

First, we define the following polynomials.
\begin{align*}
&g_1^{(2k-1)} = \sum_{\sigma \in S_{2k-3}}(-1)^{\sigma} [x_{\sigma(1)}, x_{\sigma(2)}]\cdots [x_{\sigma(2k-5)}, x_{\sigma(2k-4)}][x_{\sigma(2k-3)}, x_1, x_1]; \\
&g_2^{(2k-1)} = \sum_{\sigma \in S_{2k-3}}(-1)^{\sigma} [x_{\sigma(1)}, x_{\sigma(2)}]\cdots [x_{\sigma(2k-5)}, x_{\sigma(2k-4)}][x_2, x_1, x_{\sigma(2k-3)}]; \\
&g_3^{(2k-1)} = \sum_{\sigma \in S_{2k-2}}(-1)^{\sigma} [x_{\sigma(1)}, x_{\sigma(2)}]\cdots [x_{\sigma(2k-5)}, x_{\sigma(2k-4)}][x_{\sigma(2k-3)}, x_{\sigma(2k-2)}, x_1]; \\
&g_0^{(5)} = [x_2,x_1][x_2, x_1, x_1];
\end{align*}

\begin{align*}
&g_1^{(2k)} = \sum_{\sigma \in S_{2k-2}}(-1)^{\sigma} [x_{\sigma(1)}, x_{\sigma(2)}]\cdots [x_{\sigma(2k-5)}, x_{\sigma(2k-4)}][x_{\sigma(2k-3)}, x_{\sigma(2k-2)}, x_1, x_1]; \\
&g_2^{(2k)} = [x_1, x_2]s_{2k-2}(x_1, x_2, \dots, x_{2k-2}); \\
&g_3^{(2k)} = \sum_{\sigma \in S_{2k-1}}(-1)^{\sigma} [x_{\sigma(1)}, x_{\sigma(2)}]\cdots [x_{\sigma(2k-5)}, x_{\sigma(2k-4)}][x_{\sigma(2k-3)}, x_{\sigma(2k-2)}, [x_{\sigma(2k-1)}, x_1]]; \\
&g_0^{(6)} = [x_1,x_2]^3;
\end{align*}

Let $f_i^{(j)}$ denote the complete multilinearization of $g_i^{(j)}$.
Then each $f_i^{(j)}$ defines an irreducible $S_j$-module. The following theorem can be found in \cite{Stoyanova1984(N4)}.

\begin{theorem} \cite{Stoyanova1984(N4)} \label{thm_Stoyanova-Venkova}
$\Gamma_m(\mathfrak{N}_4)$ decomposes into a direct sum of irreducible $S_m$-modules in the following way:
\begin{align*}
&\Gamma_2(\mathfrak{N}_4) = \Gamma_2 = Ks_2(x_1, x_2); \\
&\Gamma_3(\mathfrak{N_4}) = \Gamma_3 = K S_3 f_3^{(3)};\\
&\Gamma_4(\mathfrak{N}_4) = \Gamma_4 = K S_4 f_1^{(4)} + K S_4 f_2^{(4)} + K S_4 f_3^{(4)} + K s_4(x_1, \dots, x_4);\\
&\Gamma_5(\mathfrak{N}_4) = K S_5 f_0^{(5)} + K S_5 f_1^{(5)} + K S_5 f_2^{(5)} + K S_5 f_3^{(5)};\\
&\Gamma_6(\mathfrak{N}_4)  =K S_6 f_0^{(6)} + K S_6 f_1^{(6)} + K S_6 f_2^{(6)} + K S_6 f_3^{(6)} + K s_6(x_1, \dots, x_6);
\end{align*}
For $k \geq 4$ we have
\begin{align*}
 &\Gamma_{2k-1}(\mathfrak{N}_4) =  K S_{2k-1} f_1^{(2k-1)} + K S_{2k-1} f_2^{(2k-1)} + K S_{2k-1} f_3^{(2k-1)};\\
&\Gamma_{2k}(\mathfrak{N}_4) = K S_{2k} f_1^{(2k)} + K S_{2k} f_2^{(2k)} + K S_{2k} f_3^{(2k)} + K s_{2k}(x_1, \dots, x_{2k});   
\end{align*}
\end{theorem}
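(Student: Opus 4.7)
Since Theorem \ref{thm_Stoyanova-Venkova} is quoted from \cite{Stoyanova1984(N4)}, the plan is to sketch a reproof following the template the authors use in their sketch of Proposition \ref{Volichenko}: first produce a linear spanning set for $\Gamma_m(\mathfrak{N}_4)$ using the commutator manipulations available modulo $I_5$; then recognize each spanning polynomial as generating a specific irreducible $S_m$-Specht module from its partial (anti)symmetries; finally verify non-isomorphism and non-vanishing. The small cases $m = 2, 3, 4$ are immediate from Lemma \ref{nvariables}, which forces $\Gamma_m(\mathfrak{N}_4) = \Gamma_m$ for $m \le 4$, so the decomposition coincides with the known $S_m$-structure of the free proper polynomials.

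For $m \ge 5$, the core task is the spanning step. Iterating Theorem \ref{thm_products_of_commutators}(i) gives $L_k \subseteq L_5 \subseteq I_5$ for every $k \ge 5$, so only pure commutators of length $2, 3, 4$ survive modulo $I_5$. Part (iv) of that theorem gives $I_3 I_3 \subseteq I_5$, and part (ii) combined with $L_6 \subseteq L_5$ gives $I_4 I_4 \subseteq I_5$; hence any surviving product of pure commutators contains at most one commutator of length $\ge 3$. The containments $[L_2, L_3] \subseteq L_5$ and $[L_2, L_4] \subseteq L_6 \subseteq L_5$ allow that long commutator to be moved to the far right modulo $I_5$, reducing the span of $\Gamma_m(\mathfrak{N}_4)$ to $S_m$-translates of monomials of the form $[x_{i_1}, x_{i_2}] \cdots [x_{i_{2t-1}}, x_{i_{2t}}] \cdot c$ with tail $c \in \{1, [\cdot, \cdot, \cdot], [\cdot, \cdot, \cdot, \cdot]\}$. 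Lemmas \ref{consequencesN4} and \ref{consequencesN4(2)} are then applied systematically to collapse each such monomial onto the $S_m$-orbit of the explicit generators $g_i^{(j)}$ (and of $s_m$ in even degree): these identities are precisely the symmetrizations that cut the space down to the stated shapes.

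Given the spanning set, each $f_i^{(j)}$ is the complete multilinearization of a polynomial with a specific pattern of antisymmetry within each length-$2$ pair and residual symmetry in $x_1$ inside the tail, which pins down a unique Young diagram and hence a unique irreducible $S_m$-Specht module; pairwise non-isomorphism follows from the distinctness of the associated partitions. Non-vanishing of each $f_i^{(j)}$ modulo $\mathrm{Id}(\mathfrak{N}_4)$ is checked by evaluation on concrete algebras in $\mathfrak{N}_4$, such as the algebras $E \otimes E_{2}$ supplied by Proposition \ref{prop_DeryabinaKrasilnikov} or larger tensor products of Grassmann-type algebras. The main obstacle is the bookkeeping in the spanning reduction for $m \ge 5$: one must verify that no further irreducible components are hidden inside the products carrying a length-$3$ or length-$4$ tail, which requires carefully tracking how identities (v)--(vii) interact on a generic such monomial and confirming completeness of the resulting catalog of shapes.
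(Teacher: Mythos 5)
First, note that the paper does not prove this statement at all: it is imported verbatim from \cite{Stoyanova1984(N4)} (only the much easier Corollary \ref{Stoyanova-Venkova_corollary} gets a one-line argument), so there is no in-paper proof to compare your attempt against. Judged on its own, your outline has the right architecture and its individual reductions are correct: $I_3I_3, I_3I_4, I_4I_4 \subseteq I_5$ do force at most one commutator of length $\geq 3$ in a surviving product, $[L_2,L_3]\subseteq L_5$ and $[L_2,L_4]\subseteq L_6\subseteq L_5$ do let you push that commutator to the right, the partitions attached to $f_1^{(j)}, f_2^{(j)}, f_3^{(j)}, s_j$ are pairwise distinct, and non-vanishing via evaluation on $E\otimes E_2$ and $E_2\otimes E_2\otimes E_2$ is exactly how the paper's Corollary \ref{Stoyanova-Venkova_corollary} is justified.

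The genuine gap is that the step you defer to ``bookkeeping'' is not bookkeeping --- it is the theorem. After your reduction, $\Gamma_m(\mathfrak{N}_4)$ is spanned by products of double commutators with an optional tail of length $3$ or $4$, a space whose $S_m$-decomposition a priori contains many more irreducibles than the three (or four, or five) listed. Showing that everything collapses onto the orbits of $g_1^{(j)}, g_2^{(j)}, g_3^{(j)}$ (plus $s_m$ in even degree, plus the exceptional $g_0^{(5)}, g_0^{(6)}$ which your catalog for $m\geq 5$ does not explicitly account for) requires producing enough relations to kill every other isotypic component --- for instance, one must explain why the degree-$7$ and degree-$8$ analogues of $g_0^{(5)}$ and $g_0^{(6)}$, such as $[x_1,x_2][x_3,x_4][x_2,x_1,x_1]$, vanish (this is precisely the content of Lemma \ref{consequencesN4(2)}, which is proved by ad hoc substitutions, not by a systematic procedure one could ``apply systematically''). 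Without an upper bound on the multiplicities matching the exhibited lower bound, the decomposition is not established. Two smaller points: your appeal to Lemma \ref{nvariables} for $\Gamma_m(\mathfrak{N}_4)=\Gamma_m$ when $m\leq 4$ is a misattribution (the correct reason is that the T-ideal $I_5$ contains no nonzero element of degree less than $5$); and ``a specific pattern of antisymmetry pins down a unique Specht module'' needs the standard highest-weight-vector argument to guarantee that each $KS_mf_i^{(j)}$ is actually irreducible rather than merely supported on a known set of partitions.
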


\begin{coro}[\cite{Stoyanova1984(N4)}]\label{Stoyanova-Venkova_corollary}
    The variety $\mathfrak{N}_4$ is generated by the algebras $E\otimes E_2$ and $E_2\otimes E_2\otimes E_2$.
\end{coro}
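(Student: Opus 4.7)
The plan is to establish the equality $\mathrm{var}(E\otimes E_2,\, E_2\otimes E_2\otimes E_2) = \mathfrak{N}_4$ by verifying both inclusions. The forward inclusion $\mathrm{var}(E\otimes E_2,\, E_2\otimes E_2\otimes E_2) \subseteq \mathfrak{N}_4$ amounts to checking that both tensor-product algebras satisfy $[x_1,\dots,x_5]=0$. For $E\otimes E_2$, this is immediate from Proposition \ref{prop_DeryabinaKrasilnikov} applied with $k=1$. For $E_2\otimes E_2\otimes E_2$, I would argue directly: each tensor factor $E_2$ is $\mathbb{Z}_2$-graded and satisfies $[x_1,x_2,x_3]=0$, so expanding a fivefold commutator of pure tensors and tracking how the commutator distributes across the three factors forces at least one factor to contain a triple commutator, which vanishes.

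For the reverse inclusion $\mathfrak{N}_4 \subseteq \mathrm{var}(E\otimes E_2,\, E_2\otimes E_2\otimes E_2)$, it suffices, since $\operatorname{char} K = 0$, to show that every proper multilinear polynomial identity common to both algebras is a consequence of $[x_1,\dots,x_5]$. Here I would invoke Theorem \ref{thm_Stoyanova-Venkova}, which decomposes $\Gamma_m(\mathfrak{N}_4)$ as a finite direct sum of pairwise non-isomorphic irreducible $S_m$-submodules, each generated by one of the explicit polynomials $f_i^{(m)}$ or a standard polynomial $s_{2k}$. For any algebra $A$, the intersection $\Gamma_m\cap\mathrm{Id}(A)$ is an $S_m$-submodule, and by Schur's lemma (together with the pairwise non-isomorphism of the summands) any submodule of $\Gamma_m(\mathfrak{N}_4)$ is a direct sum of a subcollection of the summands. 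Consequently, an irreducible summand lies in $\mathrm{Id}(E\otimes E_2)\cap \mathrm{Id}(E_2\otimes E_2\otimes E_2)$ if and only if its generator does; the task therefore reduces to showing that each generator listed in Theorem \ref{thm_Stoyanova-Venkova} fails to be an identity for at least one of the two algebras.

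The main obstacle is producing these non-identity evaluations uniformly for the infinite families $f_1^{(m)}, f_2^{(m)}, f_3^{(m)}$ and the standard polynomials $s_{2k}$. The approach is to substitute suitably chosen odd elements of the Grassmann factors for the variables. On $E\otimes E_2$, one sets $x_i\mapsto e_i\otimes u_i$ with pairwise distinct odd generators $e_i\in E$ and $u_i\in\{1, f_1, f_2\}$ chosen to match the commutator pattern of the generator being tested; each commutator $[x_i,x_j]$ then reduces to a product in the exterior algebra whose nonvanishing is read off by tracking signs. On $E_2\otimes E_2\otimes E_2$ one proceeds analogously, distributing odd generators across the three tensor factors, which supplies enough independent ``odd directions'' to detect the standard polynomials $s_{2k}$ and the product-type generators such as $g_2^{(2k)} = [x_1,x_2]\,s_{2k-2}$ and $g_0^{(6)}=[x_1,x_2]^3$. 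The delicate point is that neither algebra alone detects every generator, so one must partition the list from Theorem \ref{thm_Stoyanova-Venkova} between the two algebras according to the parity and combinatorial shape of each $f_i^{(m)}$, ensuring every summand of $\Gamma_m(\mathfrak{N}_4)$ is covered.
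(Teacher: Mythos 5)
Your proposal takes essentially the same route as the paper, whose proof consists exactly of the reduction you describe: by the $S_m$-module decomposition of $\Gamma_m(\mathfrak{N}_4)$ in Theorem \ref{thm_Stoyanova-Venkova}, one only has to verify that each listed generator fails to be an identity of one of the two algebras (the paper assigns $g_1^{(k)},g_2^{(k)},g_3^{(k)}$ to $E\otimes E_2$ and $g_0^{(5)},g_0^{(6)}$ to $E_2\otimes E_2\otimes E_2$). The one point to be careful about is your justification that $E_2\otimes E_2\otimes E_2$ satisfies $[x_1,\dots,x_5]=0$: a commutator $[a\otimes b,c\otimes d]$ does not simply distribute into commutators in the separate tensor factors, so that step needs an honest superalgebra computation or a citation, just as the corresponding fact for $E\otimes E_2$ is imported from Proposition \ref{prop_DeryabinaKrasilnikov}.
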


\begin{proof}
    One needs to check that the generators $g_1^{(k)}, g_2^{(k)}, g_3^{(k)}$ of $\Gamma_m(\mathfrak{N}_4)$ given in Theorem \ref{thm_Stoyanova-Venkova} are not identities for $E\otimes E_2$ and $g_0^{(5)}$ and $g_0^{(6)}$ are not identities for $E_2\otimes E_2\otimes E_2$.
\end{proof}

Now, to prove Theorem \ref{thm_PI} we will consider separately the cases of odd and even $n$.

\subsection{The case of odd $n\geq 5$}

We first consider the case where $n$ is an odd integer greater than or equal to $5$. In the following propositions, we give several polynomials, which are polynomial identities for $F_n(\mathfrak{N}_4)$. 

\begin{lemma} \label{lemma_I5} Let $n \geq 5$ be an odd integer. Whenever $i_1, \dots, i_{n+3} \in \{1, \dots, n\}$, then
\[
[x_{i_1}, x_{i_2}] \cdots [x_{i_{n+2}}, x_{i_{n+3}}] \in I_5.
\]
\end{lemma}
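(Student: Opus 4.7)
The claim is that any product of $(n+3)/2$ double commutators whose indices lie in $\{1,\dots,n\}$ belongs to $I_5$. Since $n+3>n$, the pigeonhole principle forces some value $j\in\{1,\dots,n\}$ to occur at least twice among $i_1,\dots,i_{n+3}$; in fact the total excess of occurrences over distinct labels is at least $3$. If both occurrences of such a $j$ fall inside the same factor $[x_{i_{2t-1}},x_{i_{2t}}]$, that factor is $[x_j,x_j]=0$ and the whole product is already zero, so I may assume the two occurrences lie in distinct factors.

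My plan is an induction on $n$ (odd, $n\geq 5$). The main tool is the following cluster of ``commutativity modulo $I_5$'' facts: (a) every $4$-fold commutator is central modulo $I_5$, directly from $[x_1,\dots,x_5]=0$; (b) every triple commutator commutes with every double commutator modulo $I_5$, by Lemma \ref{consequencesN4}(ii); (c) the Jacobi identity writes $[[x_a,x_b],[x_c,x_d]]$ as an element of $L_4$, which by (a) is central. Combining (a)--(c), one can interchange two adjacent factors of $P$ at the cost of a correction term in which one of the double commutators is replaced by a $4$-fold commutator that can then be moved freely. Using such swaps, I bring the two factors containing the repeated $x_j$ into adjacent positions.

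Once adjacent, the sub-product has the shape $[x_j,x_a][x_j,x_b]\cdot(\cdots)$ or $[x_j,x_a][x_b,x_j]\cdot(\cdots)$. Using the specific consequences Lemma \ref{consequencesN4}(v) (applied to a sub-product in which one $2$-fold factor has already been ``upgraded'' to a $3$- or $4$-fold one during the rearrangement) and Lemma \ref{consequencesN4(2)}(vi)--(vii), together with $I_3 I_3\subset I_5$, I annihilate the sub-product modulo $I_5$. Since $I_5$ is a two-sided ideal, multiplication by the remaining factors on either side preserves the conclusion and the whole of $P$ lies in $I_5$.

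\textbf{Main obstacle.} The delicate point is controlling the residuals produced by the rearrangement in step (c): each swap introduces a product in which one factor is a $4$-fold commutator instead of a $2$-fold one, and one must verify that such a mixed-length product is also annihilated modulo $I_5$. The cleanest way to deal with this is to prove, jointly with Lemma \ref{lemma_I5}, a slightly stronger version in which some factors are allowed to be commutators of length $3$ or $4$, and to arrange the induction so that every residual has strictly fewer ``free'' slots available to the repeated variable $x_j$. The base case $n=5$ (only $4$ factors, at most $5$ distinct labels) can then be handled by a direct case analysis on the distribution of label multiplicities $(4,1,1,1,1)$, $(3,2,1,1,1)$, $(2,2,2,1,1)$, etc., each of which reduces to a single invocation of (v), (vi), or (vii).
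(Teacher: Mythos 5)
Your proposal shares the paper's general strategy (rearrange the double commutators modulo $I_5$ using the centrality of length-$4$ commutators, then exploit the repeated indices), but as written it has a genuine gap at its core. Bringing the two occurrences of a single repeated index $j$ into adjacent factors only yields a block $[x_j,x_a][x_j,x_b]\in I_3$, and multiplying this by the remaining double commutators gives at best $I_3I_2\cdots\subset I_4$, not $I_5$. The identities you invoke to ``annihilate the sub-product'' --- Lemma \ref{consequencesN4}(v) and Lemma \ref{consequencesN4(2)}(vi)--(vii) --- all contain a commutator of length $\geq 3$ and so do not apply to a product of pure double commutators. What is actually needed, and what the paper's proof is organized around, is to use the full excess of three repetitions to produce \emph{two disjoint} sub-blocks each lying in $I_3$, so that $h\in I_3I_3\subset I_5$. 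The combinatorial obstruction to doing this is not the multiplicity distribution of the labels (your proposed base-case analysis) but how the repeats are spread among the factors: the critical configuration is the ``triangle'' $[x_a,x_b]\cdots[x_a,x_c]\cdots[x_b,x_c]$, which cannot be split into two repeating blocks and which the paper breaks using the relation $[x_{i_2},x_{i_3}][x_{i_4},x_{i_5}]\equiv_{I_3}[x_{i_2},x_{i_4}][x_{i_5},x_{i_3}]$ together with the hypothesis that there are at least four factors ($n\geq 5$). Your plan never identifies this case.

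The second gap is the one you yourself flag as the ``main obstacle'' but do not resolve. Each swap produces a residual in which one double commutator is replaced by a length-$4$ commutator; such a residual lies in $I_2^k I_4$, which is \emph{not} contained in $I_5$ in general. The paper disposes of these residuals by a dichotomy: either the surviving double commutators still contain a repeat (giving $I_3I_4\subset I_6$), or all three repeated indices sit inside the length-$4$ commutator, in which case the offending factor $[y_1,y_2][y_3,y_4,y_5,y_6]$ involves at most three distinct variables and Pchelintsev's theorem (Theorem \ref{thm_products_of_commutators}(vi), $I_2I_4\cap K\langle x_1,x_2,x_3\rangle\subset I_5$) is needed to conclude. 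Your sketch of ``a slightly stronger version with fewer free slots'' does not substitute for this: some input beyond the generic inclusions $I_{p_1}I_{p_2}\subset I_{p_1+p_2-2}$ is required precisely at this point, and your proposal supplies none.
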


\begin{proof}
Let us denote $h = [x_{i_1}, x_{i_2}][x_{i_3}, x_{i_4}] \cdots [x_{i_{n+2}}, x_{i_{n+3}}]$. Then, there are three repeating variables in $h$  (since $i_1, \dots, i_{n+3} \in \{1,2 \dots, n\}$). We consider two cases.

{\bf Case 1.} The first case is when we can write $h$ in the form $h = h_1h_2$ such that both $h_1$ and $h_2$ are products of double commutators with at least one repeating index. Hence, $h_1 \in I_3$ and $h_2 \in I_3$, which implies that $h \in I_5$.

{\bf Case 2.} The second case is when $h$ cannot be written in the above form. This case splits further into two subcases:

{\bf Case 2.1.} The first case is when $h$ has at least two pairs of commutators with repeating indices in them. In other words, $h$ is of the form
\[
h = s_1[x_{i_1}, x_{i_2}]s_2[x_{i_3}, x_{i_4}]s_3[x_{i_1}, x_{i_5}]s_4[x_{i_3}, x_{i_6}]s_5,
\]
where $s_i$ is a product of double commutators for each $i$. We will first show that up to an element of $I_5$ we can move the commutator $[x_{i_1}, x_{i_5}]$ to the left of all commutators from $s_3$. 
Indeed, let $s_3 = s_3' [y_1, y_2]$.
Then,
\begin{align*}
&h = s_1[x_{i_1}, x_{i_2}]s_2[x_{i_3}, x_{i_4}]s_3'[x_{i_1}, x_{i_5}][y_1, y_2]s_4[x_{i_3}, x_{i_6}]s_5 + \\
&s_1[x_{i_1}, x_{i_2}]s_2[x_{i_3}, x_{i_4}]s_3'[y_1, y_2, x_{i_1}, x_{i_5}]s_4[x_{i_3}, x_{i_6}]s_5 = h_1 + h_2.
\end{align*}

Since commutators of length $4$ are central in $I_5$, we have
\[
h_2 \equiv_{I_5}s_1[x_{i_1}, x_{i_2}]s_2[x_{i_3}, x_{i_4}]s_3's_4[x_{i_3}, x_{i_6}]s_5[y_1, y_2, x_{i_1}, x_{i_5}] \in I_3I_4 \subset I_6.
\]
As before, the notation $\equiv_{I_5}$ means equivalence modulo $I_5$. Therefore, we showed that $h \equiv_{I_5} h_1$.
We continue to move the commutator $[x_{i_1}, x_{i_5}]$ to the left, so that we obtain 
\[
h \equiv_{I_5}s_1[x_{i_1}, x_{i_2}]s_2[x_{i_3}, x_{i_4}][x_{i_1}, x_{i_5}]s_3s_4[x_{i_3}, x_{i_6}]s_5.
\]
Then, we will show that up to an element of $I_5$ we can also switch the places of $[x_{i_1}, x_{i_5}]$ and $[x_{i_3}, x_{i_4}]$. We have

\begin{align*}
h \equiv_{I_5}s_1[x_{i_1}, x_{i_2}]s_2[x_{i_1}, x_{i_5}][x_{i_3}, x_{i_4}]s_3s_4[x_{i_3}, x_{i_6}]s_5 + \\
s_1[x_{i_1}, x_{i_2}]s_2s_3s_4[x_{i_3}, x_{i_6}]s_5[x_{i_3}, x_{i_4},x_{i_1}, x_{i_5}] = h' + h''.
\end{align*}

If there are repeating variables in $s_1[x_{i_1}, x_{i_2}]s_2s_3s_4[x_{i_3}, x_{i_6}]s_5$, then $h'' \in I_3I_4 \subset I_6$ and $h \equiv_{I_5} h'$. If there are no repeating variables in $s_1[x_{i_1}, x_{i_2}]s_2s_3s_4[x_{i_3}, x_{i_6}]s_5$, then all the three repeating indices appear in the commutator of length $4$ and without loss of generality
\[
h'' \equiv_{I_5} s_1[x_{i_1}, x_{i_2}]s_2s_3s_4s_5[y_1, y_2][y_3, y_4,y_5,y_6],
\]
where $y_1, \dots, y_6$ are at most three different variables from the set $\{x_1, \dots, x_n\}$.
By Theorem \ref{thm_products_of_commutators} (vi), $[y_1, y_2][y_3, y_4,y_5,y_6] \in  I_5$ and thus $h'' \in I_5$.

In short, we showed that 
\[
h \equiv_{I_5}s_1[x_{i_1}, x_{i_2}]s_2[x_{i_1}, x_{i_5}][x_{i_3}, x_{i_4}]s_3s_4[x_{i_3}, x_{i_6}]s_5.
\]

Hence, we reduce this case to Case 1 above.

{\bf Case 2.2.} The second case is when $h$ has three repeating indices in a product of three commutators. In other words, $h$ is of the form
\[
h = s_0[x_{i_1}, x_{i_2}]s_1[x_{i_1}, x_{i_3}]s_2[x_{i_2},x_{i_3}][x_{i_4}, x_{i_5}]s_3,
\]
where again $s_i$ is a product of double commutators. Here we use the requirement that $n \geq 5$, i.e., $n+3 \geq 8$ and $f$ has at least $4$ commutators.

We use that $[x_{i_2}, x_{i_3}][x_{i_4},x_{i_5}] \equiv_{I_3} [x_{i_2}, x_{i_4}][x_{i_5},x_{i_3}]$. Hence,
\[
h \equiv_{i_5} s_0[x_{i_1}, x_{i_2}]s_1[x_{i_1}, x_{i_3}]s_2[x_{i_2}, x_{i_4}][x_{i_5},x_{i_3}]s_3,
\]
and this case is reduced to Case 2.1. This completes the proof of the lemma.
\end{proof}

Lemma \ref{lemma_I5} leads to the following proposition.

\begin{prop} \label{prop_polynomialIdentity_odd_n}
Let $n \geq 5$ be an odd integer, and let $A = F_n(\mathfrak{N}_4)$. Then the polynomial
\[
f = [x_1, x_2] \cdots [x_n, x_{n+1}][x_{n+2}, x_{n+3}]
\]
is a polynomial identity for $A$.
\end{prop}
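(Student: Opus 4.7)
The plan is to show that $[m_1,m_2]\cdots[m_{n+2},m_{n+3}]\in I_5$ for every choice of monomials $m_1,\dots,m_{n+3}$ in $K\langle x_1,\dots,x_n\rangle$, which is equivalent to the statement of the proposition. First I would apply Lemma~\ref{CommutatorOfMonomials}(i) to each factor $[m_i,m_{i+1}]$ and expand, reducing the problem to showing that every expression of the form
\[
T=w_0\,[x_{a_1},x_{b_1}]\,w_1\,[x_{a_2},x_{b_2}]\,w_2\cdots[x_{a_k},x_{b_k}]\,w_k
\]
belongs to $I_5$, where $k=(n+3)/2$, the $w_j$ are monomials in $K\langle x_1,\dots,x_n\rangle$, and all indices $a_j,b_j$ lie in $\{1,\dots,n\}$.

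I would then proceed by induction on the total number $N=\sum_j|w_j|$ of letters lying outside the commutators. The base case $N=0$ reduces $T$ to $[x_{a_1},x_{b_1}]\cdots[x_{a_k},x_{b_k}]$, which is in $I_5$ directly by Lemma~\ref{lemma_I5}. For the inductive step, pick a letter $x_l$ in some $w_j$ with $j<k$ and move it past the next commutator via the identity $x_l\cdot[c]=[c]\cdot x_l+[x_l,[c]]$. The main term has one fewer letter outside the commutators and is in $I_5$ by the inductive hypothesis. The correction term replaces the length-$2$ commutator $[x_{a_{j+1}},x_{b_{j+1}}]$ with the pure length-$3$ commutator $[x_{a_{j+1}},x_{b_{j+1}},x_l]$, leaving a product with $k-1$ length-$2$ and one length-$3$ pure commutator in single variables from $\{x_1,\dots,x_n\}$.

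The main obstacle is verifying that each such correction also lies in $I_5$, since the naive bound $I_2^{k-1}\cdot I_3\subset I_4$ is too weak. To bridge the gap, I would invoke Theorem~\ref{thm_products_of_commutators}(iii), which yields $[L_2,I_3]\subset L_5\subset I_5$, allowing the length-$3$ commutator to be commuted modulo $I_5$ past every remaining length-$2$ commutator until it sits at one end of the product. The remaining prefix consists of $k-1$ pure length-$2$ commutators with $2(k-1)=n+1$ variable slots from $\{x_1,\dots,x_n\}$; by pigeon-hole either a single commutator $[x_a,x_a]=0$ (in which case the whole term vanishes) or two distinct commutators share a common variable. In the latter case, the commutator-swap reasoning used in the proof of Lemma~\ref{lemma_I5} (permuting length-$2$ commutators modulo $L_4\subset I_3$ and bringing the shared pair adjacent, where $[x_a,x_b][x_a,x_c]\in I_3$) places the prefix into $I_3$ modulo $I_5$. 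Therefore each correction lies modulo $I_5$ in $I_3\cdot I_3\subset I_{3+3-1}=I_5$ by Theorem~\ref{thm_products_of_commutators}(iv). Sub-corrections produced while moving the length-$3$ commutator past intervening monomials contribute only $L_4$-terms, which are central modulo $I_5$ and sit inside $I_3$, so they are absorbed by the same $I_3\cdot I_3$ envelope and the induction closes.
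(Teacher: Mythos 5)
Your argument is essentially the paper's own proof: expand with Lemma~\ref{CommutatorOfMonomials}, push the letters lying outside the commutators to one end, handle the resulting pure product of $(n+3)/2$ double commutators by Lemma~\ref{lemma_I5}, and kill each triple-commutator correction by noting that the remaining $(n+1)/2$ double commutators occupy $n+1$ slots filled from only $n$ variables, hence (after the standard rearrangement modulo negligible terms) lie in $I_3$, so the correction falls into $I_3I_3\subset I_5$ --- exactly the paper's use of $[L_2,L_3]\subset L_5$ and $I_3I_3\subset I_5$. The only slip is your induction measure: moving a letter past a commutator leaves $N=\sum_j|w_j|$ unchanged (the letter is still outside the commutators, just one slot to the right), so you should induct instead on a weighted count such as $\sum_{j<k}(k-j)|w_j|$; this is purely cosmetic and the rest of the argument goes through.
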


\begin{proof}

Let $m_1, \dots, m_{n+3}$ be monomials in $K\left\langle x_1, \dots, x_n\right\rangle$. We have to show that
\[
f(m_1, \dots, m_{n+3}) \equiv_{I_5} 0.
\]
Using the identity
\begin{align} \label{identity_assoc}
[a, bc] = b[a,c] + [a,b]c
\end{align}

we obtain that $f(m_1, \dots, m_{n+3})$ is a linear combination of elements of the type 
\[
h = u_0[x_{i_1}, x_{i_2}]u_1[x_{i_3}, x_{i_4}]u_2 \cdots u_{t-1}[x_{i_{n+2}}, x_{i_{n+3}}] u_t,
\]
where $u_0, \dots, u_t$ are monomials in $K\left\langle x_1, \dots, x_n\right\rangle$.

We will show first that $h \equiv_{I_5} u_0[x_{i_1}, x_{i_2}][x_{i_3}, x_{i_4}] \cdots [x_{i_{n+2}}, x_{i_{n+3}}]u_1\cdots u_t$.

Using the property
\[
c[a,b] = [a,b]c - [a,b,c]
\]
we start moving all monomials $u_1, \dots, u_{t-1}$ to the right starting with $u_1$. After the first two steps we obtain

\begin{align*}
h= &u_0[x_{i_1}, x_{i_2}]u_1[x_{i_3}, x_{i_4}]u_2 [x_{i_5}, x_{i_6}]u_3 \cdots u_{t-1}[x_{i_{n+2}}, x_{i_{n+3}}]u_t = \\
&u_0[x_{i_1}, x_{i_2}][x_{i_3}, x_{i_4}]u_1u_2 [x_{i_5}, x_{i_6}]u_3 \cdots u_{t-1}[x_{i_{n+2}}, x_{i_{n+3}}]u_t - \\
&-u_0[x_{i_1}, x_{i_2}][x_{i_3}, x_{i_4}, u_1]u_2 [x_{i_5}, x_{i_6}]u_3 \cdots u_{t-1}[x_{i_{n+2}}, x_{i_{n+3}}]u_t = \\
&u_0[x_{i_1}, x_{i_2}][x_{i_3}, x_{i_4}] [x_{i_5}, x_{i_6}]u_1u_2u_3 \cdots u_{t-1}[x_{i_{n+2}}, x_{i_{n+3}}]u_t- \\
&-u_0[x_{i_1}, x_{i_2}][x_{i_3}, x_{i_4}][x_{i_5}, x_{i_6}, u_1u_2]u_3 \cdots u_{t-1}[x_{i_{n+2}}, x_{i_{n+3}}]u_t - \\
& - u_0[x_{i_1}, x_{i_2}][x_{i_3}, x_{i_4}, u_1][x_{i_5}, x_{i_6}]u_2u_3 \cdots u_{t-1}[x_{i_{n+2}}, x_{i_{n+3}}]u_t + \\
&u_0[x_{i_1}, x_{i_2}][x_{i_3}, x_{i_4}, u_1][x_{i_5}, x_{i_6}, u_2]u_3 \cdots u_{t-1}[x_{i_{n+2}}, x_{i_{n+3}}]u_t.
\end{align*}

In the last summand we have a product of two triple commutators. Using the property $I_3I_3 \subset I_5$ we obtain that
\[
u_0[x_{i_1}, x_{i_2}][x_{i_3}, x_{i_4}, u_1][x_{i_5}, x_{i_6}, u_2]u_3 \cdots u_{t-1}[x_{i_{n+2}}, x_{i_{n+3}}]u_t \in I_5.
\]

Hence,
\begin{align*}
h \equiv_{I_5} &u_0[x_{i_1}, x_{i_2}][x_{i_3}, x_{i_4}] [x_{i_5}, x_{i_6}]u_1u_2u_3 \cdots u_{t-1}[x_{i_{n+2}}, x_{i_{n+3}}]u_t- \\
&-u_0[x_{i_1}, x_{i_2}][x_{i_3}, x_{i_4}][x_{i_5}, x_{i_6}, u_1u_2]u_3 \cdots u_{t-1}[x_{i_{n+2}}, x_{i_{n+3}}]u_t - \\
& - u_0[x_{i_1}, x_{i_2}][x_{i_3}, x_{i_4}, u_1][x_{i_5}, x_{i_6}]u_2u_3 \cdots u_{t-1}[x_{i_{n+2}}, x_{i_{n+3}}]u_t.
\end{align*}

We continue in the same way. We notice that at each step $h$ is equivalent mod $I_5$ to a sum of elements where in all summands we have moved the product $u_1\dots u_i$ one step to the right. In addition, at each step there is a summand which contains a product of only double commutators and the other summands contain one triple commutator. All summands which contain at least two triple commutators belong to $I_5$ and can be disregarded since $I_2I_3 \subset I_4$, $I_3I_3 \subset I_5$, $I_3I_4 \subset I_6$, and $I_4I_4 \subset I_6$. Therefore, in the end we obtain

\begin{align*}
h \equiv_{I_5} u_0[x_{i_1}, x_{i_2}][x_{i_3}, x_{i_4}] \cdots [x_{i_{n+2}}, x_{i_{n+3}}]u_1\cdots u_t +h_0,
\end{align*}

where $h_0$ is a sum of elements of the form
\[
u_0[x_{i_1}, x_{i_2}] \cdots [x_{i_{k-2}}, x_{i_{k-1}}][x_{i_k}, x_{i_{k+1}}, u'][x_{i_{k+2}}, x_{i_{k+3}}] \cdots [x_{i_{n+2}}, x_{i_{n+3}}]u_{j+1}\cdots u_t
\]
and $u' = u_1\cdots u_j$ for some $i$. 

We first use the property $[L_2, L_3] \subset L_5$, that is, double and triple commutators commute up to an element of $L_5$. Therefore, the elements 
\[
u_0[x_{i_1}, x_{i_2}][x_{i_3}, x_{i_4}] \cdots [x_{i_k}, x_{i_{k+1}}, u'] \cdots [x_{i_{n+2}}, x_{i_{n+3}}]u_1\cdots u_t
\]
and 
\[
u_0[x_{i_k}, x_{i_{k+1}}, u'][x_{i_1}, x_{i_2}][x_{i_3}, x_{i_4}] \cdots \widehat{[x_{i_k}, x_{i_{k+1}}]} \cdots [x_{i_{n+2}}, x_{i_{n+3}}]u_1\cdots u_t
\]
are equal up to an element of $I_5$.  Here, the notation $\widehat{[x_{i_k}, x_{i_{k+1}}]}$ means that this term is missing from the product. Hence, without loss of generality, $h_0$ is a sum of elements of the form 
\[
u_0[x_{i_k}, x_{i_{k+1}}, u'][x_{i_1}, x_{i_2}][x_{i_3}, x_{i_4}] \cdots \widehat{[x_{i_k}, x_{i_{k+1}}]} \cdots [x_{i_{n+2}}, x_{i_{n+3}}]u_1\cdots u_t.
\]

We have that $i_1, \dots, i_{n+3} \in \{1, 2, \dots, n\}$. Hence in the product
\[
[x_{i_1}, x_{i_2}][x_{i_3}, x_{i_4}] \cdots \widehat{[x_{i_k}, x_{i_{k+1}}]} \cdots [x_{i_{n+2}}, x_{i_{n+3}}]
\]
there is at least one repeating index. Therefore,
\[
[x_{i_1}, x_{i_2}][x_{i_3}, x_{i_4}] \cdots \widehat{[x_{i_k}, x_{i_{k+1}}]} \cdots [x_{i_{n+2}}, x_{i_{n+3}}] \in I_3.
\]

Using again that $I_3I_3 \subset I_5$ we obtain that
\[
u_0[x_{i_k}, x_{i_{k+1}}, u'][x_{i_1}, x_{i_2}][x_{i_3}, x_{i_4}] \cdots \widehat{[x_{i_k}, x_{i_{k+1}}]} \cdots [x_{i_{n+2}}, x_{i_{n+3}}]u_1\cdots u_t \in I_5.
\]

Hence, $h_0 \in I_5$ and
\begin{align*}
h \equiv_{I_5} u_0[x_{i_1}, x_{i_2}][x_{i_3}, x_{i_4}] \cdots [x_{i_{n+2}}, x_{i_{n+3}}]u_1\cdots u_t.
\end{align*}

Let us denote $h' = [x_{i_1}, x_{i_2}][x_{i_3}, x_{i_4}] \cdots [x_{i_{n+2}}, x_{i_{n+3}}]$. Then, there are three repeating indices in $h'$  (since $i_1, \dots, i_{n+3} \in \{1,2 \dots, n\}$). By Lemma \ref{lemma_I5}, we obtain that $h' \in I_5$.

In short, we showed that $h \in I_5$ and therefore $f$ is a polynomial identity for $A$.
\end{proof}

\begin{remark} Notice that, when $n=3$ the polynomial $[x_1, x_2][x_1, x_3][x_2, x_3]$ is not a polynomial identity for $F_3(\mathfrak{N}_4)$. Hence, Lemma \ref{lemma_I5} and Proposition \ref{prop_polynomialIdentity_odd_n} do not hold for $n=3$.
\end{remark}

The next proposition shows that the generator $(3)$ of Theorem \ref{thm_PI} (i) is an identity for $F_n(\mathfrak{N}_4)$.
\begin{prop}\label{prop_identity_g3_odd}
Let $n \geq 5$ be again an odd integer. The polynomial
\[
g_3^{(n+2)} = \sum_{\sigma \in S_{n+1}}(-1)^{\sigma} [x_{\sigma(1)}, x_{\sigma(2)}]\cdots [x_{\sigma(n-2)}, x_{\sigma(n-1)}][x_{\sigma(n)}, x_{\sigma(n+1)}, x_1]
\]
is a polynomial identity for $A = F_n(\mathfrak{N}_4)$.
\end{prop}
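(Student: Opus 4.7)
The plan is to evaluate $g_3^{(n+2)}$ on arbitrary monomials $m_1,\dots,m_{n+1}$ of $K\langle x_1,\dots,x_n\rangle$ and show the outcome lies in $I_5$, combining the expansion-and-reduce technique from the proof of Proposition~\ref{prop_polynomialIdentity_odd_n} with the alternating structure of $g_3^{(n+2)}$ to get the needed cancellations.

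First I would expand every commutator via Lemma~\ref{CommutatorOfMonomials} and iterated applications of $[a,bc]=b[a,c]+[a,b]c$, writing each $[m_{\sigma(j)},m_{\sigma(j+1)}]$ and the triple commutator $[m_{\sigma(n)},m_{\sigma(n+1)},m_1]$ as a sum of terms whose inner commutators only involve single letters $x^i_k\in\{x_1,\dots,x_n\}$, with the remaining letters of each monomial appearing as prefixes and suffixes. Next I would move every outside letter past the inner commutators to a fixed position, using $c[a,b]=[a,b]c-[a,b,c]$ and its triple-commutator analogue. Every such move introduces a correction in which one extra triple (resp.\ quadruple) commutator appears; combined with the triple commutator already present in the term, the correction lies in $I_3\cdot I_3\subset I_5$ (by Theorem~\ref{thm_products_of_commutators}(iv)) or in $I_3\cdot I_4\subset I_6\subset I_5$. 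The relation $[L_2,L_3]\subset L_5$ further allows reordering the inner double and triple commutators modulo $I_5$.

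After these reductions, for each simultaneous choice of a letter $y_i=x^i_{a_i}$ from $m_i$ (filling the commutator slot occupied by $m_i$) and a letter $w=x^1_b$ from $m_1$ (filling the distinguished entry of the triple commutator), the contribution to $g_3^{(n+2)}(m_1,\dots,m_{n+1})$ should equal, modulo $I_5$,
\[
u\cdot\sum_{\sigma\in S_{n+1}}(-1)^\sigma [y_{\sigma(1)},y_{\sigma(2)}]\cdots[y_{\sigma(n-2)},y_{\sigma(n-1)}][y_{\sigma(n)},y_{\sigma(n+1)},w],
\]
where $u$ is a product of the outside letters independent of $\sigma$. A direct reindexing $\sigma\mapsto(i,j)\sigma$ shows the bracketed sum is alternating in $y_1,\dots,y_{n+1}$ with $w$ held fixed. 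Since the $y_i$ belong to the $n$-element set $\{x_1,\dots,x_n\}$, pigeonhole forces $y_i=y_j$ for some $i\neq j$, and alternation then kills the inner sum; hence every contribution vanishes modulo $I_5$.

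The main obstacle I anticipate is this factoring step: once the outside letters are pushed past the inner commutators, they sit on the left in an order that depends on $\sigma$ (because the order of the commutator factors of $g_3^{(n+2)}$ depends on $\sigma$), and naively permuting them via $uv=vu+[u,v]$ introduces an extra double commutator. By the general bounds of Theorem~\ref{thm_products_of_commutators} a product of $(n+1)/2$ double commutators with a single triple commutator only lies in $I_4$, not in $I_5$. To secure the required $I_5$-control I would invoke the sharper consequences in Lemma~\ref{consequencesN4(2)}(vi)--(vii), which give specific products of two double commutators and a triple commutator that do land in $I_5$; alternatively, one can regroup the $\sigma$-sum by the unordered pair $\{\sigma(n),\sigma(n+1)\}$, use $[L_3,L_2]\subset L_5$ to pull the triple commutator to the front, and then handle the remaining product of double commutators by the techniques already used in Proposition~\ref{prop_polynomialIdentity_odd_n}.
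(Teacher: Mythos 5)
Your proposal follows essentially the same route as the paper's proof: expand on monomials via Lemma~\ref{CommutatorOfMonomials}, push the outside letters to a fixed position modulo $I_5$ using $I_3I_3\subset I_5$ and the centrality of length-four commutators, factor out a $\sigma$-independent prefix, and finish by alternation plus pigeonhole on the $n+1$ selected letters. The reordering obstacle you flag is exactly the point the paper disposes of by invoking Proposition~\ref{prop_polynomialIdentity_odd_n} (equivalently Lemma~\ref{lemma_I5}): each correction term from swapping outside letters contains $(n+1)/2$ double commutators in only $n$ variables times the residual triple commutator, so it lands in $I_3\cdot I_3\subset I_5$, which is the second of your two proposed remedies.
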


\begin{proof}
The approach is similar to the proof of Proposition \ref{prop_polynomialIdentity_odd_n}.
Let $m_1, \dots, m_{n+1}$ be monomials in $K\left\langle x_1, \dots, x_n\right\rangle$. We have to show that
\[
g_3^{(n+2)}(m_1, \dots, m_{n+1}) \equiv_{I_5} 0.
\]
We set $m_i = x_i^1 \cdots x_i^{q_i}$ for each $i = 1, \dots, n+1$, where $x_i^j \in \{x_1, \dots, x_n\}$ for each $i$ and $j$. 

By substituting $m_1, \dots, m_{n+1}$ in $g_3^{(n+2)}$ and using Lemma \ref{CommutatorOfMonomials} (i) and (ii) and Proposition \ref{prop_polynomialIdentity_odd_n} we obtain

\begin{align*}
&g_3^{(n+2)}(m_1, \dots, m_{n+1}) \equiv_{I_5} \sum_{\sigma \in S_{n+1}} \sum_{i_1, \dots, i_{n+1}}^{q_1, \dots, q_{n+1}} \sum_{j=1}^{q_1}(-1)^{\sigma}\\
&x_{\sigma(1)}^1\cdots x_{\sigma(1)}^{i_{\sigma(1)-1}}x_{\sigma(2)}^1\cdots x_{\sigma(2)}^{i_{\sigma(2)-1}}[x_{\sigma(1)}^{i_{\sigma(1)}}, x_{\sigma(2)}^{i_{\sigma(2)}}] x_{\sigma(2)}^{i_{\sigma(2)+1}} \cdots x_{\sigma(2)}^{q_{\sigma(2)}} 
x_{\sigma(1)}^{i_{\sigma(1)+1}} \cdots x_{\sigma(1)}^{q_{\sigma(1)}} \cdots \\
& \cdots \\
&x_{\sigma(n)}^1\cdots x_{\sigma(n)}^{i_{\sigma(n)-1}}x_{\sigma(n+1)}^1\cdots x_{\sigma(n+1)}^{i_{\sigma(n+1)-1}} x_1^1 \cdots x_1^{j-1}
[x_{\sigma(n)}^{i_{\sigma(n)}}, x_{\sigma(n+1)}^{i_{\sigma(n+1)}}, x_1^j] \times\\
&x_1^{j+1} \cdots x_1^{q_1}
x_{\sigma(n+1)}^{i_{\sigma(n+1)+1}} \cdots x_{\sigma(n+1)}^{q_{\sigma(n+1)}} 
x_{\sigma(n)}^{i_{\sigma(n)+1}} \cdots x_{\sigma(n)}^{q_{\sigma(n)}}.
\end{align*}

Let us now fix $i_1, \dots, i_{n+1}$ and $j$ and consider the polynomial
\begin{align*}
&g_{i_1, \dots, i_{n+1}, j} = \sum_{\sigma \in S_{n+1}}  (-1)^{\sigma}\\
&x_{\sigma(1)}^1\cdots x_{\sigma(1)}^{i_{\sigma(1)-1}}x_{\sigma(2)}^1\cdots x_{\sigma(2)}^{i_{\sigma(2)-1}}[x_{\sigma(1)}^{i_{\sigma(1)}}, x_{\sigma(2)}^{i_{\sigma(2)}}] x_{\sigma(2)}^{i_{\sigma(2)+1}} \cdots x_{\sigma(2)}^{q_{\sigma(2)}} 
x_{\sigma(1)}^{i_{\sigma(1)+1}} \cdots x_{\sigma(1)}^{q_{\sigma(1)}} \cdots \\
& \cdots \\
&x_{\sigma(n)}^1\cdots x_{\sigma(n)}^{i_{\sigma(n)-1}}x_{\sigma(n+1)}^1\cdots x_{\sigma(n+1)}^{i_{\sigma(n+1)-1}} x_1^1 \cdots x_1^{j-1}
[x_{\sigma(n)}^{i_{\sigma(n)}}, x_{\sigma(n+1)}^{i_{\sigma(n+1)}}, x_1^j] \times\\
&x_1^{j+1} \cdots x_1^{q_1}
x_{\sigma(n+1)}^{i_{\sigma(n+1)+1}} \cdots x_{\sigma(n+1)}^{q_{\sigma(n+1)}} 
x_{\sigma(n)}^{i_{\sigma(n)+1}} \cdots x_{\sigma(n)}^{q_{\sigma(n)}}.
\end{align*}

Using the property $I_3I_3 \subset I_5$ we can move the variables which are outside the commutators to the left of the double commutators. Furthermore, due to Proposition \ref{prop_polynomialIdentity_odd_n}, we can exchange the places of variables outside the commutators. Therefore, we obtain
\begin{align*}
&g_{i_1, \dots, i_{n+1}, j} \equiv_{I_5} \sum_{\sigma \in S_{n+1}}  (-1)^{\sigma} \alpha_{\sigma} 
[x_{\sigma(1)}^{i_{\sigma(1)}},x_{\sigma(2)}^{i_{\sigma(2)}}]
\cdots [x_{\sigma(n-2)}^{i_{\sigma(n-2)}}, x_{\sigma(n-1)}^{i_{\sigma(n-1)}}] [x_{\sigma(n)}^{i_{\sigma(n)}}, x_{\sigma(n+1)}^{i_{\sigma(n+1)}}, x_1^j]\\
&x_1^{j+1} \cdots x_1^{q_1}x_{\sigma(n+1)}^{i_{\sigma(n+1)+1}} \cdots x_{\sigma(n+1)}^{q_{\sigma(n+1)}} 
x_{\sigma(n)}^{i_{\sigma(n)+1}} \cdots x_{\sigma(n)}^{q_{\sigma(n)}},
\end{align*}
where 
\begin{align*}
  & \alpha_{\sigma} =  x_{\sigma(1)}^1\cdots \widehat{x_{\sigma(1)}^{i_{\sigma(1)}}} \cdots x_{\sigma(1)}^{q_{\sigma(1)}} \cdots x_{\sigma(n-1)}^1\cdots \widehat{x_{\sigma(n-1)}^{i_{\sigma(n-1)}}} \cdots x_{\sigma(n-1)}^{q_{\sigma(n-1)}} \times\\
&x_{\sigma(n)}^1\cdots
x_{\sigma(n)}^{i_{\sigma(n)-1}}x_{\sigma(n+1)}^1\cdots x_{\sigma(n+1)}^{i_{\sigma(n+1)-1}}x_1^1 \cdots x_1^{j-1}.
\end{align*}

Now, notice that we can also move to the left  the variables
\[
x_1^{j+1} \cdots x_1^{q_1}x_{\sigma(n+1)}^{i_{\sigma(n+1)+1}} \cdots x_{\sigma(n+1)}^{q_{\sigma(n+1)}} 
x_{\sigma(n)}^{i_{\sigma(n)+1}} \cdots x_{\sigma(n)}^{q_{\sigma(n)}}.
\]
since the product of the pure commutators is in $I_4$ and elements of $I_4$ are central modulo $I_5$.

In short, keeping in mind that we can switch the places of variables outside commutators, we showed that

\begin{align} \label{eq_g_end_odd}
&g_{i_1, \dots, i_{n+1},j}  \equiv_{I_5} \beta\sum_{\sigma \in S_{n+1}}  (-1)^{\sigma}
[x_{\sigma(1)}^{i_{\sigma(1)}},x_{\sigma(2)}^{i_{\sigma(2)}}]
\cdots [x_{\sigma(n-2)}^{i_{\sigma(n-2)}}, x_{\sigma(n-1)}^{i_{\sigma(n-1)}}] [x_{\sigma(n)}^{i_{\sigma(n)}}, x_{\sigma(n+1)}^{i_{\sigma(n+1)}}, x_1^j],
\end{align}
where
\[
\beta = x_{1}^1\cdots \widehat{x_{1}^{i_1}} \cdots x_{1}^{q_{1}} \cdots x_{n+1}^1\cdots \widehat{x_{n+1}^{i_{n+1}}} \cdots x_{n+1}^{q_{n+1}}x_{1}^1\cdots \widehat{x_{1}^{j}} \cdots x_{1}^{q_{1}}.
\]
Remark that $\beta$ is independent of the permutation $\sigma$, it depends only on the fixed choice of $i_1, \dots, i_{n+1}$ and $j$. In addition, $x^{i_{\sigma(1)}}_{\sigma(1)}, \dots, x^{i_{\sigma(n+1)}}_{\sigma(n+1)} \in \{x_1, \dots, x_n\}$, which implies that at least one of the variables appears twice. Therefore, every summand on the right side of Equation (\ref{eq_g_end_odd}) appears twice with two different signs, hence the right side of Equation (\ref{eq_g_end_odd}) vanishes. This implies that for any choice of $i_1, \dots, i_{n+1}$ and $j$ we have
\[
g_{i_1, \dots, i_{n+1},j}  \equiv_{I_5} 0
\]
and thus $g_3^{(n+2)}(m_1, \dots, m_{n+1}) \equiv_{I_5} 0$.
\end{proof}

In the next lemmas we give several polynomials which are not identities for $F_n(\mathfrak{N}_4)$.

\begin{lemma} \label{lemma_not_identity1}
Let $n$ be an odd integer. The polynomial
\[
f = [x_1, x_2] \cdots [x_n, x_{n+1}]
\]
is not a polynomial identity for $F_n(\mathfrak{N}_4)$.
\end{lemma}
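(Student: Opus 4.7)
The plan is to produce an explicit non-vanishing evaluation of $f$ using an $n$-generated subalgebra of a suitable algebra in $\mathfrak{N}_4$. By Proposition \ref{prop_DeryabinaKrasilnikov} the algebra $E \otimes E_2$ belongs to $\mathfrak{N}_4$, and any subalgebra of it generated by $n$ elements is a homomorphic image of $F_n(\mathfrak{N}_4)$. So producing $y_1, \dots, y_n \in E \otimes E_2$ together with an element $y_{n+1}$ in the subalgebra they generate, with $f(y_1, \dots, y_{n+1}) \neq 0$, will prove the lemma.

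Denote the generators of $E$ by $e_1, e_2, \dots$ and those of $E_2$ by $h_1, h_2$. I would set
\[
y_1 = e_1 \otimes 1 + 1 \otimes h_1, \quad y_2 = e_2 \otimes 1 + 1 \otimes h_2, \quad y_i = e_i \otimes 1 \ \text{ for } 3 \le i \le n,
\]
and take $y_{n+1} := y_1$, which trivially lies in the subalgebra generated by $y_1, \dots, y_n$. Using that $E \otimes 1$ and $1 \otimes E_2$ commute in $E \otimes E_2$, a direct calculation gives
\[
[y_1, y_2] = 2\,e_1 e_2 \otimes 1 + 2 \otimes h_1 h_2,
\]
while $[y_i, y_j] = 2\,e_i e_j \otimes 1$ for every other pair of indices; in particular $[y_n, y_1] = -2\,e_1 e_n \otimes 1$.

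To evaluate $f(y_1, \dots, y_n, y_1) = [y_1, y_2][y_3, y_4]\cdots[y_{n-2}, y_{n-1}][y_n, y_1]$, I would split the first commutator into its two summands. The summand coming from $2\,e_1 e_2 \otimes 1$ produces a Grassmann monomial in which $e_1$ appears twice (the second $e_1$ coming from $[y_n, y_1]$) and hence vanishes in $E$. The summand coming from $2 \otimes h_1 h_2$ yields a nonzero scalar multiple of $(e_3 e_4 \cdots e_{n-1})(e_1 e_n) \otimes h_1 h_2$, which, after reordering the Grassmann factor, equals a nonzero multiple of $e_1 e_3 e_4 \cdots e_n \otimes h_1 h_2$. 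The Grassmann part is a product of $n-1$ distinct generators of $E$ and $h_1 h_2 \neq 0$ in $E_2$, so the result is nonzero in $E \otimes E_2$.

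The only delicate point is the sign appearing when $e_1$ is moved past $e_3, \dots, e_{n-1}$ in the Grassmann rearrangement; since $n$ is odd this involves an even number of transpositions, so no unwanted cancellation occurs. The conceptual idea behind the construction is the choice of the two ``mixed'' generators $y_1$ and $y_2$: their commutator has a component in $1 \otimes E_2$ that is orthogonal, in the tensor sense, to the Grassmann obstruction coming from the repeated $e_1$, and this is exactly the component that survives in the final product.
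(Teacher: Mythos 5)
Your proof is correct and follows essentially the same strategy as the paper: both arguments reduce to a nonzero evaluation in $E\otimes E_2$ (which lies in $\mathfrak{N}_4$ by Proposition \ref{prop_DeryabinaKrasilnikov}), with only the choice of witness differing --- the paper substitutes $x_2\mapsto x_1$ to get the $n$-variable consequence $[x_1,x_2][x_1,x_3][x_4,x_5]\cdots[x_{n-1},x_n]$ and uses one ``fat'' generator $e_1\otimes g_1+e_2\otimes g_2+e_3\otimes 1$, while you substitute $x_{n+1}\mapsto x_1$ and split the mixing between two generators. Your Grassmann computation checks out (the $e_1e_2\otimes 1$ component dies on the repeated $e_1$, the $1\otimes h_1h_2$ component survives), so the argument is complete; the only cosmetic remark is that the parity discussion at the end is unnecessary, since a single surviving monomial is nonzero regardless of its sign.
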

\begin{proof}

The polynomial 
\[
g = [x_1, x_2][x_1, x_3][x_4, x_5] \cdots [x_{n-1}, x_n]
\]
is a consequence of $f$ and thus it is enough to show that $g$ is not a polynomial identity for $F_n(\mathfrak{N}_4)$. In addition, $g \in K\left\langle x_1, \dots, x_n \right\rangle$. Therefore, Lemma \ref{nvariables} implies that $g$ is a polynomial identity for $F_n(\mathfrak{N}_4)$ if and only if $g \in I_5$. Hence, we need to show that $g \notin I_5$.

By Proposition \ref{prop_DeryabinaKrasilnikov} or equivalently by Corollary \ref{Stoyanova-Venkova_corollary}, the algebra $E\otimes E_2$ satisfies the identity $[x_1, \dots,x_5]=0$. Therefore, to prove that $g \notin I_5$ it suffices to show that $g$ is not a polynomial identity for $E\otimes E_2$.
We set
\begin{align*}
&x_1 = e_1\otimes g_1 + e_2 \otimes g_2 + e_3 \otimes 1\\
&x_i = e_{i+2}\otimes 1, \text{ for } i \geq 2.
\end{align*}

By direct computation we show that
\[
[x_1, x_2][x_1,x_3] = - 8e_1e_2e_4e_5\otimes g_1g_2.
\]

Therefore,
\begin{align*}
&g(e_1\otimes g_1 + e_2 \otimes g_2 + e_3 \otimes 1, e_4\otimes 1, \dots, e_{n+2}\otimes 1) = \\
&(-8e_1e_2e_4e_5\otimes g_1g_2)(2e_6e_7\otimes 1)\cdots (2e_{n+1}e_{n+2}\otimes 1) = \\
&-2^{(n+3)/2}e_1e_2e_4e_5e_6 \dots e_{n+2}\otimes g_1g_2 \neq 0.
\end{align*}
This proves the statement.
\end{proof}

\begin{lemma}
Let $n$ be again an odd integer. The polynomial
\[
f = [x_1, x_2]\cdots [x_{n-2}, x_{n-1}][x_n, x_{n+1}, x_1]
\]
is not a polynomial identity for $F_n(\mathfrak{N}_4)$.
\end{lemma}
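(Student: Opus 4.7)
The plan mirrors Lemma \ref{lemma_not_identity1}. I would specialize $x_{n+1}\mapsto x_1$ in $f$ to obtain the consequence
\[
g(x_1,\ldots,x_n) = [x_1,x_2][x_3,x_4]\cdots [x_{n-2},x_{n-1}][x_n,x_1,x_1] \in K\langle x_1,\ldots,x_n\rangle.
\]
If $f$ were an identity for $F_n(\mathfrak{N}_4)$, so would be $g$; by Lemma \ref{nvariables} this would force $g\in I_5$, and by Proposition \ref{prop_DeryabinaKrasilnikov} $g$ would then vanish on $E\otimes E_2\in\mathfrak{N}_4$. Thus it suffices to produce a nonzero evaluation of $g$ in $E\otimes E_2$.

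Using the same substitution as in Lemma \ref{lemma_not_identity1}, namely $x_1 = e_1\otimes g_1+e_2\otimes g_2+e_3\otimes 1$ and $x_i = e_{i+2}\otimes 1$ for $2\le i\le n$, the commutators $[x_{2j-1},x_{2j}]$ with $j\ge 2$ become $2e_{2j+1}e_{2j+2}\otimes 1$, while $[x_1,x_2] = 2e_1 e_4\otimes g_1+2e_2 e_4\otimes g_2+2e_3 e_4\otimes 1$. The central calculation is the identity
\[
[x_n,x_1,x_1] = 8\,e_1 e_2 e_{n+2}\otimes g_1 g_2,
\]
obtained by expanding $[[x_n,x_1],x_1]$ component by component: the $e_3\otimes 1$ piece of $x_1$ commutes with $[x_n,x_1]$ (whose $E$-part has even degree) and drops out, the diagonal products vanish since $g_i^2=0$, and the surviving cross terms combine via $g_1g_2-g_2g_1=2g_1g_2$ in $E_2$. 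When this factor is multiplied by the preceding product, the summands of $[x_1,x_2]$ proportional to $g_1$ or $g_2$ are annihilated by $g_i^2=0$, so only the $2e_3 e_4\otimes 1$ contribution survives. Reordering the resulting Grassmann product $e_3 e_4\cdots e_{n+1}\cdot e_1 e_2 e_{n+2}$ into standard position costs the sign $(-1)^{2(n-1)}=1$ (using that $n$ is odd), and one obtains
\[
g(x_1,\ldots,x_n) = 2^{(n+5)/2}\, e_1 e_2\cdots e_{n+2}\otimes g_1 g_2\;\neq\; 0,
\]
as desired.

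The main technical obstacle is the derivation of the formula for $[x_n,x_1,x_1]$: one must track the nine cross terms arising from the three summands of $x_1$ appearing twice in the iterated commutator, and carefully handle both the anticommutativity of the $e_i$'s in $E$ and the annihilations $g_i^2=0$ in $E_2$. This is the direct analogue of the calculation $[x_1,x_2][x_1,x_3]=-8e_1e_2 e_4 e_5\otimes g_1 g_2$ used in Lemma \ref{lemma_not_identity1}; once it is in place, the remainder of the argument is routine bookkeeping.
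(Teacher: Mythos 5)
Your proof is correct and follows essentially the same route as the paper: pass to an $n$-variable consequence of $f$, invoke Lemma \ref{nvariables} and Proposition \ref{prop_DeryabinaKrasilnikov} to reduce to a nonvanishing evaluation in $E\otimes E_2$, and use the same substitution $x_1 = e_1\otimes g_1+e_2\otimes g_2+e_3\otimes 1$, $x_i=e_{i+2}\otimes 1$. The only (immaterial) difference is the choice of consequence --- you put both repetitions of $x_1$ into the triple commutator $[x_n,x_1,x_1]$, while the paper uses $[x_1,x_2,x_1][x_1,x_3]\cdots[x_{n-1},x_n]$ --- and your key computation $[x_n,x_1,x_1]=8e_1e_2e_{n+2}\otimes g_1g_2$ and the resulting constant $2^{(n+5)/2}$ check out.
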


\begin{proof}
The proof is similar to the proof of Lemma \ref{lemma_not_identity1}. The polynomial
\[
g = [x_1, x_2, x_1][x_1, x_3] \cdots [x_{n-1}, x_n]
\]
is a consequence of $f$ and is a polynomial in $n$ variables. Therefore, it is enough to prove that $g$ is not a polynomial identity for $E\otimes E_2$. We set
\begin{align*}
& x_1 = e_1\otimes g_1 + e_2 \otimes g_2 + e_3 \otimes 1\\
& x_i = e_{i+2}\otimes 1 \text { for } i \geq 2.
\end{align*}

By direct computation we show that
\[
[x_1, x_2, x_1][x_1, x_3] = 8e_1e_2e_3e_4e_5 \otimes g_1g_2.
\]

Hence, after the above evaluation, we obtain
\[
g =  2^{\frac{n+3}{2}}e_1\cdots e_{n+2}\otimes g_1g_2 \neq 0.
\]
\end{proof}

\begin{lemma}\label{lemma_not_identity4}
Let $n$ be an odd integer. Then the standard polynomial
\begin{align*}
    s_{n+1}(x_1, \dots, x_{n+1}) =\frac{1}{2^{(n+1)/2}}\sum_{\sigma \in S_{n+1}} (-1)^{\sigma} [x_{\sigma(1)}, x_{\sigma(2)}] \cdots [x_{\sigma(n)}, x_{\sigma(n+1)}]
\end{align*}
is not a polynomial identity for $F_n(\mathfrak{N}_4)$.
\end{lemma}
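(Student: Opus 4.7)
The strategy mirrors that of Lemma \ref{lemma_not_identity1} and its follower, but with an essential complication: because $s_{n+1}$ is alternating, the simple substitution $x_i\mapsto x_j$ ($i\neq j$) kills $s_{n+1}$ and hence cannot produce a useful consequence in $n$ variables. I would instead reduce to Lemma \ref{nvariables} by a polynomial substitution $x_{n+1}\mapsto p(x_1,\dots,x_n)$ for a suitably chosen monomial $p$, yielding
\[
g(x_1,\dots,x_n)\;=\;s_{n+1}(x_1,\dots,x_n,\,p(x_1,\dots,x_n))\;\in\;K\langle x_1,\dots,x_n\rangle.
\]
By Lemma \ref{nvariables}, it then suffices to prove $g\notin I_5$.

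To show $g\notin I_5$, I would evaluate in $E\otimes E_2$, which lies in $\mathfrak N_4$ by Proposition \ref{prop_DeryabinaKrasilnikov}, using the same substitution as in Lemma \ref{lemma_not_identity1}: set $x_1=e_1\otimes g_1+e_2\otimes g_2+e_3\otimes 1$ and $x_i=e_{i+2}\otimes 1$ for $2\le i\le n$. Writing $s_{n+1}$ in its standard commutator form
\[
s_{n+1}(y_1,\dots,y_{n+1})=\frac{1}{2^{(n+1)/2}}\sum_{\sigma\in S_{n+1}}(-1)^\sigma [y_{\sigma(1)},y_{\sigma(2)}]\cdots[y_{\sigma(n)},y_{\sigma(n+1)}],
\]
and expanding each commutator involving the substituted $p$ via the rule $[a,bc]=b[a,c]+[a,b]c$, the resulting sum in $E\otimes E_2$ involves only commutators $[x_i,x_j]$ of two basic types: $[x_1,x_j]=2e_1e_{j+2}\otimes g_1+2e_2e_{j+2}\otimes g_2+2e_3e_{j+2}\otimes 1$, and $[x_i,x_j]=2e_{i+2}e_{j+2}\otimes 1$ for $i,j\ge 2$. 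The Grassmann relations $e_i^2=0$ and $g_j^2=0$ force all but a small subfamily of the $(n+1)!$ signed summands to vanish: the surviving products are precisely those where exactly one commutator contributes the $g_1$-part of $x_1$ and exactly one contributes the $g_2$-part, so that the total $E_2$-factor is $g_1g_2\neq 0$.

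The main obstacle is (i) choosing $p$ so that the evaluation does not collapse to zero after the alternating signs are taken into account, and (ii) the combinatorial bookkeeping that confirms the surviving signed sum is non-zero. A natural first try is $p=x_1x_n$ (or, more symmetrically, a short commutator such as $p=[x_1,x_n]$): the extra $e_{n+2}$-factor contributed by $p$ provides the ``missing slot'' that lets the surviving products assemble into a non-zero multiple of $e_1e_2e_4e_5\cdots e_{n+2}\otimes g_1g_2$, in direct analogy with the identity $[x_1,x_2][x_1,x_3]=-8\,e_1e_2e_4e_5\otimes g_1g_2$ already exploited in the proof of Lemma \ref{lemma_not_identity1}. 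If $p$ introduces cancellations that the alternating sum does not kill, one falls back on a slightly longer monomial, but the scheme is the same; the crux is verifying that the total coefficient on $e_1e_2e_4\cdots e_{n+2}\otimes g_1g_2$ is non-zero after summing the $(-1)^\sigma$ signs over the surviving permutations.
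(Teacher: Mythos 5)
Your overall strategy --- substitute a polynomial $p(x_1,\dots,x_n)$ for $x_{n+1}$ so as to land in $K\langle x_1,\dots,x_n\rangle$, invoke Lemma \ref{nvariables}, and then certify non-membership in $I_5$ by an evaluation in $E\otimes E_2$ --- is sound in outline and consistent with the techniques used elsewhere in the paper (Lemma \ref{lemma_not_identity1} and its companion). However, as written the proposal has a genuine gap: the entire content of the lemma is concentrated in the step you defer. You never fix a specific $p$, and you explicitly leave open whether the signed sum of surviving terms is non-zero, offering to ``fall back on a slightly longer monomial'' if it is not. Since $s_{n+1}$ is alternating, cancellation is exactly the danger here, and nothing in the proposal rules it out; until a concrete $p$ is exhibited and the coefficient of $e_1e_2e_4\cdots e_{n+2}\otimes g_1g_2$ is actually computed and shown to be non-zero, you have not produced any consequence of $s_{n+1}$ in $n$ variables that is provably outside $I_5$, so the lemma is not proved. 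A secondary imprecision: with $x_1=e_1\otimes g_1+e_2\otimes g_2+e_3\otimes 1$, the surviving summands are not only those whose total $E_2$-factor is $g_1g_2$; terms with $E_2$-factor $g_1$, $g_2$ or $1$ can also survive, because one of the two occurrences of $x_1$ (the slot $x_1$ and the copy inside $p$) may contribute its $e_3\otimes 1$ part. Restricting attention to the $g_1g_2$-component is a legitimate way to prove non-vanishing, but the claim as stated is incorrect and the bookkeeping must be set up accordingly.

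For comparison, the paper avoids this computation entirely: by \cite{Stoyanova1984(N4)} the polynomial $g_2^{(n+2)}=[x_1,x_2]\,s_n(x_1,\dots,x_n)$ is a consequence of $s_{n+1}$; it already lies in $K\langle x_1,\dots,x_n\rangle$, and Theorem \ref{thm_Stoyanova-Venkova} shows that its multilinearization generates a non-trivial irreducible submodule of $\Gamma_{n+2}(\mathfrak{N}_4)$, so $g_2^{(n+2)}\notin I_5$; Lemma \ref{nvariables} then finishes the argument. If you wish to keep a computational route, the cleanest repair is to take $g_2^{(n+2)}$ (or another explicit consequence of $s_{n+1}$ in $n$ variables) as your $g$ and carry out the $E\otimes E_2$ evaluation for it in full, rather than leaving both the choice of $p$ and the non-vanishing of the alternating sum unverified.
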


\begin{proof}
By \cite{Stoyanova1984(N4)}, the polynomial $g_2^{(n+2)}$ is a consequence of $s_{n+1}$. Moreover, $g_2^{(n+2)} \in K\left\langle x_1, \dots, x_n \right\rangle$, hence it is a polynomial identity for $F_n(\mathfrak{N_4})$ if and only if $g_2^{(n+2)} \in I_5$. The last statement does not hold due to Theorem \ref{thm_Stoyanova-Venkova}. Therefore, $g_2^{(n+2)}$ is not a polynomial identity for $F_n(\mathfrak{N_4})$, which implies that $s_{n+1}$ is not a polynomial identity for $F_n(\mathfrak{N_4})$ either.
\end{proof}

We are now ready to prove Theorem \ref{thm_PI} for odd $n$.

\begin{proof} (of Theorem \ref{thm_PI} for odd $n \geq 5$): Let $m > n$. Theorem \ref{thm_Stoyanova-Venkova} gives the decomposition of $\Gamma_m(\mathfrak{N}_4)$ as an $S_m$-module. We need to check which of the generators of $\Gamma_m(\mathfrak{N}_4)$ as an $S_m$-module are polynomial identities for $F_n(\mathfrak{N}_4)$ and we need to show that all identities belong to the T-ideal
\[\langle [x_1, x_2, x_3, x_4, x_5], [x_1, x_2] \cdots [x_n, x_{n+1}][x_{n+2}, x_{n+3}], g_3^{(n+2)}\rangle^T\] or to the $T$-ideal 
\[\langle [x_1, x_2, x_3, x_4, x_5], s_{n+3}, g_3^{(n+2)}\rangle^T.\]

By the above, we have to check if the following polynomials are identities for $F_n(\mathfrak{N}_4)$: 
\begin{align*}
&g_1^{(2k-1)}, g_2^{(2k-1)}, g_3^{(2k-1)}, \text{ for } 2k-1 > n;\\
&g_1^{(2k)}, g_2^{(2k)}, g_3^{(2k)}, s_{2k}(x_1, \dots, x_{2k}), \text{ for } 2k > n.
\end{align*}

First recall by Proposition \ref{prop_identity_g3_odd} that $g_3^{(2k-1)}$ is an identity if $2k-1 \geq n+2$ (that is if $n\leq 2k-3$). 

Now, we consider the polynomials $g_1^{(2k-1)}$ and $g_2^{(2k-1)}$. If $2k-1 = n+2$, then $g_1^{(n+2)}$, $g_2^{(n+2)} \in K\langle x_1, \dots, x_n\rangle$, and they are not identities for $F_n(\mathfrak{N}_4)$ by Lemma \ref{nvariables}.

If $2k-1 > n+2$ (that is $n< 2k-3$) then Theorem 3 of \cite{Stoyanova1984(N4)} implies that $g_1^{(2k-1)}$ and $g_2^{(2k-1)}$ are consequences of $g_{3}^{(n+2)}$. 

Similarly, consider the polynomials $g_1^{(2k)}$, $g_2^{(2k)}$, and $g_3^{(2k)}$. If $2k = n+1$ then $g_1^{(n+1)}$, $g_2^{(n+1)}$, and $g_3^{(n+1)} \in K\langle x_1, \dots, x_n\rangle$ and are not identities for $F_n(\mathfrak{N}_4)$ by Lemma \ref{nvariables}.

If $2k>n+1$, then again by Theorem 3 of \cite{Stoyanova1984(N4)} we have that $g_1^{(2k)}$, $g_2^{(2k)}$, and $g_3^{(2k)}$ are consequences of $g_{3}^{(n+2)}$.

To finish the proof, we have to consider the standard polynomial $s_{2k}(x_1, \dots, x_{2k})$. When $2k \geq n+3$ then 
$$s_{2k} \in \left \langle [x_1, x_2] \cdots [x_n, x_{n+1}][x_{n+2}, x_{n+3}]\right\rangle^T. $$
In particular, $s_{2k}$ is an identity for $F_n(\mathfrak{N}_4)$ by Proposition \ref{prop_polynomialIdentity_odd_n}.

When $2k = n+1$ then
\begin{align*}
    s_{n+1}(x_1, \dots, x_{n+1}) =\frac{1}{2^{(n+1)/2}}\sum_{\sigma \in S_{n+1}} (-1)^{\sigma} [x_{\sigma(1)}, x_{\sigma(2)}] \cdots [x_{\sigma(n)}, x_{\sigma(n+1)}].
\end{align*}
It follows from Lemma \ref{lemma_not_identity4} that this polynomial is not an identity for $F_n(\mathfrak{N}_4)$. 

The above proves that for odd $n\geq 7$, either the polynomials $g_i^{(r)}$ with $r>n$ are not identities for $F_n(\mathfrak{N}_4)$ or they lie in the T-ideal generated by the polynomials in the statement of Theorem \ref{thm_PI}. In particular, this proves Theorem \ref{thm_PI} for odd $n\geq 7$.

Let us now consider $n=5$. Then, Theorem \ref{thm_Stoyanova-Venkova} implies that in addition to the above polynomials we need to check also whether $g_0^{(6)}$ is an identity for $F_5(\mathfrak{N}_4)$. By definition,
\[
g_0^{(6)} = [x_1, x_2]^3.
\]
This is a polynomial in less than $n=5$ variables, hence by Lemma \ref{nvariables} it is not a polynomial identity for $F_5(\mathfrak{N}_4)$. 
\end{proof}

\begin{remark} One can also check that the polynomials $g_i^{(r)}$ for $r \geq  n+3$ modulo $I_5$ are consequences also of the polynomial $[x_1, x_2]\cdots[x_{n+2}, x_{n+3}]$.
\end{remark}

\subsection {The case of even $n\geq 4$}
The goal of this subsection is to prove Theorem \ref{thm_PI} for even $n$. Similarly to the previous section, in the following lemmas we give several polynomials which are polynomial identities for $F_n(\mathfrak{N}_4)$ and several polynomials which are not polynomial identities for $F_n(\mathfrak{N}_4)$. 

\begin{prop}\label{prop_polynomialIdentity_even_n}
Let $n \geq 4$ be an even integer and consider the algebra $A = F_n(\mathfrak{N}_4)$. Then the polynomial
\[
f = [x_1, x_2] \cdots [x_{n+1}, x_{n+2}][x_{n+3}, x_{n+4}]
\]
is a polynomial identity for $A$.
\end{prop}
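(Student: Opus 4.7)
The plan is to follow the strategy of the proof of Proposition \ref{prop_polynomialIdentity_odd_n} almost verbatim, since the only structural change is that we now have $n+4$ rather than $n+3$ variables and the product consists of $\frac{n+4}{2} \geq 4$ commutators. Concretely, I would let $m_1, \ldots, m_{n+4}$ be monomials in $K\langle x_1, \ldots, x_n\rangle$ and show $f(m_1, \ldots, m_{n+4}) \in I_5$.

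The first step is a Leibniz expansion via $[a,bc] = b[a,c] + [a,b]c$ reducing $f(m_1, \ldots, m_{n+4})$ to a linear combination of terms of the form
\[
h = u_0\,[x_{i_1}, x_{i_2}]\,u_1\,[x_{i_3}, x_{i_4}]\,u_2 \cdots u_{t-1}\,[x_{i_{n+3}}, x_{i_{n+4}}]\,u_t,
\]
with $u_j$ monomials in $x_1, \ldots, x_n$ and $i_1, \ldots, i_{n+4} \in \{1, \ldots, n\}$. Next, I would sweep the interior monomials $u_1, \ldots, u_{t-1}$ to the right of all commutators using $c[a,b] = [a,b]c - [a,b,c]$. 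Exactly as in the odd case, every correction term acquires at least one triple commutator; terms with two or more triples lie in $I_3 I_3 \subset I_5$, $I_3 I_4 \subset I_6$, or $I_4 I_4 \subset I_6$ and can be discarded, while terms with a single triple commutator can be moved to the leftmost position using $[L_2, L_3] \subset L_5$ from Theorem \ref{thm_products_of_commutators}(iii). The remaining product of $\frac{n+4}{2} - 1$ double commutators then uses only $n+2$ indices from $\{1, \ldots, n\}$, contains a repeated index, lies in $I_3$, and is absorbed by $I_3 I_3 \subset I_5$. Thus, modulo $I_5$,
\[
h \equiv u_0\,[x_{i_1}, x_{i_2}][x_{i_3}, x_{i_4}]\cdots [x_{i_{n+3}}, x_{i_{n+4}}]\,u_1 \cdots u_t.
\]

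The substantive work is to prove the even-$n$ analog of Lemma \ref{lemma_I5}: whenever $i_1, \ldots, i_{n+4} \in \{1, \ldots, n\}$, the pure product $[x_{i_1}, x_{i_2}] \cdots [x_{i_{n+3}}, x_{i_{n+4}}]$ belongs to $I_5$. Distributing $n+4$ indices among only $n$ letters forces at least four repetitions, one more than in the odd case. I would split into the same two situations: either the product factors as $h_1 h_2$ with each $h_i$ a product of double commutators containing a repeated index, so that $h_1, h_2 \in I_3$ and $h_1 h_2 \in I_3 I_3 \subset I_5$ by Theorem \ref{thm_products_of_commutators}(iv); or no such factorization exists, in which case I rearrange commutators modulo $I_5$ using $[L_2, L_2] \subset L_4$, the centrality of length-$4$ commutators modulo $I_5$, and Theorem \ref{thm_products_of_commutators}(vi) for the three-variable subcase, in order to bring a repeated index into an adjacent commutator and reduce to the first situation.

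The main obstacle is verifying this last combinatorial case analysis in complete generality. The odd case of Lemma \ref{lemma_I5} required $n+3 \geq 8$ to handle the pathological subcase where repetitions cluster in three consecutive commutators; here $n+4 \geq 8$ holds for all even $n \geq 4$, and with one extra repetition and an extra commutator to spare we always have enough room to bring repeated indices into adjacent position via the same modulo-$I_5$ manipulations. Once this even analog of Lemma \ref{lemma_I5} is secured, the proposition follows by the identical wrap-up as in Proposition \ref{prop_polynomialIdentity_odd_n}.
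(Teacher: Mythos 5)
You correctly identify that the load-bearing step in your plan is an even-$n$ analogue of Lemma \ref{lemma_I5}, but you do not actually prove it: the final paragraph only asserts that ``with one extra repetition and an extra commutator to spare we always have enough room,'' without carrying out the case analysis. For four excess indices the configurations are genuinely more varied than in the three-repetition case of Lemma \ref{lemma_I5} (for instance a pair of indices occupying two full commutators, or a single index occurring three times), so this cannot be waved through by analogy; as written, it is a gap.

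The missing observation is that no new combinatorics is needed at all. Since $n \geq 4$ is even, $m := n+1$ is odd with $m \geq 5$, and your polynomial $f = [x_1,x_2]\cdots[x_{n+3},x_{n+4}]$ is precisely the polynomial $[x_1,x_2]\cdots[x_m,x_{m+1}][x_{m+2},x_{m+3}]$ of Proposition \ref{prop_polynomialIdentity_odd_n} for rank $m$. Hence $f$ is an identity for $F_{n+1}(\mathfrak{N}_4)$, and therefore for $F_n(\mathfrak{N}_4)$, which satisfies every identity of $F_{n+1}(\mathfrak{N}_4)$. This two-line deduction is exactly the paper's proof. The same remark disposes of your ``even analogue'' of Lemma \ref{lemma_I5}: the indices satisfy $i_1,\dots,i_{n+4}\in\{1,\dots,n\}\subset\{1,\dots,n+1\}$ and $n+4=(n+1)+3$, so the product of $(n+4)/2$ double commutators lies in $I_5$ by Lemma \ref{lemma_I5} applied with $n+1$ in place of $n$. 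I recommend replacing the entire argument by this reduction.
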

\begin{proof}
Let $m = n+1$. Then, $m$ is an odd integer and $m \geq 5$. Thus, by Proposition \ref{prop_polynomialIdentity_odd_n}
\[
f = [x_1, x_2] \cdots [x_{m}, x_{m+1}][x_{m+2}, x_{m+3}]
\]
is a polynomial identity for $F_{m}(\mathfrak{N}_4)$. Hence, it is also a polynomial identity for $F_{m-1}(\mathfrak{N}_4) = F_{n}(\mathfrak{N}_4)$. This proves the statement.
\end{proof}

\begin{remark}
    If $n=2$ the above statement is no longer true. Indeed, $n+4 = 6$ and $[x_1,x_2][x_3,x_4][x_5,x_6]$ is not an identity to $F_2(\mathfrak{N}_4)$, since the polynomial $g_0^{(6)} = [x_1,x_2]^3$ is a consequence of it and is not an identity for $F_2(\mathfrak{N}_4)$.
\end{remark}

\begin{prop}\label{prop_polynomialIdentity2_even_n}Let $n \geq 4$ be an even integer and consider the algebra $A = F_n(\mathfrak{N}_4)$. Then the polynomial
\[
f = [x_1, x_2] \cdots [x_{n-1}, x_{n}][x_{n+1}, x_{n+2},x_{n+3}]
\]
is a polynomial identity for $A$.
\end{prop}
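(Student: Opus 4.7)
The plan is to follow closely the arguments in Lemma \ref{lemma_I5} and Proposition \ref{prop_polynomialIdentity_odd_n}, with the single triple commutator at the end of $f$ playing the role that the final double commutator plays in the odd case. Given monomials $m_1,\dots,m_{n+3}\in K\langle x_1,\dots,x_n\rangle$, I would first expand $f(m_1,\dots,m_{n+3})$ using the derivation property $[a,bc]=[a,b]c+b[a,c]$ and Lemma \ref{CommutatorOfMonomials} into a linear combination of products of pure commutators with monomial prefactors. The error term produced by Lemma \ref{CommutatorOfMonomials}(ii), which lies in $I_2 I_2$, multiplied by the $n/2$ other doubles lands in $I_5$ thanks to the inclusions $I_2 I_3\subset I_4$, $I_3 I_3\subset I_5$, and the centrality of $I_4$ modulo $I_5$. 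By the same centrality considerations, all monomial prefactors can be pulled outside the commutator chain modulo $I_5$: each swap of a monomial past a double commutator produces a triple, which, combined with the triple already present in $f$, yields a product in $I_3\cdot I_3\subset I_5$.

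After this reduction it suffices to prove the following key claim, which is the analog of Lemma \ref{lemma_I5}: for any indices $i_1,\dots,i_{n+3}\in\{1,\dots,n\}$,
\[
h := [x_{i_1},x_{i_2}]\cdots[x_{i_{n-1}},x_{i_n}][x_{i_{n+1}},x_{i_{n+2}},x_{i_{n+3}}] \in I_5.
\]
By pigeonhole the $n+3$ indices in $\{1,\dots,n\}$ produce at least three repetitions. If some two doubles already share a variable, then the corresponding sub-product $[x_a,x_b][x_a,x_c]$ lies in $I_3$ (it is an identity of the Grassmann algebra, and hence of $\mathfrak{N}_2$). Since doubles commute modulo $I_4$ and $I_4$ is central modulo $I_5$, the entire product of doubles then lies in $I_3$, and $h\in I_3\cdot I_3\subset I_5$. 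Otherwise $\{i_1,\dots,i_n\}=\{1,\dots,n\}$, so all three indices of the triple coincide with indices already appearing in the doubles. I then permute doubles modulo $I_5$ to bring the double whose two variables match the inner pair of the triple adjacent to it, and apply Lemma \ref{consequencesN4(2)}(vi)--(vii). When the inner pair of the triple is split between two different doubles, I rewrite it using Jacobi as $[x_a,x_b,x_c]=-[x_b,x_c,x_a]-[x_c,x_a,x_b]$, and use further specializations of identity~$(\ref{conseqdeg7})$, such as $x_5=x_2$, $x_6=x_3$, to reduce every resulting term modulo $I_5$ to a configuration covered by (vii).

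The main obstacle is the bookkeeping in this final sub-case, particularly when the three indices of the triple are scattered across three different doubles (possible once $n\geq 6$). Handling this requires deriving from $(\ref{conseqdeg7})$ and Lemma \ref{consequencesN4}(iv)--(v), by carefully chosen substitutions, enough consequences in $I_5$ to cover every such configuration and to reduce it to the canonical form of Lemma \ref{consequencesN4(2)}(vii). The crucial systematic point is that every rearrangement modulo $I_5$ is controlled by the centrality of $I_4$ modulo $I_5$ together with $I_3\cdot I_3\subset I_5$ and $I_3\cdot I_4\subset I_6\subset I_5$, so the procedure never degrades the containment, and once the case analysis is completed the proposition follows from the initial reduction.
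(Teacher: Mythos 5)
Your outline matches the paper's proof in all essentials: expand $f$ on monomials, pull the monomial prefactors out of the chain modulo $I_5$ using $I_3I_3\subset I_5$ and the centrality of $I_4$, and then dispose of the resulting pure products $[x_{i_1},x_{i_2}]\cdots[x_{i_{n-1}},x_{i_n}][x_{i_{n+1}},x_{i_{n+2}},x_{i_{n+3}}]$ by a case analysis on the repeated indices, using Lemma \ref{consequencesN4} together with identities (vi) and (vii) of Lemma \ref{consequencesN4(2)}. Two points need repair. First, your justification for discarding the $I_2I_2$ error term of Lemma \ref{CommutatorOfMonomials}(ii) is wrong as stated: that term, multiplied by the $n/2$ remaining double commutators, is a product of $(n+4)/2$ double commutators, and no combination of the inclusions $I_2I_3\subset I_4$, $I_3I_3\subset I_5$ and the centrality of $I_4$ modulo $I_5$ places a product of double commutators in $I_5$ --- indeed $I_2^k\not\subset I_3$ for any $k$, since $[x_1,x_2]\cdots[x_{2k-1},x_{2k}]$ is not an identity of the Grassmann algebra. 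What actually kills these terms is that all entries are monomials in only $n$ variables, i.e., Proposition \ref{prop_polynomialIdentity_even_n}; this is exactly what the paper invokes at this step, and you should cite it instead of the inclusions.

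Second, the sub-case you yourself flag as ``the main obstacle'' --- the three entries of the triple scattered over distinct doubles --- is genuinely left open in your write-up, and a proof must close it; fortunately it closes in one move. Since the triple commutator supplies a factor lying in $I_3$, any error made by exchanging entries between two double commutators is absorbed: the exchange error $[x_a,x_b][x_c,x_d]+[x_a,x_d][x_c,x_b]$ is an identity of the Grassmann algebra and hence lies in $I_3$, so after multiplication by the triple it falls into $I_3I_3\subset I_5$. Therefore, modulo $I_5$, you may freely regroup the $n$ pairwise distinct double-entries so that the two inner entries of the triple sit in a single double and the outer entry in another; that is precisely the configuration of Lemma \ref{consequencesN4(2)}(vii), while (vi) (after a Jacobi rewriting $[x_a,x_b,x_a]=-[x_b,x_a,x_a]$) covers the triples with a repeated entry. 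With that paragraph added your argument coincides with the paper's, which organizes the same terminal case analysis by the multiplicities of the repeated variables (one variable occurring four times; one occurring three times and another twice; three variables occurring twice each) rather than by whether two doubles share an entry.
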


\begin{proof}
    Let $m_1, \dots, m_{n+3}$ be monomials in $K\langle x_1, \dots, x_n\rangle$. We need to show that $f(m_1,\dots,m_{n+3}) = 0$ modulo $I_5$.

    Notice that $[ab,u,v] = a[b,u,v]+[a,v][b,u]+[a,u][b,v] + [a,u,v]b$, for any $a,b,u,v \in K\langle X \rangle$.

    Using this fact and Proposition \ref{prop_polynomialIdentity_even_n}, we obtain that modulo $I_5$,  $f(m_1,\dots,m_{n+3})$ is a linear combination of elements of type
    \[u_0[x_{i_1},x_{i_2}]
    u_1[x_{i_3},x_{i_4}]
    u_2 \cdots u_{\frac{n}{2} -1}
    [x_{i_{n-1}},x_{i_n}]
    u_{\frac{n}{2}}[x_{i_{n+1}},x_{i_{n+2}},x_{i_{n+3}}]u_{\frac{n}{2}+1}.\]
    Notice that up to elements of $I_3$, we can permute each of the simple commutators with the monomial $u_i$. And since $I_3I_3\subseteq I_5$, we obtain that each element of the above type is equivalent modulo $I_5$ to an element of the type 
    \[u_0u_1 \cdots u_{\frac{n}{2}}[x_{i_1},x_{i_2}]\cdots [x_{i_{n-1}},x_{i_n}][x_{i_{n+1}}, x_{i_{n+2}}, x_{i_{n+3}}]u_{\frac{n}{2}+1}.\]
    But $\{i_1, \dots, i_{n+3}\}\subseteq \{1, \dots, n\}$. Then we must have at least three repeated indexes. 
    
    If we have four occurrences of $x_1$, we obtain an element of $I_5$ due to Lemma \ref{consequencesN4}.

    If we have three occurrences of $x_1$ and two occurrences of $x_2$, we obtain an element of $I_5$ due to Lemma \ref{consequencesN4} and identity (vi) of Lemma \ref{consequencesN4(2)}.

    If we have two occurrences of $x_1$,  two occurrences of $x_2$ and  two occurrences of $x_3$, we obtain an element of $I_5$ due to Lemma \ref{consequencesN4} and of identity (vii) of Lemma \ref{consequencesN4(2)}.

    All cases have been considered and the proposition is proved.   
\end{proof}

\begin{remark}
    The above proposition fails for $n=2$. Indeed, if $n=2$, then $n+3 = 5$ and $[x_1,x_2][x_3,x_4,x_5]$ is not an identity for $F_2(\mathfrak{N_4})$, since the polynomial $g_0^{(5)} = [x_2,x_1][x_2,x_1,x_1]$ is a consequence of it and it is not an identity for $F_2(\mathfrak{N}_4)$.
\end{remark}

\begin{lemma}
    Let n be an even integer. Then the polynomial 
    \[f = [x_1,x_2]\cdots[x_{n+1},x_{n+2}]\]
    is not a polynomial identity for $F_n(\mathfrak N_4)$.
\end{lemma}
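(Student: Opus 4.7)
The plan is to adapt the strategy used for the odd case in Lemma \ref{lemma_not_identity1}. I would exhibit a specific consequence $g$ of $f$ that is a polynomial in exactly $n$ variables and then show that $g\notin I_5$ by displaying a nonzero evaluation on the algebra $E\otimes E_2$, which lies in $\mathfrak{N}_4$ by Proposition \ref{prop_DeryabinaKrasilnikov}.

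The candidate consequence, obtained from $f$ by substituting $x_3\mapsto x_1$ and $x_5\mapsto x_1$ and renaming, is
\[
g = [x_1,x_2][x_1,x_3][x_1,x_4][x_5,x_6][x_7,x_8]\cdots[x_{n-1},x_n].
\]
Since $n$ is even, $n\geq 4$, this $g$ has $(n+2)/2$ commutators and lies in $K\langle x_1,\dots,x_n\rangle$. By Lemma \ref{nvariables}, $g$ is an identity for $F_n(\mathfrak{N}_4)$ if and only if $g\in I_5$. So it suffices to produce a nonzero value of $g$ on $E\otimes E_2$, which will simultaneously show $g\notin I_5$ and hence that $f$ is not an identity for $F_n(\mathfrak{N}_4)$.

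For the evaluation I would use the same assignment as in Lemma \ref{lemma_not_identity1}: $x_1 = e_1\otimes g_1 + e_2\otimes g_2 + e_3\otimes 1$ and $x_i = e_{i+2}\otimes 1$ for $i\geq 2$. The computation proceeds in three steps. The product $[x_1,x_2][x_1,x_3]$ was already computed in that lemma and equals $-8\,e_1e_2e_4e_5\otimes g_1g_2$. Next, multiplying by $[x_1,x_4] = 2(e_1e_6\otimes g_1 + e_2e_6\otimes g_2 + e_3e_6\otimes 1)$, the first two summands vanish because $e_1$ (respectively $e_2$) appears twice in $E$, so the result collapses to a nonzero scalar multiple of $e_1e_2e_3e_4e_5e_6\otimes g_1g_2$. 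Finally each remaining factor $[x_{2k-1},x_{2k}] = 2\,e_{2k+1}e_{2k+2}\otimes 1$ contributes disjoint Grassmann indices, and all factors multiply to a nonzero scalar multiple of $e_1e_2\cdots e_{n+2}\otimes g_1g_2$ in $E\otimes E_2$.

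There is no real conceptual obstacle; the main task is to pick the right consequence $g$ so that (a) it still has $n$ variables (the extra two slots in $f$ relative to the odd case must be absorbed by repeating $x_1$ three times rather than twice) and (b) the resulting products in $E$ do not annihilate. The choice above satisfies both.
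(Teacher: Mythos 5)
Your proposal is correct and matches the paper's proof essentially verbatim: the paper uses exactly the same consequence $g = [x_1,x_2][x_1,x_3][x_1,x_4][x_5,x_6]\cdots[x_{n-1},x_n]$, the same evaluation $x_1 = e_1\otimes g_1 + e_2\otimes g_2 + e_3\otimes 1$, $x_i = e_{i+2}\otimes 1$ in $E\otimes E_2$, and arrives at the same nonzero value $-2^{(n+4)/2}e_1\cdots e_{n+2}\otimes g_1g_2$.
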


\begin{proof}
    The polynomial 
    \[g = [x_1,x_2][x_1,x_3][x_1,x_4][x_5,x_6]\cdots [x_{n-1},x_n]\]
    is a consequence of $f$. Once we show that $g$ is not a polynomial identity of $F_n(\mathfrak N_4)$, the proof will be completed.

    Now we proceed as in the proof of Lemma \ref{lemma_not_identity1}. By considering the evaluations in $E\otimes E_2$
    \begin{align*}
        &x_1 = e_1\otimes g_1 + e_2 \otimes g_2 + e_3 \otimes 1\\
        &x_i = e_{i+2}\otimes 1, \text{ for } i \geq 2.
    \end{align*} we obtain 
    $g = -2^{\frac{n+4}{2}}e_1\cdots e_n \otimes g_1g_2$, which is nonzero in $E\otimes E_2$. As a consequence, $f$ is not a polynomial identity for $F_n(\mathfrak N_4)$.    
\end{proof}

The next proposition shows that the generator $(3)$ Theorem \ref{thm_PI} (ii) is an identity for $F_n(\mathfrak{N}_4)$.

\begin{prop} \label{g3n+2_even}
    If $n \geq 4$ is an even integer, then $g_{3}^{(n+2)}$ is a polynomial identity for $F_n(\mathfrak{N}_ 4)$.
\end{prop}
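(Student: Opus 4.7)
The proof parallels that of Proposition \ref{prop_identity_g3_odd} for the odd case. Let $m_1, \dots, m_{n+1}$ be monomials in $K\langle x_1, \dots, x_n\rangle$ and write $m_i = x_i^1 \cdots x_i^{q_i}$. I need to show $g_3^{(n+2)}(m_1, \dots, m_{n+1}) \equiv 0 \pmod{I_5}$. Using Lemma \ref{CommutatorOfMonomials}(i), I expand each of the $(n-2)/2$ simple commutators $[m_{\sigma(2l-1)}, m_{\sigma(2l)}]$, and I expand the two inner brackets $[m_{\sigma(n-1)}, m_{\sigma(n)}]$ and $[m_{\sigma(n+1)}, m_1]$ of the nested outer bracket. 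This selects distinguished letters $x_k^{i_k}$ from each $m_k$ ($k=1,\dots,n+1$) together with an extra index $j$ for a distinguished letter $x_1^j$ coming from the second occurrence of $m_1$. Then I expand the outer bracket by iterated Leibniz; the dominant summand has shape $[[x_a^p, x_b^q], [x_c^r, x_1^s]]\in L_4$.

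Next, I reduce modulo $I_5$ to pull all spectator letters to the outside of the commutator part. The two key facts are: first, $L_4$ is central modulo $I_5$, since $[L_4, K\langle X \rangle] \subset L_5 \subset I_5$; second, every correction term arising from commuting a spectator $v$ past a simple commutator $[x^p, x^q]$ produces a factor $[x^p, x^q, v] \in L_3$, and combined with the remaining $(n-2)/2-1$ simple commutators and the surviving length-four bracket, the correction lives in $L_3 \cdot I_2^k \cdot L_4 \subset I_4 \cdot I_4 \subset I_6 \subset I_5$, by iterated use of Theorem \ref{thm_products_of_commutators}(ii) and (iv). To justify that the spectator letters outside the commutator part can be freely reordered modulo $I_5$, I invoke Proposition \ref{prop_polynomialIdentity_even_n}: each swap of two spectators introduces an extra simple commutator, and the resulting product of $n/2$ simple commutators times the length-four bracket, expanded via $[[a,b],[c,d]] = [a,b][c,d] - [c,d][a,b]$, becomes a linear combination of products of $(n+4)/2$ simple commutators in $n$ variables, which lies in $I_5$ by that proposition.

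After these reductions, for fixed $i_1, \dots, i_{n+1}$ and $j$, the contribution modulo $I_5$ equals
\[
\beta \sum_{\sigma \in S_{n+1}} (-1)^\sigma [x_{\sigma(1)}^{i_{\sigma(1)}}, x_{\sigma(2)}^{i_{\sigma(2)}}] \cdots [x_{\sigma(n-3)}^{i_{\sigma(n-3)}}, x_{\sigma(n-2)}^{i_{\sigma(n-2)}}] \bigl[[x_{\sigma(n-1)}^{i_{\sigma(n-1)}}, x_{\sigma(n)}^{i_{\sigma(n)}}], [x_{\sigma(n+1)}^{i_{\sigma(n+1)}}, x_1^j]\bigr],
\]
where $\beta$ is a monomial independent of $\sigma$. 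The $n+1$ distinguished letters $x_k^{i_k}$ all lie in $\{x_1, \dots, x_n\}$, so by pigeonhole two coincide, say $x_a^{i_a} = x_b^{i_b}$. Composing $\sigma$ with the transposition $(a,b) \in S_{n+1}$ preserves the bracketed expression but negates the sign, so the alternating sum vanishes; summing over choices of indices gives $g_3^{(n+2)}(m_1, \dots, m_{n+1}) \equiv 0 \pmod{I_5}$.

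The main technical obstacle is the Leibniz expansion of the nested outer bracket $[[m_{\sigma(n-1)}, m_{\sigma(n)}], [m_{\sigma(n+1)}, m_1]]$, which is more intricate than the triple-commutator expansion used in the odd case. One must verify in detail that each of the many correction terms falls into $I_5$, using a combined appeal to $[L_4, K\langle X\rangle] \subset L_5$, $I_3 \cdot I_2 \subset I_4$ and $I_4 \cdot I_4 \subset I_6$ from Theorem \ref{thm_products_of_commutators}, Lemma \ref{consequencesN4}, Lemma \ref{consequencesN4(2)}, and Proposition \ref{prop_polynomialIdentity_even_n}. Once this bookkeeping is carried out, the final pigeonhole-and-alternation step is immediate.
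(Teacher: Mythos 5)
Your proposal follows the same route as the paper's proof: Leibniz expansion of the commutators of monomials, extraction of the spectator letters modulo $I_5$ using centrality of $L_4$ and the commutator-ideal calculus, and a final pigeonhole-plus-alternation argument on the $n+1$ distinguished letters. (The paper packages that last step as a separate preliminary claim about evaluations by pure variables, split into two cases according to whether the repetition involves $x_1$; your single transposition argument $\sigma\mapsto(a\,b)\circ\sigma$ is equivalent and slightly cleaner.)

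One point needs repair. The correction terms you dispose of via $L_3\cdot I_2^k\cdot L_4\subset I_4\cdot I_4\subset I_6$ all carry a surviving length-four bracket, but the Leibniz expansion of the nested bracket $[[m_{\sigma(n-1)},m_{\sigma(n)}],[m_{\sigma(n+1)},m_1]]$ also produces terms with no $L_4$ factor at all, e.g.\ $[u,[y,z]][v,w]\in L_3\cdot L_2$; prefixed by the $(n-2)/2$ leading double commutators these give products of $n/2$ double commutators and one triple commutator, which the general inclusions of Theorem \ref{thm_products_of_commutators} only place in $I_2^{n/2}\cdot I_3\subset I_4$, not in $I_5$. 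To push them into $I_5$ one needs Proposition \ref{prop_polynomialIdentity2_even_n} (that $[x_1,x_2]\cdots[x_{n-1},x_n][x_{n+1},x_{n+2},x_{n+3}]$ is an identity for $F_n(\mathfrak{N}_4)$), which is exactly what the paper invokes at this step and which rests on Lemmas \ref{consequencesN4} and \ref{consequencesN4(2)}. Since you cite those two lemmas in your closing paragraph, the missing ingredient is within reach, but as written your stated containments do not cover all of the correction terms you defer to ``bookkeeping.''
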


    \begin{proof}
The proof  is similar to the proof of Proposition \ref{prop_identity_g3_odd}.

We first claim that $g_{3}^{(n+2)}$ vanishes under any evaluation by pure variables of $F_n(\mathfrak{N}_ 4)$.

Indeed, notice that $g_3^{(n+2)}$ is a multihomogeneous polynomial in $n+1$ variables of multidegree $(2, 1, \dots, 1)$. Also, it is alternating in the variables $x_2, \dots, x_{n+1}$. In particular, it vanishes whenever we evaluate the variables $x_2, \dots, x_{n+1}$ by  repeated elements. Since in $F_n(\mathfrak{N}_4)$ we have only $n$ variables, any evaluation of the variables of $g_{3}^{(n+2)}$ by pure variables of $F_n(\mathfrak{N}_4)$ will result in a repetition. So it is enough to consider the case in which the repeating variable is $x_1$. Also, without loss of generality, we may assume $x_{n+1}$ is evaluated by $x_1$.

Write $g_{3}^{(n+2)} = \sum_{\sigma\in S_{n+1}} (-1)^{\sigma}p_\sigma$, where 
    \[p_\sigma = [x_{\sigma(1)},x_{\sigma(2)}]\cdots [x_{\sigma(n-3)},x_{\sigma(n-2)}][[x_{\sigma(n-1)},x_{\sigma(n)}][x_{\sigma(n+1)},x_1]].\]

    When substituting $x_{n+1}$ by $x_1$, for each $\sigma \in S_{n+1}$, the permutation $\tau = \sigma(1 \ n+1)$ is such that $(-1)^{\sigma} = - (-1)^{\tau}$ and $p_{\sigma} = p_{\tau}$. As a consequence, after such evaluation, $g_3^{(n+2)}$ vanishes. 

This proves our claim.

Let now $m_1, \dots, m_{n+1}$ be monomials in $K\left\langle x_1, \dots, x_n\right\rangle$. We have to show that
\[
g_3^{(n+2)}(m_1, \dots, m_{n+1}) \equiv_{I_5} 0.
\]
We set $m_i = x_i^1 \cdots x_i^{q_i}$ for each $i = 1, \dots, n+1$, where $x_i^j \in \{x_1, \dots, x_n\}$ for each $i$ and $j$. 

One can prove as a consequence of Lemma \ref{CommutatorOfMonomials} that for any $a_i, b_i, c_i, d_i$ in any associative algebra

\begin{equation}\label{identity_4_assoc}
\begin{aligned} 
 &[[a_1\cdots a_k, b_1 \cdots b_l],[c_1 \cdots c_t,d_1\cdots d_u]] = \\ \sum_{i =1}^k\sum_{j = 1}^l \sum_{r=1}^t\sum_{s=1}^u
 & p(i,j,s)[[a_i, b_j],[ c_r,d_s]]q(i,j,r,s) + f,
\end{aligned}
\end{equation}
where $ p(i,j,r,s) = a_1\cdots a_{i-1} b_1 \cdots b_{j-1}c_1 \cdots c_{r-1}d_1\cdots d_{s-1}$, \\$q(i,j,r,s)d_{s+1}\cdots d_{u}c_{r+1}\cdots c_m b_{j+1}\cdots b_l a_{i+1}\cdots a_k$ and $f \in I_2^3+I_2I_3 \mod I_5$.

By substituting $m_1, \dots, m_{n+1}$ in $g_3^{(n+2)}$ and using Lemma \ref{CommutatorOfMonomials}, Equation (\ref{identity_4_assoc}) and Propositions \ref{prop_polynomialIdentity_even_n} and \ref{prop_polynomialIdentity2_even_n}, we obtain
\begin{align*}
&g_3^{(n+2)}(m_1, \dots, m_{n+1}) \equiv_{I_5} \sum_{\sigma \in S_{n+1}} \sum_{i_1, \dots, i_{n+1}}^{q_1, \dots, q_{n+1}} \sum_{j=1}^{q_1}(-1)^{\sigma}\\
&x_{\sigma(1)}^1\cdots x_{\sigma(1)}^{i_{\sigma(1)-1}}x_{\sigma(2)}^1\cdots x_{\sigma(2)}^{i_{\sigma(2)-1}}[x_{\sigma(1)}^{i_{\sigma(1)}}, x_{\sigma(2)}^{i_{\sigma(2)}}] x_{\sigma(2)}^{i_{\sigma(2)+1}} \cdots x_{\sigma(2)}^{q_{\sigma(2)}} 
x_{\sigma(1)}^{i_{\sigma(1)+1}} \cdots x_{\sigma(1)}^{q_{\sigma(1)}} \times\cdots \\
& \cdots \\
&x_{\sigma(n-1)}^1\cdots x_{\sigma(n-1)}^{i_{\sigma(n-1)-1}}x_{\sigma(n)}^1\cdots x_{\sigma(n)}^{i_{\sigma(n)-1}}x_{\sigma(n+1)}^1\cdots x_{\sigma(n+1)}^{i_{\sigma(n+1)-1}} x_1^1 \cdots x_1^{j-1}
[[x_{\sigma(n-1)}^{i_{\sigma(n-1)}},x_{\sigma(n)}^{i_{\sigma(n)}}],[ x_{\sigma(n+1)}^{i_{\sigma(n+1)}}, x_1^j]] \times\\
&x_1^{j+1} \cdots x_1^{q_1}
x_{\sigma(n+1)}^{i_{\sigma(n+1)+1}} \cdots x_{\sigma(n+1)}^{q_{\sigma(n+1)}} 
x_{\sigma(n)}^{i_{\sigma(n)+1}} \cdots x_{\sigma(n)}^{q_{\sigma(n)}}x_{\sigma(n-1)}^{i_{\sigma(n-1)+1}} \cdots x_{\sigma(n-1)}^{q_{\sigma(n-1)}}.
\end{align*}

Let us now fix $i_1, \dots, i_{n+1}$ and $j$ and consider the polynomial
\begin{align*}
&h_{i_1, \dots, i_{n+1}, j} = \sum_{\sigma \in S_{n+1}}  (-1)^{\sigma}\\
&x_{\sigma(1)}^1\cdots x_{\sigma(1)}^{i_{\sigma(1)-1}}x_{\sigma(2)}^1\cdots x_{\sigma(2)}^{i_{\sigma(2)-1}}[x_{\sigma(1)}^{i_{\sigma(1)}}, x_{\sigma(2)}^{i_{\sigma(2)}}] x_{\sigma(2)}^{i_{\sigma(2)+1}} \cdots x_{\sigma(2)}^{q_{\sigma(2)}} 
x_{\sigma(1)}^{i_{\sigma(1)+1}} \cdots x_{\sigma(1)}^{q_{\sigma(1)}} \cdots \\
& \cdots \\
&x_{\sigma(n-1)}^1\cdots x_{\sigma(n-1)}^{i_{\sigma(n-1)-1}}x_{\sigma(n)}^1\cdots x_{\sigma(n)}^{i_{\sigma(n)-1}}x_{\sigma(n+1)}^1\cdots x_{\sigma(n+1)}^{i_{\sigma(n+1)-1}} x_1^1 \cdots x_1^{j-1}
[[x_{\sigma(n-1)}^{i_{\sigma(n-1)}},x_{\sigma(n)}^{i_{\sigma(n)}}],[ x_{\sigma(n+1)}^{i_{\sigma(n+1)}}, x_1^j]] \times\\
&x_1^{j+1} \cdots x_1^{q_1}
x_{\sigma(n+1)}^{i_{\sigma(n+1)+1}} \cdots x_{\sigma(n+1)}^{q_{\sigma(n+1)}} 
x_{\sigma(n)}^{i_{\sigma(n)+1}} \cdots x_{\sigma(n)}^{q_{\sigma(n)}}x_{\sigma(n-1)}^{i_{\sigma(n-1)+1}} \cdots x_{\sigma(n-1)}^{q_{\sigma(n-1)}}.
\end{align*}

Using the property $I_3I_3 \subset I_5$, up to an element of $I_5$, we can move the variables which are outside the commutators to the left of the double commutators. Furthermore, due to Proposition \ref{prop_polynomialIdentity_even_n} (or \ref{prop_polynomialIdentity2_even_n}), we can exchange the places of variables outside the commutators. Therefore, we obtain
\begin{align*}
&h_{i_1, \dots, i_{n+1}, j} \equiv_{I_5} \sum_{\sigma \in S_{n+1}}  (-1)^{\sigma} \alpha_{\sigma} 
[x_{\sigma(1)}^{i_{\sigma(1)}},x_{\sigma(2)}^{i_{\sigma(2)}}]
\cdots [x_{\sigma(n-2)}^{i_{\sigma(n-2)}}, x_{\sigma(n-1)}^{i_{\sigma(n-1)}}] [[x_{\sigma(n-1)}^{i_{\sigma(n-1)}}, x_{\sigma(n)}^{i_{\sigma(n)}}],[x_{\sigma(n+1)}^{i_{\sigma(n+1)}}, x_1^j]]\\
&x_1^{j+1} \cdots x_1^{q_1}x_{\sigma(n+1)}^{i_{\sigma(n+1)+1}} \cdots x_{\sigma(n+1)}^{q_{\sigma(n+1)}} 
x_{\sigma(n)}^{i_{\sigma(n)+1}} \cdots x_{\sigma(n)}^{q_{\sigma(n)}}x_{\sigma(n-1)}^{i_{\sigma(n-1)+1}} \cdots x_{\sigma(n-1)}^{q_{\sigma(n-1)}},
\end{align*}
where 
\begin{align*}
  & \alpha_{\sigma} =  x_{\sigma(1)}^1\cdots \widehat{x_{\sigma(1)}^{i_{\sigma(1)}}} \cdots x_{\sigma(1)}^{q_{\sigma(1)}} \cdots x_{\sigma(n-1)}^1\cdots \widehat{x_{\sigma(n-1)}^{i_{\sigma(n-1)}}} \cdots x_{\sigma(n-1)}^{q_{\sigma(n-1)}} \times\\
&x_{\sigma(n)}^1\cdots
x_{\sigma(n)}^{i_{\sigma(n)-1}}x_{\sigma(n+1)}^1\cdots x_{\sigma(n+1)}^{i_{\sigma(n+1)-1}}x_1^1 \cdots x_1^{j-1}.
\end{align*}

The next step is to show that we can move to the left also the variables
\[
x_1^{j+1} \cdots x_1^{q_1}x_{\sigma(n+1)}^{i_{\sigma(n+1)+1}} \cdots x_{\sigma(n+1)}^{q_{\sigma(n+1)}} 
x_{\sigma(n)}^{i_{\sigma(n)+1}} \cdots x_{\sigma(n)}^{q_{\sigma(n)}}.
\]
This follows from the fact that the Jacobi identity implies $[[a,b],[c,d]]  = [a,b,c,d]+[a,b,d,c]$ for any $a,b,c,d$, and modulo $I_5$, we can move the monomials to the left of the big commutator.

Since by Proposition \ref{prop_polynomialIdentity_even_n} (or \ref{prop_polynomialIdentity2_even_n}) we can switch the places of variables outside commutators, we obtain

\begin{align} \label{eq_g_end}
&h_{i_1, \dots, i_{n+1},j}  \equiv_{I_5} \beta\sum_{\sigma \in S_{n+1}}  (-1)^{\sigma}
[x_{\sigma(1)}^{i_{\sigma(1)}},x_{\sigma(2)}^{i_{\sigma(2)}}]
\cdots [x_{\sigma(n-3)}^{i_{\sigma(n-3)}}, x_{\sigma(n-2)}^{i_{\sigma(n-2)}}] [[x_{\sigma(n-1)}^{i_{\sigma(n-1)}},x_{\sigma(n)}^{i_{\sigma(n)}}],[ x_{\sigma(n+1)}^{i_{\sigma(n+1)}}, x_1^j]],
\end{align}
where
\[
\beta = x_{1}^1\cdots \widehat{x_{1}^{i_1}} \cdots x_{1}^{q_{1}} \cdots x_{n+1}^1\cdots \widehat{x_{n+1}^{i_{n+1}}} \cdots x_{n+1}^{q_{n+1}}x_{1}^1\cdots \widehat{x_{1}^{j}} \cdots x_{1}^{q_{1}}.
\]
Notice that $h_{i_1, \dots, i_{n+1},j} $ is the polynomial $g_3^{(n+2)}$ evaluated on variables $x^{i_{\sigma(1)}}_{\sigma(1)}, \dots, x^{i_{\sigma(n+1)}}_{\sigma(n+1)}$ multiplied by $\beta$.
But $\beta$ is independent of the permutation $\sigma$, it depends only on the fixed choice of $i_1, \dots, i_{n+1}$ and $j$. In addition, $x^{i_{\sigma(1)}}_{\sigma(1)}, \dots, x^{i_{\sigma(n+1)}}_{\sigma(n+1)} \in \{x_1, \dots, x_n\}$, which implies that at least one of the variables appears twice.  This implies that for any choice of $i_1, \dots, i_{n+1}$ and $j$ 
\[
h_{i_1, \dots, i_{n+1},j}  \equiv_{I_5} 0
\]
and thus $g_3^{(n+2)}(m_1, \dots, m_{n+1}) \equiv_{I_5} 0$.
\end{proof}

\begin{prop}\label{standard}
    Let $n\geq 4$ be an even integer. Then $s_{n+2}(x_1, \dots, x_{n+2})$ is a polynomial identity for $F_n(\mathfrak{N}_4)$.
\end{prop}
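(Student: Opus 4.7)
The plan is to follow the same template used in Propositions \ref{standard4}, \ref{prop_polynomialIdentity_odd_n}, and \ref{prop_identity_g3_odd}: start from the well-known expression
\[
s_{n+2}(x_1,\dots,x_{n+2}) \;=\; \frac{1}{2^{(n+2)/2}}\sum_{\sigma\in S_{n+2}}(-1)^{\sigma}[x_{\sigma(1)},x_{\sigma(2)}]\cdots[x_{\sigma(n+1)},x_{\sigma(n+2)}],
\]
evaluate on arbitrary monomials $m_1,\dots,m_{n+2}\in K\langle x_1,\dots,x_n\rangle$ (write $m_i=x_i^1\cdots x_i^{q_i}$), and show the result vanishes modulo $I_5$. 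Expand each commutator using Lemma \ref{CommutatorOfMonomials}(i); then, repeatedly applying $c[a,b]=[a,b]c-[a,b,c]$ together with the containments $I_3I_3\subset I_5$, $I_3I_4\subset I_6$, $I_4I_4\subset I_6$, and the fact that length-$4$ commutators are central modulo $I_5$ (all recorded in Theorem \ref{thm_products_of_commutators} and in Proposition \ref{prop_polynomialIdentity_even_n} and its proof), move every non-commutator variable to the left of the product of pure double commutators. Terms picking up a triple commutator in the process are absorbed by $I_5$, exactly as in the proof of Proposition \ref{prop_polynomialIdentity_odd_n}.

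After this cleanup, for each fixed multi-index $(i_1,\dots,i_{n+2})$ with $1\le i_j\le q_j$, gather the $\sigma$-independent monomial factor
\[
\beta_{i_1,\dots,i_{n+2}} \;=\; x_1^1\cdots\widehat{x_1^{i_1}}\cdots x_1^{q_1}\ \cdots\ x_{n+2}^1\cdots\widehat{x_{n+2}^{i_{n+2}}}\cdots x_{n+2}^{q_{n+2}},
\]
and obtain
\[
s_{n+2}(m_1,\dots,m_{n+2}) \;\equiv_{I_5}\; \sum_{i_1,\dots,i_{n+2}} \beta_{i_1,\dots,i_{n+2}}\cdot s_{n+2}\bigl(x_1^{i_1},\dots,x_{n+2}^{i_{n+2}}\bigr).
\]
Here we use Proposition \ref{prop_polynomialIdentity_even_n} to commute the monomial factor past the product of double commutators at will modulo $I_5$, and we recognize the inner antisymmetric sum as the standard polynomial applied to single variables.

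The conclusion is immediate: each $x_j^{i_j}$ lies in $\{x_1,\dots,x_n\}$, so by the pigeonhole principle two of the $n+2$ arguments of $s_{n+2}$ coincide, and the standard polynomial vanishes on such an input. Hence every term on the right-hand side is zero, giving $s_{n+2}(m_1,\dots,m_{n+2})\equiv_{I_5}0$, and since this holds for arbitrary monomial inputs, $s_{n+2}$ is an identity for $F_n(\mathfrak{N}_4)$.

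The main obstacle is purely bookkeeping: ensuring that each reorganization step (moving a monomial past a double commutator, swapping two double commutators, or swapping a double and a triple commutator) produces only error terms lying in $I_5$. This is precisely the technical core of Propositions \ref{prop_polynomialIdentity_odd_n} and \ref{prop_identity_g3_odd}, so the argument can be presented by invoking those reductions rather than redoing them in detail. No new identity from Lemma \ref{consequencesN4} or Lemma \ref{consequencesN4(2)} is needed, since the alternating nature of $s_{n+2}$ does the combinatorial work for us.
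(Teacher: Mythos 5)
Your overall strategy --- evaluate on monomials, push the non-commutator variables to the left, recognize the surviving inner sum as $s_{n+2}$ applied to pure variables, and kill it by alternation plus pigeonhole --- is exactly the paper's. However, there is a genuine gap at the step where you claim the triple-commutator error terms are ``absorbed by $I_5$, exactly as in the proof of Proposition \ref{prop_polynomialIdentity_odd_n}'', and in your closing assertion that no identity from Lemma \ref{consequencesN4(2)} is needed. In the odd case that absorption works because the product has $\frac{n+3}{2}$ double commutators, so after one of them is replaced by a triple commutator the remaining double commutators still carry $n+1$ pure slots filled from only $n$ variables; the forced repetition puts that factor in $I_3$, and $I_3I_3\subset I_5$ finishes. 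Here $s_{n+2}$ has only $\frac{n+2}{2}$ double commutators: when one becomes a triple commutator $[x_a,x_b,u]$, the remaining $\frac{n}{2}$ double commutators occupy exactly $n$ slots, and it can happen that these are $x_1,\dots,x_n$ each exactly once, with both ``excess'' occurrences landing inside the triple commutator. The resulting term $[x_1,x_2]\cdots[x_{n-1},x_n][x_a,x_b,u]$ is not in $I_3I_3$, and the fact that it nevertheless lies in $I_5$ is precisely the content of Proposition \ref{prop_polynomialIdentity2_even_n}, whose proof relies on identities (vi) and (vii) of Lemma \ref{consequencesN4(2)}. The paper's proof of Proposition \ref{standard} invokes Proposition \ref{prop_polynomialIdentity2_even_n} at exactly this point, and also uses Proposition \ref{prop_polynomialIdentity_even_n} (the vanishing of $\frac{n+4}{2}$ double commutators) to justify reordering the extracted variables, which you do cite correctly. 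So your argument is repaired simply by replacing the appeal to the odd-case mechanism with a citation of Proposition \ref{prop_polynomialIdentity2_even_n}; as written, the absorption step fails for the terms just described.
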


\begin{proof}
    It is well known that  $s_{n+2}$ is a scalar multiple of the polynomial \[ \sum_{\sigma\in S_{n+2}}(-1)^{\sigma} p_{\sigma} = \sum_{\sigma\in S_{n+2}}(-1)^{\sigma} [x_{\sigma(1)},x_{\sigma(2)}]\cdots [x_{\sigma(n+1)},x_{\sigma(n+2)}].\]

    Since $s_{n+2}$ is alternating in $n+2$ variables, $s_{n+2}$ vanishes when evaluated in pure variables of $F_n(\mathfrak{N}_4)$. Let us show that it vanishes under evaluation by arbitrary monomials.

    We consider the evaluation of the variable $x_i$ by the monomial $m_i = x^{(i)}_1\cdots x^{(i)}_{l_i}$.

        Hence, Lemma \ref{CommutatorOfMonomials} implies that for $t, s$, the commutator $[m_t,m_s]$ will result in
        \[\sum_{i_t=1}^{l_t}\sum_{i_s=1}^{l_s}x^{(t)}_1\cdots x^{(t)}_{i_{t-1}}x^{(s)}_1\cdots x^{(s)}_{i_{s-1}}[x^{(t)}_{i_t}, x^{(s)}_{i_s}]x^{(s)}_{i_{s+1}}\cdots x^{(s)}_{l_s}x^{(t)}_{i_{t+1}}\cdots x^{(t)}_{l_t}.\]

        And $s_{n+2}$ will result in

        \begin{align*}
            & \sum_{i_1, \dots, i_{n+2}=1}^{l_1, \dots, l_{n+2}} \sum_{\sigma\in S_{n+2}}(-1)^{\sigma} x^{(\sigma(1))}_1\cdots x^{(\sigma(1))}_{i_{\sigma(1)-1}}x^{(\sigma(2))}_1\cdots x^{(\sigma(2))}_{i_{\sigma(2)-1}}[x^{(\sigma(1))}_{i_\sigma(1)}, x^{(\sigma(2))}_{i_\sigma(2)}]x^{(\sigma(2))}_{i_{\sigma(2)+1}}\cdots x^{(\sigma(2))}_{l_\sigma(2)}\\
            & x^{(\sigma(1))}_{i_{\sigma(1)+1}}\cdots x^{(\sigma(1))}_{l_\sigma(1)} \cdots x^{(\sigma(n+1))}_1\cdots x^{(\sigma(n+1))}_{i_{\sigma(n+1)-1}}x^{(\sigma(n+2))}_1\cdots x^{(\sigma(n+2))}_{i_{\sigma(n+2)-1}}[x^{(\sigma(n+1))}_{i_\sigma(n+1)}, x^{(\sigma(n+2))}_{i_\sigma(n+2)}]\\
            & x^{(\sigma(n+2))}_{i_{\sigma(n+2)+1}}\cdots x^{(\sigma(n+2))}_{l_\sigma(n+2)}x^{(\sigma(n+1))}_{i_{\sigma(n+1)+1}}\cdots x^{(\sigma(n+1))}_{l_\sigma(n+1)}.
        \end{align*}

        After such evaluation, Lemma \ref{CommutatorOfMonomials} implies that for each $i_1, \dots, i_{n+2}$ and for each $\sigma$, the corresponding polynomial is a linear combination of elements of type
        \[a_1[b_1,b_2]a_2[b_3,b_4]a_3\cdots a_{\frac{n+2}{2}}[b_{n+1},b_{n+2}]a_{\frac{n+4}{2}},\] where the $b_i$ are pure variables and the $a_i$ are monomials. From Proposition \ref{prop_polynomialIdentity2_even_n} we know that a product of $\frac{n+2}{2}$ commutators vanishes in $F_n(\mathfrak{N}_4)$ if at least one of the commutators is a triple one. As a consequence, in the above polynomial, we can move the variables outside commutators to the leftmost position. Also, since the product of any $\frac{n+4}{2}$ commutators vanishes in $F_n(\mathfrak{N}_4)$, we can reorder the variables outside the commutator in any order.

        Back to the evaluation of $s_{n+2}$ on monomials, we obtain 

         \[\sum_{i_1, \dots, i_{n+2}=1}^{l_1, \dots, l_{n+2}}q_{i_1,\dots,i_{n+2}} s_{n+2}(x^{(1)}_{i_1},\dots,x^{(n+2)}_{i_{n+2}}), \]
         for some polynomials $q_{i_1,\dots,i_{n+2}}$. 
 
        Now the result follows from the fact that $s_{n+2}$ vanishes when evaluated on $n$ pure variables, since it is alternating in $n+2$ variables.
\end{proof}

\begin{proof}(of Theorem \ref{thm_PI} for even $n \geq 4$): Let $m > n$. We proceed as in the odd case.  We need to check which of the generators of the $S_m$-module $\Gamma_m(\mathfrak{N}_4)$ given in Theorem \ref{thm_Stoyanova-Venkova} are polynomial identities for $F_n(\mathfrak{N}_4)$.

Recall that by Proposition \ref{g3n+2_even} if $2k>n$, $g_3^{(2k)}$ is a polynomial identity for $F_n(\mathfrak{N}_4)$.

Now, we consider the polynomial $g_1^{(2k-1)}$. If $2k-1 = n+1$ (i.e., $n = 2k-2$) 
then, $g_1^{(n+1)} \in K\left\langle x_1 \dots, x_n \right\rangle$. By Lemma \ref{nvariables}, $g_1^{(n+1)}$ is a polynomial identity for $F_n(\mathfrak{N}_4)$ if and only if $g_1^{(n+1)} \in I_5$. The last statement does not hold, since the linearization of $g_1^{(n+1)}$ generates a nontrivial $S_{n+1}$-submodule of $\Gamma_{n+1}(\mathfrak{N}_4)$. Therefore, $g_1^{(n+1)}$ is not a polynomial identity for $F_n(\mathfrak{N}_4)$. 

Next, let $2k-1 > n+ 1$ (i.e., $n < 2k-2$). Notice that by \cite[Theorem 3]{Stoyanova1984(N4)}, any $g_{i}^{(r)}$ is a consequence of $g_{3}^{(n+2)}$ if $r\geq n+3$. In particular, $g_1^{(2k-1)}$ is a consequence of $g_{3}^{(n+2)}$.

In the same way we show that when $n = 2k-2$, $g_2^{(2k-1)} = g_{2}^{(n+1)}$ and $g_3^{(2k-1)} = g_{3}^{(n+1)}$ are not polynomial identities for $F_n(\mathfrak{N}_4)$ and when $n < 2k-2$ then $g_2^{(2k-1)}$ and $g_3^{(2k-1)} $  are consequences of $g_{3}^{(n+2)}$.

Next, we consider the polynomials $g_1^{(2k)}$ and $g_2^{(2k)}$.

If $2k > n+2$ (i.e., $n < 2k-2$), then $g_1^{(2k)}$ and  $g_2^{(2k)}$ are consequences of $g_{3}^{(n+2)}$.  On the other hand, if $2k=n+2$ (i.e., $n=2k-2$), then 
both polynomials belong to $K\left\langle x_1, \dots, x_n \right\rangle$, hence by Lemma \ref{nvariables} they are not polynomial identities for $F_n(\mathfrak{N}_4)$. 

Finally, $s_{2k}$ is not a polynomial identity if $2k\leq n$ and is a polynomial identity if $2k>n$, by Proposition \ref{standard}. 

As a consequence, we showed that if $n\geq 6$ is even, any of the generators of the $S_m$-modules $\Gamma_m(\mathfrak{N}_4)$ are either non-identities for  $F_n(\mathfrak{N}_4)$ or consequences of the polynomials $g_3^{(n+2)}$, $s_{n+2}$ and $[x_1,x_2,x_3,x_4,x_5]$.

To finish the proof of the theorem, we need to consider the case $n=4$.
To that it is enough to prove that the polynomials $g_{0}^{(5)}$ and $g_{0}^{(6)}$ are not identities for $F_4(\mathfrak{N}_4)$.
This is immediate once both are polynomials in only 2 variables.
\end{proof}

\subsection{The cases $n=2$ and $n=3$}

The following is a consequence of Proposition \ref{standard}.

\begin{prop} \label{prop_identity_s6_n3}

Let $n=2$ or $n=3$. Then the standard polynomial 
\[
s_6(x_1, \dots, x_6) =\frac{1}{2^{3}}\sum_{\sigma \in S_{6}} (-1)^{\sigma} [x_{\sigma(1)}, x_{\sigma(2)}] [x_{\sigma(3)}, x_{\sigma(4)}] [x_{\sigma(5)}, x_{\sigma(6)}]
\]
is an identity for $F_n(\mathfrak{N}_4)$.
\end{prop}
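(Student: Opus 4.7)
The plan is to adapt the strategy of Proposition \ref{standard} to the small-variable setting, using Pchelintsev's bound (Theorem \ref{thm_products_of_commutators}(vi)) in place of the propositions used there. The central observation is that $s_6$ is alternating in six variables, so any evaluation on pure variables from $\{x_1,\dots,x_n\}$ with $n\leq 3$ forces a repetition and therefore vanishes. Hence the task reduces to showing that an evaluation $s_6(m_1,\dots,m_6)$ on arbitrary monomials $m_i=x_i^1\cdots x_i^{q_i}$ in $K\langle x_1,\dots,x_n\rangle$ is congruent, modulo $I_5$, to a linear combination of monomials times evaluations of $s_6$ on pure variables from the same three-letter alphabet.

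First, I would rewrite $s_6$ as a scalar multiple of $\sum_{\sigma\in S_6}(-1)^\sigma [x_{\sigma(1)},x_{\sigma(2)}][x_{\sigma(3)},x_{\sigma(4)}][x_{\sigma(5)},x_{\sigma(6)}]$ and expand each commutator $[m_t,m_s]$ using Lemma \ref{CommutatorOfMonomials}(i). This produces a linear combination of products of three double commutators of pure variables, with the remaining letters of the monomials interleaved throughout.

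Next, I would push all interleaved letters to the leftmost position modulo $I_5$. Each commutation step $c[a,b]=[a,b]c-[a,b,c]$ replaces a double commutator by a triple commutator (while keeping the other two doubles). Iterating Theorem \ref{thm_products_of_commutators}(vi) in $K\langle x_1,x_2,x_3\rangle$ gives $I_2^2\cdot I_3\subset I_2\cdot I_4\subset I_5$ and $I_2^4\subset I_2\cdot I_4\subset I_5$, so every such correction term belongs to $I_5\cap K\langle x_1,x_2,x_3\rangle$ and can be discarded. The same bounds allow free reordering of the outside letters, because any new commutator generated along the way again gives a product of four doubles or of two doubles with one triple, both of which lie in $I_5$. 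After this rearrangement, $s_6(m_1,\dots,m_6)$ becomes a sum over indices $i_1,\dots,i_6$ of terms of the shape
\[
\beta_{i_1,\dots,i_6}\sum_{\sigma\in S_6}(-1)^\sigma[x_{\sigma(1)}^{i_{\sigma(1)}},x_{\sigma(2)}^{i_{\sigma(2)}}][x_{\sigma(3)}^{i_{\sigma(3)}},x_{\sigma(4)}^{i_{\sigma(4)}}][x_{\sigma(5)}^{i_{\sigma(5)}},x_{\sigma(6)}^{i_{\sigma(6)}}],
\]
where $\beta_{i_1,\dots,i_6}$ is the product of the omitted letters in a fixed order (independent of $\sigma$). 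The inner sum is exactly $s_6$ evaluated on six pure variables drawn from the set $\{x_1,\dots,x_n\}$ with $n\leq 3$, so by alternation it vanishes, giving $s_6(m_1,\dots,m_6)\equiv 0\pmod{I_5}$.

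The main obstacle is the bookkeeping for the second step: unlike the case $n\geq 4$ of Proposition \ref{standard}, we cannot invoke Proposition \ref{prop_polynomialIdentity2_even_n}, and instead must carefully count commutator lengths after each transposition and verify that Pchelintsev's bound applies to every correction term. Since all computations take place in $K\langle x_1,x_2,x_3\rangle$ (or its subalgebra on two variables), this is exactly where the restriction to $n\in\{2,3\}$ is used.
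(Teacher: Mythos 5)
Your proposal is correct, but it takes a genuinely different route from the paper. The paper's own proof is a one-line deduction: by Proposition \ref{standard} with $n=4$, the polynomial $s_6=s_{4+2}$ is an identity for $F_4(\mathfrak{N}_4)$, and since $\mathrm{Id}(F_4(\mathfrak{N}_4))\subseteq\mathrm{Id}(F_3(\mathfrak{N}_4))\subseteq\mathrm{Id}(F_2(\mathfrak{N}_4))$, it is an identity for $F_3(\mathfrak{N}_4)$ and $F_2(\mathfrak{N}_4)$ as well. You instead run the direct computation in at most three variables, in the style of Proposition \ref{standard4}, replacing the rank-$\geq 4$ tools (Propositions \ref{prop_polynomialIdentity_even_n} and \ref{prop_polynomialIdentity2_even_n}) by Pchelintsev's three-variable bound, Theorem \ref{thm_products_of_commutators}(vi). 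Your ideal estimates check out: every correction term produced by pushing letters out of, and reordering them around, the three double commutators is a product (in some order) of either two doubles and one triple or four doubles, and combining $I_2I_3\subset I_4$ (Theorem \ref{thm_products_of_commutators}(iv)) with $I_2I_4\cap K\langle x_1,x_2,x_3\rangle\subset I_5$ and $I_2I_2\cap K\langle x_1,x_2,x_3\rangle\subset I_3$ places all of them in $I_5$, regardless of the position of the triple commutator; the surviving term then factors as a $\sigma$-independent monomial times $s_6$ on pure variables from a set of size at most $3$, which vanishes by alternation. What each approach buys: the paper's argument is essentially free once Proposition \ref{standard} is available, but that proposition rests on the full chain Lemma \ref{lemma_I5} $\Rightarrow$ Proposition \ref{prop_polynomialIdentity_odd_n} $\Rightarrow$ Propositions \ref{prop_polynomialIdentity_even_n} and \ref{prop_polynomialIdentity2_even_n}; your argument is self-contained, needs only Lemma \ref{CommutatorOfMonomials} and Theorem \ref{thm_products_of_commutators}(vi), and makes transparent exactly where the restriction $n\leq 3$ enters. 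The only thing I would ask you to spell out is the case analysis over the position of the triple commutator in the correction terms (triple first, middle, or last), since the inclusion you quote, $I_2^2I_3\subset I_2I_4$, literally covers only one ordering.
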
 

\begin{proof}
Since $s_6$ is an identity for $F_4(\mathfrak{N}_4)$, then it is also an identity for $F_3(\mathfrak{N}_4)$ and $F_2(\mathfrak{N}_4)$.
\end{proof}

\begin{prop}
    The standard polynomial of degree 4,
    \[s_4(x_1,x_2,x_3,x_4)=\sum_{\sigma\in S_4} (-1)^\sigma x_{\sigma(1)}x_{\sigma(2)}x_{\sigma(3)}x_{\sigma(4)}\]
    is not a polynomial identity for $F_2(\mathfrak{N}_4)$.
\end{prop}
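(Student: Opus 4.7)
My plan is to exhibit an explicit evaluation witnessing that $s_4$ does not vanish on $F_2(\mathfrak{N}_4)$. By Proposition \ref{prop_DeryabinaKrasilnikov}, the algebra $E\otimes E_2$ lies in $\mathfrak{N}_4$, so any two-generated subalgebra of $E\otimes E_2$ belongs to $\mathfrak{N}_4$ and, by the universal property of the relatively free algebra, is a quotient of $F_2(\mathfrak{N}_4)$. Hence it is enough to produce two elements $y_1,y_2\in E\otimes E_2$ and four polynomials $P_1,P_2,P_3,P_4\in K\langle x_1,x_2\rangle$ such that $s_4(P_1(y_1,y_2),\dots,P_4(y_1,y_2))\neq 0$ in $E\otimes E_2$.

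Following the template of Lemma \ref{lemma_not_identity1}, I would take $y_1 = e_1\otimes g_1+e_2\otimes g_2+e_3\otimes 1$ and $y_2 = e_4\otimes g_1+e_5\otimes g_2+e_6\otimes 1$, and evaluate $s_4$ at the quadruple $(y_1,y_2,y_1 y_2,y_2 y_1)$. To keep the computation tractable, I would first rewrite
\[
s_4(a_1,a_2,a_3,a_4) = \{[a_1,a_2],[a_3,a_4]\}-\{[a_1,a_3],[a_2,a_4]\}+\{[a_1,a_4],[a_2,a_3]\},
\]
with $\{u,v\}=uv+vu$, an identity obtained by grouping the $24$ permutations appearing in $s_4$ by their underlying pair-partition of $\{1,2,3,4\}$ and collecting coefficients. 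Using Lemma \ref{CommutatorOfMonomials} to expand the inner commutators $[y_1,y_1 y_2]$, $[y_2,y_2 y_1]$, $[y_1,y_2 y_1]$, $[y_2,y_1 y_2]$ and $[y_1 y_2,y_2 y_1]$ reduces the evaluation to a short linear combination of products of $y_1$, $y_2$ and $[y_1,y_2]$.

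The main obstacle is then to verify that the resulting element is actually non-zero in $E\otimes E_2$. The structural observations that make this feasible are: (a) every component of $[y_1,y_2]$ lies in the even part of $E$, hence is central in $E$ and commutes with each $e_i$; (b) $g_1^2=g_2^2=0$ and $g_1g_2+g_2g_1=0$ in $E_2$; (c) the non-trivial contributions to the non-central part of the final expression can be read off from the coefficient of the top Grassmann monomial $e_1 e_2 e_3 e_4 e_5 e_6\otimes g_1g_2$. Tracking this coefficient through the Grassmann relations should yield a non-zero scalar, which completes the proof.

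Should this particular quadruple happen to vanish by a coincidence of the subalgebra, I would fall back either on the generating algebra $E_2\otimes E_2\otimes E_2\in\mathfrak{N}_4$ (also available via Corollary \ref{Stoyanova-Venkova_corollary}) or on a slight variation of the monomials, such as $s_4(y_1,y_2,y_1^2 y_2,y_1 y_2^2)$; but the overall strategy, a direct evaluation in one of the generating algebras of $\mathfrak{N}_4$, is the same.
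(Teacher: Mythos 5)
Your overall strategy --- reduce to a two-variable substitution instance of $s_4$ and evaluate it in a concrete algebra generating $\mathfrak{N}_4$ --- is the same as the paper's, which uses $s_4(x_1,x_2,x_1x_2x_1,x_2^2)$ and evaluates in $E_2\otimes E_2\otimes E_2$. However, your proposal has a genuine gap: the decisive computation is only sketched (``should yield a non-zero scalar''), and the specific evaluation you propose is in fact identically zero. With $y_1=e_1\otimes g_1+e_2\otimes g_2+e_3\otimes 1$ and $y_2=e_4\otimes g_1+e_5\otimes g_2+e_6\otimes 1$, every product of six of the atoms $e_i\otimes u_i$ vanishes: if an atom repeats, the $E$-part dies because $e_i^2=0$; if all six atoms are distinct, the $E_2$-part is a product of the tags $g_1,g_2,1,g_1,g_2,1$, which contains $g_1$ twice and hence vanishes. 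Since $s_4(y_1,y_2,y_1y_2,y_2y_1)$ is homogeneous of degree $6$ in these atoms, it is zero, and in particular the coefficient of $e_1\cdots e_6\otimes g_1g_2$ you plan to track vanishes before any cancellation is examined. The same count shows that every product of at least five atoms vanishes, so in the subalgebra generated by these $y_1,y_2$ the only surviving contributions to $s_4(P_1,\dots,P_4)$ come from the linear parts of the $P_i$ (constant terms contribute nothing because $s_4$ vanishes when one argument is $1$), and $s_4$ alternating on the two-dimensional span of $y_1,y_2$ gives $0$. No variation of the four monomials rescues this choice of generators.

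The deeper problem is the choice of $E\otimes E_2$ as the target algebra. Corollary \ref{Stoyanova-Venkova_corollary} says that $E\otimes E_2$ alone does not generate $\mathfrak{N}_4$; the components it misses are exactly the two-variable ones $g_0^{(5)}=[x_2,x_1][x_2,x_1,x_1]$ and $g_0^{(6)}=[x_1,x_2]^3$, which are identities of $E\otimes E_2$ (one can check directly that $[a,b]^3=0$ for all $a,b\in E\otimes E_2$) and are detected only by $E_2\otimes E_2\otimes E_2$. Any witness for the present proposition is necessarily a two-variable consequence of $s_4$, so there is every reason to expect it to be invisible in $E\otimes E_2$; this is precisely why the paper evaluates its witness $[x_1[x_2,x_1]x_1,x_2]$ in $E_2\otimes E_2\otimes E_2$. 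Your fallback to $E_2\otimes E_2\otimes E_2$ is therefore not a contingency but the essential step, and it is the one step for which you supply no computation. To repair the proof, pick an explicit two-variable substitution (the paper's quadruple $(x_1,x_2,x_1x_2x_1,x_2^2)$ works) and carry the evaluation in $E_2\otimes E_2\otimes E_2$ through to a verified non-zero element.
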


\begin{proof}
    The proof follows once we show that the substitutions $x_3\mapsto x_1x_2x_1$ and $x_4\mapsto x_2^2$ give us a polynomial which is not in $I_5$. 

    The above evaluation yields 
    \[s_4(x_1,x_2,x_1x_2x_1,x_2^2) = x_2(-x_2x_1x_2x_1^2 -x_1^2 x_2x_1x_2 +x_1 x_2 x_1^2x_2 + x_2x_1^2 x_2x_1)x_2.\]

    We will show that $-x_2x_1x_2x_1^2 -x_1^2 x_2x_1x_2 +x_1 x_2 x_1^2x_2 + x_2x_1^2 x_2x_1\not \in I_5$ and this implies that the above is not in $I_5$ either.

    To that, notice that $-x_2x_1x_2x_1^2 -x_1^2 x_2x_1x_2 +x_1 x_2 x_1^2x_2 + x_2x_1^2 x_2x_1 = [x_1[x_2,x_1]x_1,x_2] $. Now we use Corollary \ref{Stoyanova-Venkova_corollary}. Using the evaluation $x_1 =e_1\otimes 1 \otimes 1 +1\otimes f_1 \otimes 1 + 1\otimes 1 \otimes g_1 $ and $x_2 = e_2\otimes 1 \otimes 1 +1\otimes f_2 \otimes 1 + 1\otimes 1 \otimes g_2 $ we obtain 
    \[[x_1[x_2,x_1]x_1,x_2] 
 = -16(e_1e_2\otimes f_1f_2\otimes g_1+e_1e_2\otimes f_1\otimes g_1g_2+e_1\otimes f_1f_2\otimes g_1g_2)\]
 which is nonzero in $E_2\otimes E_2\otimes E_2$. Hence $s_4$ is not a polynomial identity for $F_2(\mathfrak{N}_4)$.
\end{proof}

The analogue of Theorem \ref{thm_PI} for the case $n=2$ is stated below without a proof. The proof consists of the same arguments as in the cases $n\geq 4$, but the proofs that the polynomials $g_2^{(5)}$ and $g_3^{(5)}$ are identities are much longer.

\begin{theorem}
    The polynomial identities of $F_2(\mathfrak{N}_4)$ are given by
    \[\mathrm{Id}(F_2(\mathfrak{N}_4)) = \langle g_2^{(5)}, g_3^{(5)}, s_6, [x_1,x_2,x_3,x_4,x_5]\rangle^T\]
    where
    \[g_2^{(5)} = \sum_{\sigma\in S_3}(-1)^{\sigma} [x_{\sigma(1)},x_{\sigma(2)}][x_2,x_1,x_{\sigma(3)}],\]
    \[g_3^{(5)} = \sum_{\sigma\in S_4}(-1)^{\sigma} [x_{\sigma(1)},x_{\sigma(2)}][x_{\sigma(3)}, x_{\sigma(4)},x_1].\]
\end{theorem}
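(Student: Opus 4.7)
The plan is to adapt the strategy used for $n\geq 4$, relying on the $S_m$-module decomposition of $\Gamma_m(\mathfrak{N}_4)$ given by Theorem \ref{thm_Stoyanova-Venkova}. For each $m$ one enumerates the generators of the irreducible submodules and decides whether each is an identity for $F_2(\mathfrak{N}_4)$; the goal is to show that every identity lies in the $T$-ideal generated by $[x_1,\ldots,x_5]$, $s_6$, $g_2^{(5)}$ and $g_3^{(5)}$.

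By Lemma \ref{nvariables}, any proper polynomial in at most two variables is an identity for $F_2(\mathfrak{N}_4)$ only if it already lies in $I_5$. This immediately rules out $g_0^{(5)}=[x_2,x_1][x_2,x_1,x_1]$, $g_0^{(6)}=[x_1,x_2]^3$, $f_2^{(4)}=[x_1,x_2]^2$, $f_3^{(3)}$ and $s_2$ as identities. The polynomial $s_4$ is handled by the proposition immediately preceding the theorem, and $s_6$ is an identity by Proposition \ref{prop_identity_s6_n3}, from which $s_{2k}$ for $2k\geq 8$ follows by the standard recursion. Thus the standard summands and the purely two-variable summands of each $\Gamma_m(\mathfrak{N}_4)$ are accounted for.

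The central step is to verify that $g_2^{(5)}$ and $g_3^{(5)}$ are identities for $F_2(\mathfrak{N}_4)$. The approach mirrors Proposition \ref{prop_identity_g3_odd}: substitute arbitrary monomials $m_1,\ldots,m_k\in K\langle x_1,x_2\rangle$ for the variables, expand each commutator of monomials using Lemma \ref{CommutatorOfMonomials}, and then move the auxiliary factors outside the products of pure commutators using Theorem \ref{thm_products_of_commutators} together with Lemmas \ref{consequencesN4} and \ref{consequencesN4(2)}. Modulo $I_5$ this reduces the evaluation to a scalar multiple of an alternating sum of products of pure commutators in the two variables $x_1,x_2$, in which every surviving summand is paired, via a transposition, with another of opposite sign due to the forced repetition of variables, and hence vanishes.

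The final step is to check that for every $r\geq 6$ the remaining generators $g_i^{(r)}$ are consequences of $g_2^{(5)}$, $g_3^{(5)}$ and $[x_1,\ldots,x_5]$, which can be read off from \cite[Theorem 3]{Stoyanova1984(N4)}. The principal obstacle is the verification for $g_2^{(5)}$ and $g_3^{(5)}$: with only two variables available, many of the reductions modulo $I_5$ used in the proof for $n\geq 4$ (in particular the repeated appeal to $I_3I_3\subset I_5$ to kill auxiliary summands) do not apply uniformly, so a significantly finer case analysis of the substituted monomials is required, which explains why the authors note the argument is much longer.
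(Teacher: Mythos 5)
The paper does not actually prove this theorem: it is explicitly ``stated below without a proof,'' with the remark that the argument would follow the $n\geq 4$ template but that the verifications for $g_2^{(5)}$ and $g_3^{(5)}$ are ``much longer.'' Your proposal reproduces that template, but it stops exactly where the real work begins. The central claim --- that $g_2^{(5)}$ and $g_3^{(5)}$ are identities for $F_2(\mathfrak{N}_4)$ --- is asserted to follow by ``mirroring'' Proposition \ref{prop_identity_g3_odd}, and then in your final paragraph you concede that the reductions used there (in particular the repeated use of $I_3I_3\subset I_5$ and the analogues of Propositions \ref{prop_polynomialIdentity_odd_n} and \ref{prop_polynomialIdentity2_even_n}, which are false for $n=2$: neither $[x_1,x_2][x_3,x_4][x_5,x_6]$ nor $[x_1,x_2][x_3,x_4,x_5]$ is an identity for $F_2(\mathfrak{N}_4)$) do not apply, and that ``a significantly finer case analysis is required.'' That case analysis is the proof; without it you have a plan, not an argument. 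In particular, the step where you claim every surviving summand is ``paired, via a transposition, with another of opposite sign'' relies on being able to push all auxiliary monomial factors out of the alternating sum modulo $I_5$, and it is precisely this normalization that the two-variable setting obstructs.

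There is a second, smaller gap: your enumeration of the module generators is incomplete. Lemma \ref{nvariables} disposes only of generators in at most two variables ($g_0^{(5)}$, $g_0^{(6)}$, $g_1^{(4)}$, $g_2^{(4)}$, $g_3^{(3)}$, $s_2$). But $\Gamma_4(\mathfrak{N}_4)$ and $\Gamma_5(\mathfrak{N}_4)$ also contain generators in three variables, e.g.\ $g_3^{(4)}=\sum_{\sigma\in S_3}(-1)^{\sigma}[x_{\sigma(1)},x_{\sigma(2)},[x_{\sigma(3)},x_1]]$ and $g_1^{(5)}=\sum_{\sigma\in S_3}(-1)^{\sigma}[x_{\sigma(1)},x_{\sigma(2)}][x_{\sigma(3)},x_1,x_1]$, to which Lemma \ref{nvariables} says nothing for $n=2$. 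Since these do not appear in the stated generating set and (by degree and irreducibility considerations) cannot be consequences of the degree-$5$ and degree-$6$ generators, you must show by explicit evaluation (e.g.\ in $E\otimes E_2$ or $E_2\otimes E_2\otimes E_2$, via Corollary \ref{Stoyanova-Venkova_corollary}) that they are \emph{not} identities for $F_2(\mathfrak{N}_4)$. Your proposal omits these cases entirely.
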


For the case $n=3$ we have some partial results.

\begin{lemma}
    The polynomials $g_1^{(6)}$, $g_2^{(6)}$ and $g_3^{(6)}$ are polynomial identities for $F_3(\mathfrak{N}_4)$.
\end{lemma}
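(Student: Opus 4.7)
The plan is to follow the same scheme as in Propositions \ref{prop_polynomialIdentity_odd_n}, \ref{prop_identity_g3_odd}, \ref{g3n+2_even}, and \ref{standard}: for each of $g_1^{(6)}$, $g_2^{(6)}$, $g_3^{(6)}$, I would first verify vanishing on all evaluations by pure variables of $F_3(\mathfrak{N}_4)$, and then promote this to arbitrary monomial evaluations by expanding via Lemma \ref{CommutatorOfMonomials} and simplifying modulo $I_5$ using Theorem \ref{thm_products_of_commutators} together with Lemmas \ref{consequencesN4} and \ref{consequencesN4(2)}.

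For the pure-variable step, the case of $g_2^{(6)} = [x_1,x_2]\,s_4(x_1,x_2,x_3,x_4)$ is immediate, since $s_4$ is $4$-alternating and therefore already vanishes on any substitution of its four arguments drawn from three pure variables. For $g_3^{(6)}$, the $S_5$-sum produces alternation in the four indices $x_2, x_3, x_4, x_5$, so the pigeonhole principle forces two of those four slots to coincide under any substitution in $F_3(\mathfrak{N}_4)$, yielding the vanishing. The polynomial $g_1^{(6)}$ is more delicate: the $S_4$-sum provides alternation only in the three indices $x_2, x_3, x_4$, and pigeonhole does not kill the residual case in which $x_2, x_3, x_4$ take three distinct pure values and $x_1$ coincides with one of them. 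I would handle this case by first using alternation in $x_2, x_3, x_4$ to reduce to $g_1^{(6)}(y, y, c, d)$ with distinct $y, c, d$, and then pairing each $\sigma \in S_4$ with $\rho \circ \sigma$, where $\rho = (1\ 2) \in S_4$ swaps the two codomain positions both carrying the value $y$; since $z_{\rho\sigma(k)} = z_{\sigma(k)}$ for every $k$ while $(-1)^{\rho\sigma} = -(-1)^\sigma$, the paired summands cancel and the total vanishes.

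For the monomial step, I would substitute $m_i = x_i^{(1)} \cdots x_i^{(q_i)}$ into each $g_j^{(6)}$, expand the resulting nested commutators via Lemma \ref{CommutatorOfMonomials} (and an analogue of equation (\ref{identity_4_assoc}) for the $4$-fold commutator appearing in $g_1^{(6)}$), and then move all variables outside commutators to the leftmost position. The key inclusions $I_3 I_3 \subseteq I_5$, $I_2 I_4 \subseteq I_5$, $[L_2, L_3] \subseteq L_5$, and the centrality of $L_4$ modulo $I_5$, all furnished by Theorem \ref{thm_products_of_commutators}, together with Theorem \ref{thm_products_of_commutators}(vi) for the extra cancellation available in the three-variable free algebra, should control all of the error terms. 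After these reductions, the evaluation collapses to a linear combination of expressions of the form $\beta \cdot g_j^{(6)}(y_1, \ldots, y_r)$ with $y_k \in \{x_1, x_2, x_3\}$, each of which vanishes by the first step.

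The main obstacle will be the combinatorial bookkeeping for $g_1^{(6)}$: expanding the $4$-fold commutator $[x_{\sigma(3)},x_{\sigma(4)},x_1,x_1]$ on monomial inputs via Leibniz produces many auxiliary summands, and I expect the strengthened consequences (vi) and (vii) of Lemma \ref{consequencesN4(2)}, rather than only the basic product inclusions, to be needed to absorb all of them into $I_5$. A second, more subtle issue concerns $g_2^{(6)}$: the factor $s_4$ by itself is not an identity for $F_3(\mathfrak{N}_4)$ (Proposition \ref{standard} requires $n \geq 4$), so the leftmost $[x_1, x_2]$ must be used in conjunction with the alternating structure of $s_4$ and with Theorem \ref{thm_products_of_commutators}(vi) to gain the extra commutator length that forces the product into $I_5$.
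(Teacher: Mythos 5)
Your plan is workable in principle, but it takes a route that is genuinely different from the paper's and enormously heavier than necessary, and as written it leaves its two hardest steps unexecuted. The paper's entire proof is the observation that every summand of $g_1^{(6)}$ and of $g_3^{(6)}$ is a product of a double commutator with a commutator of length four (note that $[a,b,x_1,x_1]\in L_4$ and $[a,b,[c,d]]=[[a,b],[c,d]]\in L_4$ by Theorem \ref{thm_products_of_commutators}(i)), so any evaluation at elements of $K\langle x_1,x_2,x_3\rangle$ lies in $I_2I_4\cap K\langle x_1,x_2,x_3\rangle\subseteq I_5$ by Theorem \ref{thm_products_of_commutators}(vi); no alternation, pure-variable analysis, or Leibniz expansion is needed at all. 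For $g_2^{(6)}=[x_1,x_2]\,s_4$, the paper invokes Proposition \ref{standard4}: $s_4$ is an identity for $F_3(\mathfrak{N}_3)$, so its evaluations in three variables already lie in $I_4$, and the same application of part (vi) finishes. This is precisely the missing ingredient behind the ``more subtle issue'' you flag for $g_2^{(6)}$ and do not resolve.

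Two concrete cautions about the plan as stated. First, the inclusion $I_2I_4\subseteq I_5$ that you list among the consequences of Theorem \ref{thm_products_of_commutators} is false without the three-variable restriction: if it held, $g_1^{(6)}$ and $g_3^{(6)}$ would be identities of all of $\mathfrak{N}_4$, contradicting Theorem \ref{thm_Stoyanova-Venkova}. In general only $I_2I_4\subseteq I_4$ holds; the containment in $I_5$ is exactly Pchelintsev's three-variable statement (vi) and must be applied to the evaluated products, not to the polynomials themselves. Second, your monomial step for $g_1^{(6)}$ and $g_3^{(6)}$ produces error terms that are products of three or four double commutators, or of doubles with a triple, in three variables; these are not in $I_5$ by any single inclusion you cite (the paper even remarks that $[x_1,x_2][x_1,x_3][x_2,x_3]\notin I_5$), and each must be regrouped and pushed into $I_5$ by repeated applications of part (vi). That bookkeeping can be done, and your pure-variable pairing argument for $g_1^{(6)}$ is correct, but all of it evaporates once one notices the summand-by-summand membership in $I_2L_4$.
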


\begin{proof}
    Notice that by Proposition \ref{standard4} $s_4$ is an identity for $F_3(\mathfrak{N}_3)$. So the three polynomials above belong to $I_2I_4$ and by Theorem \ref{thm_products_of_commutators} (vi), they lie in $I_5$. 
\end{proof}

Notice that $g_2^{(5)}$ is not an identity for $F_3(\mathfrak{N}_4)$ by Lemma \ref{nvariables}. Since $g_2^{(5)}$ is a consequence of $s_4$, it follows that $s_4$ is not an identity for $F_3(\mathfrak{N}_4)$ either. 
By the above, we have only two possibilities for $\mathrm{Id}(F_3(\mathfrak{N}_4))$. Either it is generated as a T-ideal by $g_3^{(5)}$, $s_6$, and $[x_1,x_2,x_3,x_4,x_5]$ or by $g_1^{(6)}$, $g_{2}^{(6)}$, $g_3^{(6)}$, $s_6$, and $[x_1,x_2,x_3,x_4,x_5]$.

It will be the first case if $g_3^{(5)}$ is an identity for $F_3(\mathfrak{N}_4)$ and the second case if $g_3^{(5)}$ is not an identity for $F_3(\mathfrak{N}_4)$. Therefore, to complete the case $n=3$ one needs to show whether $g_3^{(5)}$ is or is not an identity for $F_3(\mathfrak{N}_4)$.

\section{The variety $\mathfrak{N}_p$ for arbitrary $p$}

In this section, we consider the variety of Lie nilpotent associative algebras defined by the identity $[x_1, \dots, x_{p+1}] = 0$, where $p$ is an arbitrary positive integer. Since $\mathfrak{N}_p$ is a non-matrix variety, and $F_n(\mathfrak{N}_p)$ is finitely generated, it follows that $F_n(\mathfrak{N}_p)$ satisfies an identity of the type $[x_1, x_2]\cdots [x_{2k-1}, x_{2k}]$ for some integer $k$ \cite{Cekanu1980, MischenkoPetrogradskyRegev2011}. In this section, for all $p$ and $n$ we find a lower bound for $k$. 
In other words, we find $k$ such that $[x_1, x_2]\cdots [x_{2k-1}, x_{2k}]$ is not an identity for $F_n(\mathfrak{N}_p)$.

\begin{prop}
Consider the relatively free algebra $F_n(\mathfrak{N_p})$ for arbitrary $p$ and $n$. Then the following hold:
\begin{itemize}
\item[(i)] If one of $n$ and $p$ is odd and the other one is even then
\[
g = [x_1, x_2]\cdots [x_{n+p-4}, x_{n+p-3}]
\] 
is not a polynomial identity for $F_n(\mathfrak{N}_p)$.

\item[(ii)] If both $n$ and $p$ are odd then
\[
g = [x_1, x_2]\cdots [x_{n+p-5}, x_{n+p-4}]
\]
is not an identity for $F_n(\mathfrak{N}_p)$.
\item[(iii)] If both $n$ and $p$ are even and then
\[
g = [x_1, x_2]\cdots [x_{n+p-3}, x_{n+p-2}]
\]
is not an identity for $F_n(\mathfrak{N}_p)$.
\end{itemize}
\end{prop}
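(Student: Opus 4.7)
The plan is, in each of the three parity cases, to produce a consequence $g'$ of $g$ in at most $n$ variables by identifying the ``extra'' variables with $x_1$ (and, when necessary, with $x_2$), and then invoke Lemma \ref{nvariables} to reduce the problem to showing $g'\notin I_{p+1}$. This non-containment would be established by exhibiting a nonzero evaluation of $g'$ in an explicit algebra $A\in\mathfrak{N}_p$, generalizing the arguments of Lemma \ref{lemma_not_identity1}.

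For even $p$ I would take $A = E\otimes E_{p-2}$, which lies in $\mathfrak{N}_p$ by Proposition \ref{prop_DeryabinaKrasilnikov}; for odd $p$ I would take $A = E\otimes E_{p-3}\in\mathfrak{N}_{p-1}\subseteq\mathfrak{N}_p$. Following the template of Lemma \ref{lemma_not_identity1}, I set
\[
x_1 = \sum_{i=1}^{m}e_i\otimes g_i + e_{m+1}\otimes 1, \qquad x_j = e_{m+j}\otimes 1 \text{ for } j\geq 2,
\]
with $m = p-2$ or $p-3$ as appropriate, where the $e_i$ generate $E$ and $g_1,\ldots,g_m$ generate the second Grassmann factor. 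Each heavy commutator equals $[x_1,x_j] = \sum_{i=1}^{m}2e_ie_{m+j}\otimes g_i + 2e_{m+1}e_{m+j}\otimes 1$, while each light commutator equals $[x_i,x_j] = 2e_{m+i}e_{m+j}\otimes 1$ for $i,j\geq 2$.

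The extras to identify number $p-3$, $p-4$, and $p-2$ in cases (i), (ii), and (iii) respectively. When $n$ is large enough, I would identify each extra with $x_1$, so that $g'$ takes the form $[x_1,x_{j_1}]\cdots[x_1,x_{j_s}]$ times a product of light pairs on pairwise distinct fresh variables; a direct Grassmann expansion then shows that this evaluates to a nonzero scalar multiple of $e_1\cdots e_m\cdot(\text{fresh }e_j\text{'s})\otimes g_1\cdots g_m$, generalizing the sign calculation of Lemma \ref{lemma_not_identity1}. For case (iii), and for small $n$ in the other cases, the naive identification would force an illegal $[x_1,x_1]=0$; the remedy is to spend some identifications on $x_2$ instead, producing a block $[x_1,x_2]^t$ inside $g'$. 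In $A$ the square $[x_1,x_2]^2 = 4(e_1e_2)^2\otimes 1 + 8e_1e_2\otimes g_1g_2 + 4\otimes(g_1g_2)^2$ collapses to $8e_1e_2\otimes g_1g_2$ by $(e_1e_2)^2 = (g_1g_2)^2 = 0$, and this cross term can be chained with additional $[x_1,x_?]$-factors and light pairs to give a nonzero product absorbing the remaining Grassmann generators.

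The main obstacle is case (iii) with small $n$ relative to $p$: here $t$ can be as large as $(p-2)/2+1$, and the naive binomial expansion in $E\otimes E_{p-2}$ vanishes by a symmetry on permutations of the $g_i$'s. One must then replace $A$ by a suitable tensor product such as $E_2^{\otimes t}$ (which by Corollary \ref{Stoyanova-Venkova_corollary} lies in $\mathfrak{N}_4\subseteq\mathfrak{N}_p$ for $p\geq 4$), in which $[x_1,x_2]^t$ evaluates to a nonzero multinomial across the commuting tensor factors. Organizing the identifications and algebra choices uniformly across all combinations of parities of $n$ and $p$, and verifying that the Grassmann products in each case reach a nonzero top or near-top element without premature cancellation, is the combinatorial heart of the proof; all other steps are routine generalizations of Lemma \ref{lemma_not_identity1} and its analogues.
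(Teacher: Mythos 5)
Your computation for the case where both $n$ and $p$ are even (and $n\geq p$) is exactly the paper's: the same consequence $[x_1,x_2][x_1,x_3]\cdots[x_1,x_p][x_{p+1},x_{p+2}]\cdots[x_{n-1},x_n]$ in $n$ variables, the same witness algebra $E\otimes E_{p-2}$, and the same evaluation of $x_1$. The gap is everything after that. You miss the observation that the remaining cases require no new computation at all: the polynomial of part (i) for odd $n$ and even $p$ has degree $n+p-3=(n-1)+p-2$, so it is literally the part (iii) polynomial for the even rank $n-1$, and a non-identity of $F_{n-1}(\mathfrak{N}_p)$ is automatically a non-identity of $F_n(\mathfrak{N}_p)$ since $\mathrm{Id}(F_n(\mathfrak{N}_p))\subseteq\mathrm{Id}(F_{n-1}(\mathfrak{N}_p))$. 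Likewise the part (i) polynomial for even $n$ and odd $p$ is the part (iii) polynomial for $\mathfrak{N}_{p-1}$, and $I_{p+1}\subseteq I_{p}$ means non-identities of $F_n(\mathfrak{N}_{p-1})$ are non-identities of $F_n(\mathfrak{N}_{p})$; part (ii) combines both shifts. This is precisely how the paper derives (i) and (ii), and it collapses your three separate constructions into a single computation.

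Because you instead attack each parity case directly, you are forced into genuinely harder territory, and those portions of the proposal are not proofs. The identifications for odd $p$ (in $E\otimes E_{p-3}$) and the non-vanishing of the resulting Grassmann products are only asserted, and you yourself defer the ``combinatorial heart'' --- checking that the sum over assignments of the $g_i$'s to the heavy commutators does not cancel --- to future verification. Your fallback for the blocks $[x_1,x_2]^t$ is also unsound as stated: it relies on $E_2^{\otimes t}\in\mathfrak{N}_4$ for arbitrary $t$, which Corollary \ref{Stoyanova-Venkova_corollary} does not provide (it concerns only $E\otimes E_2$ and $E_2^{\otimes 3}$), and your own observation that the primary expansion in $E\otimes E_{p-2}$ ``vanishes by a symmetry'' signals that the strategy fails there rather than that it merely needs a patch. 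To be fair, that problematic regime is $n$ small relative to $p$, where the paper's identification $[x_1,x_2]\cdots[x_1,x_p]$ also does not literally make sense; but in the regime the statement is aimed at ($n\geq p$, cf.\ the conjectures following the proposition), the monotonicity reduction disposes of all three cases and none of your extra machinery is needed.
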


\begin{proof}
We first consider the case where both $n$ and $p$ are even. We will show that
\[
g= [x_1, x_2]\cdots [x_{n+p-3}, x_{n+p-2}]
\]
is not an identity for $F_n(\mathfrak{N}_p)$. The polynomial
\[
g' = [x_1, x_2][x_1, x_3]\cdots[x_1, x_p][x_{p+1}, x_{p+2}]\cdots[x_{n-1}, x_n]
\]
is a consequence of $g$ and is a polynomial in $n$ variables. Hence, by Proposition \ref{prop_DeryabinaKrasilnikov}, it is enough to show that $g'$ is not an identity for $E\otimes E_{p-2}$. Here we recall that $E$ denotes the Grassmann algebra over a countable dimensional vector space with basis $\{e_1, e_2, \dots, e_n, \dots \}$ and $E_{p-2}$ denotes the Grassmann algebra over a $(p-2)$-dimensional vector space with basis $\{g_1, g_2, \dots, g_{p-2}\}$.

We consider the evaluation
\begin{align*}
&x_1 \mapsto e_1\otimes g_1 + e_2\otimes g_2 + \dots + e_{p-2}\otimes g_{p-2} + e_{p-1} \otimes 1 \\
&x_i \mapsto e_{p-2 + i} \otimes 1 \text{ for } i\geq 2.
\end{align*}

By direct computation we show that 
\[
g'(e_1\otimes g_1 + e_2\otimes g_2 + \dots + e_{p-2}\otimes g_{p-2} + e_{p-1} \otimes 1, e_{p}\otimes 1, \dots, e_{p+n-2}\otimes 1) \neq 0.
\]
Hence, $g'$ is not a polynomial identity for $E\otimes E_{p-2}$ and therefore $g$ is not an identity for $F_n(\mathfrak{N}_p)$ for even $p$ and $n$. This proves Part (iii) of the proposition. In addition, $g$ not being an identity for $F_n(\mathfrak{N}_p)$ implies that $g$ is not an identity for $F_{n+1}(\mathfrak{N}_p)$ either. This proves Part (i) of the proposition for odd $n$ and even $p$.
Furthermore, $g$ not being an identity for $F_n(\mathfrak{N}_p)$ also implies that $g$ is not an identity for $F_n(\mathfrak{N}_{p+1})$. This proves Part (i) of the proposition for even $n$ and odd $p$. 
Finally, $g$ not being an identity for $F_n(\mathfrak{N}_{p+1})$ implies that $g$ is not an identity for $F_{n+1}(\mathfrak{N}_{p+1})$ either. This proves Part (ii) of the proposition.
\end{proof}
An upper bound for $k$ is given in the following

\begin{prop}
    If $k\geq np-n+1$ is even, then $F_n(\mathfrak{N}_p)$ satisfies the identity $f=[x_1,x_2]\cdots [x_{k-1},x_k]$.
\end{prop}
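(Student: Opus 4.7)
The strategy is to expand $[m_1,m_2]\cdots[m_{k-1},m_k]$ on arbitrary monomials, apply pigeonhole, and invoke a single ``super key'' lemma about iterated subproducts containing a fixed variable---crucially, the argument requires no reordering of commutators.

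Since $F_n(\mathfrak{N}_p)$ is generated by $n$ elements, it suffices to show that $[m_1,m_2]\cdots[m_{k-1},m_k]\in I_{p+1}$ for every choice of monomials $m_1,\dots,m_k\in K\langle x_1,\dots,x_n\rangle$. Using the derivation rule $[uv,w]=u[v,w]+[u,w]v$ on each factor $[m_{2j-1},m_{2j}]$, the product expands into a sum of terms of the form
\[
u_0\,c_1\,w_1\,c_2\,w_2\cdots w_{k/2-1}\,c_{k/2}\,u_1,
\]
where each $c_j=[y_j,z_j]$ is a pure length-two commutator with $y_j,z_j\in\{x_1,\dots,x_n\}$ and $u_0,u_1,w_j$ are monomials in $x_1,\dots,x_n$. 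Since the $k$ variable slots of $c_1,\dots,c_{k/2}$ are distributed among $n$ variables and $k\ge n(p-1)+1$, pigeonhole yields a variable, say $x_1$, occurring in at least $p$ distinct commutators $c_{i_1},\dots,c_{i_p}$ with $i_1<\cdots<i_p$ (any commutator of the form $[x_1,x_1]=0$ kills the whole term).

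The heart of the proof is the following Super Key Lemma: \emph{if $d_1,\dots,d_m$ are length-two commutators each containing $x_1$ and $W_1,\dots,W_{m-1}\in K\langle X\rangle$ are arbitrary polynomials, then}
\[
d_1\,W_1\,d_2\,W_2\cdots W_{m-1}\,d_m \ \in\ I_{m+1}.
\]
I would prove this by induction on $m$. The base case $m=2$ asserts $d_1\,W\,d_2\in I_3$; it follows from the identity $[x_1,y_1][x_1,y_2]\in I_3$ (this polynomial vanishes in the Grassmann algebra $E$, and $\mathrm{Id}(E)=I_3$) by substituting $y_2\mapsto Wy_2$, expanding $[x_1,Wy_2]=[x_1,W]y_2+W[x_1,y_2]$, and subtracting the summand $[x_1,y_1][x_1,W]y_2\in I_3$. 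For the inductive step $m\ge 3$, split the expression as
\[
\bigl(d_1\,W_1\cdots W_{m-3}\,d_{m-2}\bigr)\cdot W_{m-2}\cdot \bigl(d_{m-1}\,W_{m-1}\,d_m\bigr);
\]
the first bracket lies in $I_{m-1}$ by induction and the last bracket lies in $I_3$ by the base case, so the product lies in $I_{m-1}\cdot K\langle X\rangle\cdot I_3\subseteq I_{m-1}\cdot I_3\subseteq I_{m+1}$ by Theorem \ref{thm_Bapat_Jordan} (since $3$ is odd).

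To conclude, apply the Super Key Lemma with $m=p$ to the subword
\[
c_{i_1}\,W_1\,c_{i_2}\,W_2\cdots W_{p-1}\,c_{i_p},
\]
where each $W_\alpha$ bundles the monomials and non-$x_1$ commutators sitting between the $\alpha$-th and $(\alpha{+}1)$-th $x_1$-commutator in the expanded term. This subword lies in $I_{p+1}$. Since the whole expanded term is $U\cdot(\text{this subword})\cdot V$ for some $U,V\in K\langle X\rangle$ and $I_{p+1}$ is a two-sided ideal, every term of the expansion lies in $I_{p+1}$; summing yields $[m_1,m_2]\cdots[m_{k-1},m_k]\in I_{p+1}$. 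The only delicate step is the base case $m=2$ of the Super Key Lemma, and the main advantage of this route is that the argument never reorders commutators, so the $I_4$-level errors that would otherwise prevent reaching $I_{p+1}$ when $p\ge 4$ never appear.
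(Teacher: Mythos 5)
Your proof is correct and follows essentially the same route as the paper: expand the evaluation on monomials into sums of products of pure double commutators separated by monomials, pigeonhole one variable into $p$ distinct commutators, and absorb consecutive pairs of commutators sharing that variable into $I_3$ so that the whole term lands in $I_{p+1}$. Your ``Super Key Lemma'' is an explicit inductive packaging of the paper's one-line containments $f\in I_3^{p/2}\subseteq I_{p+1}$ (resp.\ $I_3^{(p-1)/2}I_2\subseteq I_{p+1}$ for odd $p$), and it usefully spells out the sub-step $[x_1,a]\,W\,[x_1,b]\in I_3$ that the paper leaves implicit.
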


\begin{proof}
    Let $m_1, \dots, m_k$ be monomials in the variables $x_1, \dots, x_n$. Then $f(m_1, \dots, m_k)$ will be a linear combination of elements of the type
    \[a_1[b_1,b_2]a_2[b_3,b_4]a_3\cdots [b_{k-1},b_k]a_r\] where $a_i$ are monomials in $x_1, \dots x_n$ and $b_i\in \{x_1, \dots, x_n\}$ are pure variables.
    By the pigeonhole principle, if we have $k\geq np-n+1 = n(p-1)+1$ then for each summand above, at least one of the $n$ variables $x_1, \dots, x_n$ will appear $p$ times in $b_1, \dots, b_k$. If $p$ is even, then $f\in I_3^{\frac{p}{2}}\subseteq I_{p+1}$. If $p$ is odd, then $f\in I_3^{\frac{p-1}{2}}I_2\subseteq I_{p}I_2\subseteq I_{p+1}$.
\end{proof}

\begin{question}
    What is the minimal $k$ such that $F_n(\mathfrak{N}_p)$ satisfies the identity $[x_1,x_2]\cdots [x_{k-1},x_k]$?
\end{question}

We conjecture that the minimal $k$ is the following:
\begin{itemize}
    \item $k=n+p-1$ if one of $n$ and $p$ is even and the other is odd and $n \geq p+1$.
    \item $k = n+p-2$ if both $n$ and $p$ are odd and $n\geq p+2$.
    \item $k=n+p$ if both $n$ and $p$ are even and $n\geq p$.
\end{itemize}

By a theorem of Kemer \cite{K1991}, for every finitely generated associative PI-algebra $A$ over an infinite field $K$ there exists $d \geq 2$ such that $A$ satisfies the standard identity $s_d(x_1, \dots, x_d)$ of degree $d$. 
So a natural question here is the following:

\begin{question}
    Find the minimal $k$ such that $F_n(\mathfrak{N}_p)$ satisfies $s_k(x_1,\dots,x_k) = 0$.
\end{question}

We conjecture that 
\begin{itemize}
	\item $k = n+p-1$, if one of $n$ and $p$ is even and the other is odd and $n \geq p+1$.
	\item $k= n+p-2$, if both $n$ and $p$ are odd and $n \geq p+2$.
	\item $k= n+p - 2$, if both $n$ and $p$ are even and $n \geq p$.
\end{itemize}

These conjectures reflect our results in the cases $p=3$ and $p=4$.

\section{Asymptotic equivalence of varieties}

Classifying the subvarieties of a given variety can be a rather intricate task. A coarser approach to comparing varieties (or their corresponding T-ideals) is provided by the notion of asymptotic equivalence. We say that the varieties $\mathfrak{N}$ and $\mathfrak{M}$ are asymptotically equivalent if they satisfy the same proper polynomial identities of large enough degree. This notion was introduced by Kemer in \cite{Kemer1990}. Here we want to compare the varieties of Lie nilpotent associative algebras of different indexes using this notion of asymptotic equivalence.

\begin{prop}
    The varieties $\mathfrak{N}_2$ and $\mathfrak{N_3}$ are asymptotically equivalent.
\end{prop}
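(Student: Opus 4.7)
The strategy is to use Volichenko's description of the $S_m$-module $\Gamma_m(\mathfrak{N}_3)$ from Proposition \ref{Volichenko} to compare the two varieties at the level of proper multilinear polynomials. Since $\mathfrak{N}_2 \subseteq \mathfrak{N}_3$, for each $m$ there is a canonical surjection $\pi_m : \Gamma_m(\mathfrak{N}_3) \twoheadrightarrow \Gamma_m(\mathfrak{N}_2)$, and to establish asymptotic equivalence it is enough to show that $\pi_m$ is an isomorphism for all $m \geq M$ with $M$ some fixed integer; I will show that $M = 5$ works.

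For odd $m \geq 5$, Proposition \ref{Volichenko} immediately yields $\Gamma_m(\mathfrak{N}_3) = 0$, so $\pi_m$ is trivially an isomorphism. (In fact the target also vanishes directly: every proper multilinear polynomial reduces modulo $[x_1,x_2,x_3]$ to a product of $2$-commutators, and any such product has even degree.)

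For even $m \geq 6$, Proposition \ref{Volichenko} describes $\Gamma_m(\mathfrak{N}_3)$ as the cyclic $S_m$-submodule generated by the multilinearization of $f_{m,m}^{(5)} = s_m$. Since $s_m$ is already multilinear and antisymmetric, the $S_m$-orbit collapses to the one-dimensional sign representation spanned by $s_m$. Hence $\pi_m$ is an iso if and only if $\pi_m(s_m) \neq 0$, i.e., $s_m$ is not a polynomial identity for the infinite-dimensional Grassmann algebra $E$, which generates $\mathfrak{N}_2$. This is elementary: evaluating on the odd generators $e_1, \dots, e_m$ of $E$ gives
\[
s_m(e_1, \dots, e_m) \;=\; \sum_{\sigma\in S_m}(-1)^\sigma e_{\sigma(1)}\cdots e_{\sigma(m)} \;=\; m!\, e_1 e_2 \cdots e_m \;\neq\; 0,
\]
because each monomial $e_{\sigma(1)} \cdots e_{\sigma(m)}$ equals $(-1)^\sigma e_1 \cdots e_m$ and the two signs cancel.

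Combining both cases, $\pi_m$ is an isomorphism for every $m \geq 5$, which is precisely the statement that $\mathfrak{N}_2$ and $\mathfrak{N}_3$ satisfy the same proper multilinear identities of all degrees $\geq 5$, and hence by multilinearization in characteristic zero the same proper identities of sufficiently large degree. The only non-trivial input is Volichenko's classification, which collapses $\Gamma_m(\mathfrak{N}_3)$ to (at most) the sign representation generated by $s_m$ for $m \geq 5$; once that is available the argument has no obstacles and, in the even case, reduces to a one-line computation in $E$.
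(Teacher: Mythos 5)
Your proposal is correct and follows essentially the same route as the paper: both arguments reduce to Volichenko's description of $\Gamma_m(\mathfrak{N}_3)$ for $m\ge 5$ as at most the one-dimensional sign module spanned by $s_m$, and then use that $s_m$ is not an identity of the Grassmann algebra to conclude that no nonzero element of $\Gamma_m(\mathfrak{N}_3)$ dies in $\Gamma_m(\mathfrak{N}_2)$. The only difference is cosmetic (phrasing via the surjection $\pi_m$ rather than via a polynomial $f\in I_3$), plus your welcome explicit verification that $s_m(e_1,\dots,e_m)=m!\,e_1\cdots e_m\neq 0$, which the paper merely asserts.
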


\begin{proof}
    Let us prove that if $n\geq 5$ and $f\in I_3$ is a proper multilinear polynomial of degree $n$, then $f\in I_4$. To that end, let us denote the polynomial $f$ modulo $I_4$ by $\bar{f}$. Then we have $\bar{f}\in \Gamma_n(\mathfrak{N}_3)$. By Proposition \ref{Volichenko} and since $\deg f\geq 5$, either $\bar{f} = 0$ or $\bar{f}$ is a scalar multiple of $u_{n,n}$ modulo $I_4$. But $u_{n,n}$ is the standard polynomial of degree $n$, and we know that it is not an element of $I_3$. This means that $\bar{f} = 0$ and hence $f\in I_4$.
\end{proof}

\begin{prop}
    Let $k\geq 1$. Then $\mathfrak{N}_{2k-1}$ and $\mathfrak{N}_{2k}$ are not asymptotically equivalent.
\end{prop}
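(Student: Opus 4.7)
The plan is to exhibit a family of proper multilinear polynomials of arbitrarily large degree that lie in $\mathrm{Id}(\mathfrak{N}_{2k-1})=I_{2k}$ but not in $\mathrm{Id}(\mathfrak{N}_{2k})=I_{2k+1}$; this directly negates asymptotic equivalence. A natural candidate family is
\[
h_m=[x_1,x_2,\ldots,x_{2k}]\,[x_{2k+1},x_{2k+2}]\cdots[x_{2k+2m-1},x_{2k+2m}],\qquad m\ge 0,
\]
each of which is proper multilinear of degree $2k+2m$. The containment $h_m\in I_{2k}$ is immediate, since its leftmost factor already lies in $I_{2k}$.

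The main task is to show $h_m\notin I_{2k+1}$, and I plan to do this by evaluating $h_m$ in the algebra $A=E\otimes E_{2k-2}$, which by Proposition~\ref{prop_DeryabinaKrasilnikov} belongs to $\mathfrak{N}_{2k}$. For $k\ge 2$ I would take $x_1=e_1\otimes g_1$, $x_2=e_2\otimes 1$, $x_j=e_j\otimes g_{j-1}$ for $3\le j\le 2k-1$, $x_{2k}=e_{2k}\otimes 1$, and $x_{2k+\ell}=e_{2k+\ell}\otimes 1$ for $1\le\ell\le 2m$; the case $k=1$ reduces to the classical Grassmann computation $[x_1,x_2][x_3,x_4]\cdots$ in $E$. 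Each pair $[x_{2k+2j-1},x_{2k+2j}]$ evaluates to $2e_{2k+2j-1}e_{2k+2j}\otimes 1$, involving only fresh $E$-generators and the unit of $E_{2k-2}$.

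The heart of the argument is the inductive claim that, for this evaluation,
\[
[x_1,\ldots,x_{2k}]=\pm\,2^{2k-1}\,e_1e_2\cdots e_{2k}\otimes g_1g_2\cdots g_{2k-2},
\]
a nonzero element of $E\otimes E_{2k-2}$. The tool is the formula $[a\otimes b,\,c\otimes d]=(1-(-1)^{|a||c|+|b||d|})\,ac\otimes bd$ for $\mathbb{Z}/2$-homogeneous elements in a tensor product of Grassmann algebras. Tracking the $(e,g)$-bigrading of $c_i=[x_1,\ldots,x_i]$ through the sequence $(1,1),(2,1),(3,2),(4,3),\ldots,(2k-1,2k-2),(2k,2k-2)$, one checks directly that the parity exponent is odd at each step, so every bracket contributes a factor of $\pm 2$ and the iterated commutator does not collapse. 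Multiplying this nonzero element by the evaluated pairs then gives $\pm 2^{2k+m-1}\,e_1\cdots e_{2k+2m}\otimes g_1\cdots g_{2k-2}\neq 0$ in $A$.

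Hence each $h_m$ is a proper identity of $\mathfrak{N}_{2k-1}$ which fails on some algebra in $\mathfrak{N}_{2k}$, so $h_m\in I_{2k}\setminus I_{2k+1}$; since $\deg h_m=2k+2m$ is unbounded, the two varieties fail to share proper identities in arbitrarily high degrees and are therefore not asymptotically equivalent. The main obstacle is precisely the parity bookkeeping in the inductive computation: a uniform choice such as $x_i=e_i\otimes g_i$ for all $i$ would force the bigrading of $c_i$ to become even at some step and collapse the next bracket, so the two ``plain'' variables $x_2$ and $x_{2k}$ carrying $1\in E_{2k-2}$ are essential for keeping every successive commutator nonzero while using exactly the $2k-2$ generators available in $E_{2k-2}$.
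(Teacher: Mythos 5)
Your proposal is correct and follows essentially the same route as the paper: the same family $[x_1,\dots,x_{2k}][x_{2k+1},x_{2k+2}]\cdots$, the same reduction to a nonvanishing evaluation in $E\otimes E_{2k-2}$ via Proposition~\ref{prop_DeryabinaKrasilnikov}, and an equivalent substitution (the paper places the two ``$\otimes 1$'' entries at positions $1$ and $2k$ rather than $2$ and $2k$, which yields the same bigrading sequence and the same coefficient $2^{2k-1}$). Your explicit parity bookkeeping just fills in what the paper summarizes as a direct computation.
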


\begin{proof}
    Notice that $I_{2k} = \mathrm{Id}(\mathfrak{N}_{2k-1})$ and $I_{2k+1} = \mathrm{Id}(\mathfrak{N}_{2k})$. Of course $I_{2k+1}\subseteq I_{2k}$. We will show that there are proper multilinear polynomials of arbitrarily large degree that are in $I_{2k}$ but are not in $I_{2k+1}$.

    Let $u=[x_1,\dots,x_{2k}]$ and define the polynomial 
    \[f_m = u [x_{2k+1},x_{2k+2}]\cdots [x_{m-1},x_m].\]
    Since $u\in I_{2k}$, we have $f_m\in I_{2k}$. We will show that for any $m$, $f_m\not \in I_{2k+1}$.
    By Proposition \ref{prop_DeryabinaKrasilnikov}, $I_{2k+1}\subseteq \mathrm{Id}(E\otimes E_{2k-2})$.
    So it is enough to show that $f_m$ is not an identity for $E\otimes E_{2k-2}$.
    We consider the following evaluation:
    
    $x_i\mapsto e_i\otimes 1$ if $i=1$ or $i\geq 2k$.

    $x_i\mapsto e_i\otimes g_{i-1}$ for $i=2, \dots, 2k-1$.

    Under the above evaluation, we have 

    $u = 2^{2k-1}e_1\cdots e_{2k} \otimes g_1\cdots g_{2k-2} $

    and since $[e_{2k+1}\otimes 1,e_{2k+2}\otimes 1]\cdots [e_{m-1}\otimes 1,e_{m}\otimes 1] = 2^{\frac{m}{2}-k}e_{2k+1}\cdots e_m\otimes 1$, we obtain

    $f_m = 2^{k-1+\frac{m}{2}}e_1\cdots e_{m}\otimes g_1\cdots g_{2k-2}$, which is nonzero in $E\otimes E_{2k-2}$.
\end{proof}

It is interesting to know if an equivalence similar to $\mathfrak{N}_2$ and $\mathfrak{N}_3$ holds for higher values of $n$.

\begin{question}
    For $k\geq 2$, are $\mathfrak{N}_{2k}$ and $\mathfrak{N}_{2k+1}$ asymptotically equivalent?
\end{question}

\section{Funding}
The research of E.H. was supported, in part, by the Bulgarian Ministry of Education and Science, Scientific Programme ”Enhancing the Research Capacity in Mathematical Sciences (PIKOM)”, No. DO1-241/15.08.2023.
T.C.M. was supported, in part, by the São Paulo Research Foundation (FAPESP), Brazil, process number 2018/23690-6 and by CNPq -- Brazil, process number 405779/2023-2.

\end{document}